\newcommand\SU{\operatorname{SU}}
\newcommand\Spin{\operatorname{Spin}}
\newcommand\Aff{\operatorname{Aff}}
\newcommand\Stab{\operatorname{Stab}}
\newcommand\AdS{\operatorname{AdS}}
\title{Homogeneous pseudo-Riemannian structures of metrics of Kaluza-Klein type on the three-dimensional anti-de Sitter spacetime}
\author{Fumihiro Ueno}
\email{ueno-fumihiro-dx@alumni.osaka-u.ac.jp}
\date{}
\begin{document}
	
	\begin{abstract}
		We classify homogeneous pseudo-Riemannian structures of a three-parameter family of metrics called Kaluza-Klein type on the three-dimensional anti-de Sitter spacetime $(\AdS_3), $ which is diffeomorphic to $ \bS^1 \times \bR^2, $ with their induced groups of isometries and reductive decompositions.
		We also obtain the classification of homogeneous almost contact and paracontact metric structures of metrics of Kaluza-Klein type on $ \AdS_3 $ with their isometry groups and reductive decompositions. 
	\end{abstract}
		\maketitle
		\tableofcontents
	\section{Introduction}
	This research aims to investigate homogeneous pseudo-Riemannian structures, which are $ (1,2) $-type tensor fields corresponding to coset spaces, 
	of metrics called Kaluza-Klein type on the three-dimensional anti-de Sitter spacetime $ (\AdS_3) $ summarized in \pref{tb:metric-coset}. 
	First, we give some preliminaries of homogeneous pseudo-Riemannian structures and metrics of Kaluza-Klein type on $ \AdS_3 $.
	Second, we classify homogeneous pseudo-Riemannian structures of the metrics on $ \AdS_3 $ with the induced reductive decompositions and the isometry groups.
	Finally, we obtain a classification of homogeneous almost contact and paracontact metric structures of the metrics on $ \AdS_3 $ with the isometry groups and the reductive decompositions as summarized in \pref{tb:metric-HS-IG-HACM-HAPCM}.
	
	The following lists all the homogeneous pseudo-Riemannian structures obtained for all metrics $ g_{\lambda\mu\nu} = \diag(\lambda,\mu,\nu) $ of Kaluza-Klein type with respect to a left-invariant basis of Pauli-like matrices $ \{X_\alpha\}_{\alpha=0}^2 $ and the dual basis $ \{\theta^\alpha\}_{\alpha=0}^2 $ on $ \AdS_3 $. With the exception of $ S_0, $ each homogeneous pseudo-Riemannian structure is a one-parameter family parametrized by $ t \in \bR $, up to isomorphism with the induced isometry groups in \pref{tb:metric-coset}. 
	\begin{enumerate}\label{en:HS}
		\item $ S_0  = -(\lambda-\mu +\nu)\theta^2\otimes\theta^0\wedge\theta^1 + (\lambda + \mu + \nu)\theta^0\otimes\theta^1\wedge\theta^2 -(\lambda + \mu -\nu) \theta^1\otimes\theta^2\wedge\theta^0, $
		\item $ S_{\text{vol}}(t) =  t(\theta^0\wedge\theta^1\wedge\theta^2), (t \ge 0, t\neq-\lambda=\mu=\nu) $
		\item $ S_\lambda(t) = -\lambda\theta^2\otimes\theta^0\wedge\theta^1+t\theta^0\otimes\theta^1\wedge\theta^2  -\lambda \theta^1\otimes\theta^2\wedge\theta^0, (t\neq \lambda+2\mu, \mu=\nu) $
		\item $ S_\mu(t) = t\theta^2\otimes\theta^0\wedge\theta^1+\mu\theta^0\otimes\theta^1\wedge\theta^2 + \mu \theta^1\otimes\theta^2\wedge\theta^0, (t\neq 2\nu - \mu, -\lambda=\nu) $
		\item $ S_{\text{null}}^{\mp}(t) = \mu\theta^2\otimes \theta^0\wedge\theta^1 + (\mu \pm t)\theta^0\otimes \theta^1\wedge\theta^2 + t\theta^1\otimes\theta^1\wedge\theta^2  -t \theta^0\otimes\theta^2\wedge\theta^0 + (\mu \mp t) \theta^1\otimes \theta^2\wedge\theta^0. (t> 0, -\lambda=\mu=\nu) $
	\end{enumerate}
	
	\begin{table}[ht]	
		\centering
		\phantomsection
		\caption{The coset space representations and corresponding homogeneous pseudo-Riemannian structures (HS) in the above lists for each metric of Kaluza-Klein type on $\AdS_3$ }
		\label{tb:metric-coset}
		\begin{tabular}{|c|c|c|}
			\hline
			metric & coset space & HS \\
			\hline\hline
			\multirow{5}{*}{$-\lambda=\mu=\nu $} & $ \lb\SU(1,1)_{\text{L}}\times\SU(1,1)_{\text{R}}\rb/\SU(1,1)_{\diag}$ & $ S_{\text{vol}}(t) $ \\
			\cline{2-3}
			& $ \lb \SU(1,1)\times U(1) \rb/U(1) $ & $ S_{\lambda}(t) $ \\
			\cline{2-3}
			& $ \lb \SU(1,1)\times \bR \rb/\bR $ & $ S_{\mu}(t) $ \\
			\cline{2-3}
			& $ \lb \SU(1,1)\times \bR \rb/\bR $ & $ S_{\text{null}}^{\mp}(t) $  \\
			\cline{2-3}
			& $ (\SU(1,1)\times\{e\})/\{e\} $ & $ S_0 $ \\
			\hline
			\multirow{2}{*}{$-\lambda\neq\mu=\nu$} & $ \lb \SU(1,1)\times U(1) \rb /U(1)$ & $ S_{\lambda}(t) $  \\
			\cline{2-3}
			& $ (\SU(1,1)\times\{e\})/\{e\} $ & $ S_0 $ \\
			\hline
			\multirow{2}{*}{$-\lambda=\nu\neq\mu$} & $ \lb \SU(1,1)\times \bR\rb /\bR$ & $ S_{\mu}(t)(\simeq S_\nu(t)) $ \\
			\cline{2-3}
			& $ (\SU(1,1)\times\{e\})/\{e\} $ & $ S_0 $ \\
			\hline
			\multirow{2}{*}{$-\lambda=\mu\neq\nu$} & $ \lb \SU(1,1)\times \bR \rb/\bR$ & $ S_{\nu}(t)(\simeq S_\mu(t)) $ \\
			\cline{2-3}
			& $ (\SU(1,1)\times\{e\})/\{e\} $ & $ S_0 $ \\
			\hline
			$-\lambda\neq\mu,\mu\neq\nu,\nu\neq-\lambda$  & $ (\SU(1,1)\times\{e\})/\{e\} $ & $ S_0 $ \\
			\hline
		\end{tabular}
	\end{table}
	
		The following results are obtained from our investigation of \pref{tb:metric-coset}:
	\begin{itemize}
		\item Classification of homogeneous pseudo-Riemannian structures on
		\(\SU(1,1)\cong\AdS_3\), which is the three-dimensional Lorentzian symmetric space, in \pref{th:homogeneous-str-symmetric}.
		\item Determination of all isometry groups and induced reductive decompositions in \pref{sc:HS-classification} for each metric $g_{\lambda\mu\nu}$ of Kaluza-Klein type.
		\item Classifications of the induced homogeneous almost contact and paracontact metric structures given in \pref{sc:HACM} and \pref{sc:HApCM}.
	\end{itemize}
	
	The Berger sphere is known as a homogeneous Riemannian manifold diffeomorphic to the three-dimensional sphere ($ \bS^3 $). A metric on the Berger sphere is obtained by deforming the metric of the standard sphere along the fiber of the Hopf fibration $ \bS^1 \hookrightarrow \bS^3 \to \bS^2. $
	Homogeneous Riemannian structures on Berger spheres were studied in \cite{gadea2005homogeneous}.
	Just as the Berger metrics on $ \bS^3 $ appear as metrics of Kaluza-Klein type studied in \cite{calvaruso2013geometry}, one can similarly obtain Berger-like metrics to deform the standard metric on $ \AdS_3 $ as metrics of Kaluza-Klein type studied in \cite{calvaruso2014metrics}.
	
	There are many similarities between the geometry of $ \AdS_3 $ and $ \bS^3. $
	One such similarity is that both are maximally symmetric spaces with a six-dimensional space of Killing vector fields. 
	As for the differences, since $ \bS^3 $ is an isotropic Riemannian manifold, any choice of Killing vector field generated by the left (resp. right) action yields the same spacelike $ \bS^1 $-fibration known as the Hopf fibration. In contrast, $ \AdS_3 $ is an (anisotropic) Lorentzian manifold that admits both timelike $ \bS^1 $- and spacelike $ \bR $-fibrations obtained by the same construction.  
	These Hopf-like fibrations appear in the deformation of metrics that are homothetic to the standard metric on $ \AdS_3 $ if exactly two elements of $ \{-\lambda, \mu, \nu \} $ in a metric of Kaluza-Klein type $ g_{\lambda\mu\nu} $ coincide. If neither of them coincide, it implies that the stabilizer of the isometry group is trivial.
	We refer to these as the Hopf fibrations on $ \AdS_3, $ as explained in \pref{sc:Hopf-fibration}.
	
	Owning to Cartan's work, a locally symmetric Riemannian manifold is characterized as a parallel curvature tensor field of the Levi-Civita connection $ \nabla $.
	It was extended to the local homogeneity of a Riemannian manifold using a $ (1,2) $-type tensor field $ S $, which defines the canonical (Ambrose-Singer) connection $ \nabla - S $ satisfying the parallel conditions, called a homogeneous Riemannian structure in \cite{tricerri1983homogeneous}.
	In other words, a homogeneous Riemannian structure is a $ (1,2) $-type tensor field preserved by the action of the isometry group.
	In \cite{kirichenko1980homogeneous}, Kiri\v{c}henko generalized the results of Ambrose-Singer \cite{ambrose1958homogeneous} to the case when a homogeneous Riemannian manifold is equipped with geometric structures. 
	A homogeneous almost (para)contact metric structure, which consists of a homogeneous Riemannian structure and an almost (para)contact metric structure preserved by the action of the isometry group, is one of them.
	All these results are extended to the reductive homogeneous pseudo-Riemannian case of an arbitrary signature and summarized in \cite{calvaruso2019pseudo}.
	All homogeneous Riemannian manifolds are reductive, while there are homogeneous pseudo-Riemannian manifolds that are not reductive.
	
	Three-dimensional connected, simply connected, and complete homogeneous Lorentzian manifolds are classified by Calvaruso in \cite{calvaruso2007homogeneous} as well as the Riemannian cases in \cite{sekigawa1977some}. In particular, while the three-dimensional homogeneous Riemannian structures have been classified in \cite{sekigawa1977some}, \cite{abe1986classification}, \cite{calvino2023homogeneous}, \cite{ohno2023homogeneous}, and \cite{inoguchi2025homogeneous}, the three-dimensional homogeneous Lorentzian structures remain unclassified.
	Recently, homogeneous Lorentzian structures on non-symmetric three-dimensional Lie groups have been classified in \cite{Calvaruso:2025aa}. In this article, we classify homogeneous pseudo-Riemannian structures on metrics of Kaluza-Klein type on $\AdS_3 $, which can be regarded as diagonal left-invariant metrics on $ \SU(1,1), $  including both symmetric and non-symmetric cases.
	Although \cite{calvino2023homogeneous} and \cite{Calvaruso:2025aa} includes non-symmetric results of classification of homogeneous psuedo-Riemannian structures on Lie groups, we not only recover some of those but also determine their reductive decompositions and show that the corresponding isometry groups act transitively and (almost) effectively. 
	
	Homogeneous almost contact metric structures on the Berger spheres were also studied in \cite{gadea2005homogeneous}. 
	In this research, we obtain the list of homogeneous almost contact and paracontact metric structures of metrics of Kaluza-Klein type on $ \AdS_3. $
	There is a one-parameter family of homogeneous almost contact (resp. paracontact) metric structures induced by the isometry groups and its reductive decompositions on the standard $ \AdS_3, $ which is a Lorentzian Sasakian (resp. paraSasakian) manifold. If we deform the metric along a timelike (resp. spacelike) Hopf-fiber $ \bS^1$ (resp. $\bR), $ the obtained homogeneous almost contact (resp. paracontact) metric structures are defined on the $ \alpha $-Sasakian (resp. $\beta$-paraSasakian) manifolds.
	In \pref{tb:metric-HS-IG-HACM-HAPCM}, we obtain the list of homogeneous almost contact and paracontact metric structures, up to isomorphism, with the isometry groups that possess non-trivial stabilizers, using the notation in \pref{sc:HACM} and \pref{sc:HApCM}. 
	Homogeneous almost (para)contact metric structures and mixed metric 3-structures of Kaluza-Klein type metrics on $\AdS_3$ studied by Calvaruso and Perrone in \cite{calvaruso2014metrics} are summarized in \pref{sc:mixed metric 3-structure} with the trivial isometry groups and reductive decompositions.

	\begin{table}[ht]	
		\centering
		\phantomsection
		\caption{The classification of homogeneous almost (para)contact metric structures (HA(p)CM) with $ S\neq S_0 $ for each type of the Kaluza-Klein type metric on $\AdS_3$ with non-trivial isotropic subgroups of isometries, using notation in \pref{sc:HACM} and \pref{sc:HApCM}}
		\label{tb:metric-HS-IG-HACM-HAPCM}
		\begin{tabular}{|c|c|c|c|}
			\hline
			metric & isometry group & HACM & HApCM   \\
			\hline\hline
			\multirow{2}{*}{$-\lambda=\mu=\nu $} & $\SU(1,1)\times U(1)$ & $(S_\lambda(t),\phi_0,\xi_0,\eta_0)$ & None   \\
			\cline{2-4}
			& $\SU(1,1)\times \bR $ & None & $(S_\mu(t),\phitilde_1,\xi_1,\eta_1)$  \\
			\hline
			$-\lambda\neq\mu=\nu$ & $\SU(1,1)\times U(1)$ & $ (S_\lambda(t),\phi_0,\xi_0,\eta_0) $ & None  \\
			\hline
			$-\lambda=\nu\neq\mu$	& $\SU(1,1)\times \bR $ & None & $ (S_\mu(t),\phitilde_1,\xi_1,\eta_1) $ \\
			\hline
		\end{tabular}
	\end{table}

	\section{Preliminaries}
	\subsection{Homogeneous pseudo-Riemannian structures}
	Let $ (M,g) $ be a connected pseudo-Riemannian manifold. Let $ \nabla $ be the Levi-Civita connection of $ (M,g), $ and $ R $ be the curvature tensor field that we use the following conventions:
	\begin{equation}
		R(X,Y)Z = \nabla_X\nabla_YZ-\nabla_Y\nabla_XZ-\nabla_{[X,Y]}Z,\quad R(X,Y,Z,W) = g(R(X,Y)Z,W),
	\end{equation}
	for all vector fields $ X,Y,Z,W \in \Gamma(TM). $
	\begin{definition}\label{df:homogenous pseudo-Riemannian manifold}
		A pseudo-Riemannian manifold $ (M,g) $ is called homogeneous if there exists a Lie group of isometries that acts transitively on $ M $.
	\end{definition}
	\begin{definition}\label{df:reductive}
		Let $ G $ be a Lie group, and $ H \subset G $ be a Lie subgroup of $ G. $
		A homogeneous space $ G/H $ is reductive if the Lie algebra $ \frakg $ of $ G $ can be decomposed as $ \frakg = \frakh\oplus \frakm, $ where $ \frakh $ is the Lie algebra of $ H, $ and $ \frakm $ is an $ \Ad|_{H} $-invariant subspace, that is, $ \Ad_h(\frakm)\subset \frakm $ for an arbitrary $ h \in H. $
	\end{definition} 
	\begin{definition}\label{df:homogeneous-structure}
		A homogeneous pseudo-Riemannian structure on $ (M,g) $ is a $ (1,2) $-type tensor field $ S $ such that the canonical connection $ \widetilde\nabla = \nabla -S $ satisfies the following equations:
		\begin{equation}\label{eq:Ambrose-Singer connection}
			\widetilde\nabla g = 0, \quad \widetilde\nabla R = 0, \quad \widetilde\nabla S = 0.
		\end{equation}
		We also denote by $ S $ the associated tensor field of type $ (0,3) $ on $ (M,g) $ defined by 
		 \begin{equation}\label{eq:homogeneous structure tensor}
		 	S(X,Y,Z) = g(S_XY, Z).(X,Y,Z\in \Gamma(TM)) 
		 \end{equation}
		
	\end{definition}
	In particular, Ambrose and Singer gave a characterization of homogeneity in \cite{ambrose1958homogeneous} the Riemannian case, and later generalized in \cite{gadea1997reductive} to the metrics of an arbitrary signature. 
	The characterization is given as follows. A connected, simply connected, and complete reductive homogeneous pseudo-Riemannian manifold is a pseudo-Riemannian manifold admitting a linear connection $ \widetilde\nabla = \nabla - S$ satisfying \eqref{eq:Ambrose-Singer connection}. 
	Let $ (M,g) $ be a reductive homogeneous pseudo-Riemannian manifold, that is $ M=G/H $, where $ G $ is a connected Lie subgroup of isometry of $ (M,g), $ and $ H $ is the isotropy group at a point $ o \in M. $
	The $ \Ad|_{H} $-invariant subspace $ \frakm \subset \frakg $ of the reductive decomposition $ \frakg=\frakh \oplus\frakm $ is identified with $ T_oM $ through the isomorphism
	\begin{equation}
		\tau \colon \frakm \ni X \mapsto X^*|_o \in T_oM, 
	\end{equation}
	where $ X^* $ is the Killing vector field generated by the one-parameter subgroup $ \{\exp(sX)\} $ of $ G $ acting on $ M $ from the left.
	Then the canonical connection $ \widetilde\nabla $ of the reductive homogeneous pseudo-Riemannian manifold $ (M=G/H,g) $ is determined by its value at $ o, $ by 
	\begin{equation}\label{eq:nabla-Lie}
		\widetilde\nabla_{X^*}Y^*|_o = -[X,Y]^*_{\frakm}|_o, \quad X,Y \in \frakm,
	\end{equation} 
	and $ S=\nabla-\widetilde\nabla $ is the homogeneous pseudo-Riemannian structure induced by the reductive decomposition $ \frakg =\frakh \oplus \frakm. $
	
	Conversely, assume that a $ (1,2) $-type tensor field $ S $ on $ (M,g) $, which is a connected, simply connected, and complete pseudo-Riemannian manifold, satisfies the condition \eqref{eq:Ambrose-Singer connection} with respect to the canonical connection $ \widetilde\nabla=\nabla-S. $
	Fix a point $ o\in M, $ and set $ \frakm = T_oM. $
	If $ \Rtilde $ denotes the curvature tensor of the connection $ \widetilde\nabla, $ then the holonomy algebra $ \frakh $ of $ \widetilde\nabla $ is generated by $\Rtilde(X,Y) $ for all $ X,Y \in \frakm. $
	Now we define a Lie algebra structure on the direct sum $ \frakg = \frakh \oplus \frakm $ by
	\begin{align}
		[U,V] & = UV-VU, \\
		[U,X] & = U(X),\\
		[X,Y] & = S_XY-S_YX -\Rtilde(X,Y),
	\end{align}
	for all $ X,Y \in \frakm, $ and $ U,V \in \frakh. $
	Let $ \Gtilde $ be a connected and simply connected Lie group whose Lie algebra is $ \frakg, $ and $ \Htilde $ be a connected Lie subgroup of $ \Gtilde $ whose Lie algebra is $ \frakh. $ Then $ \Gtilde $ acts transitively and almost effectively as a group of isometries on $ (M,g), $ hence $ M $ is diffeomorphic to $ \Gtilde/\Htilde. $
	We can choose the discrete normal subgroup $ \Gamma $ of $ \Gtilde $ that acts trivially on $ M $, then $ G=\Gtilde/\Gamma $ acts transitively and effectively on $ M $ as a group of isometries with the isotropy group $ H=\Htilde/\Gamma. $
	Therefore, $ (M,g) $ is a reductive homogeneous pseudo-Riemannian manifold diffeomorphic to $ G/H. $
	
	\begin{definition}\label{df:isom-HS}
		Let $ (M,g) $ be a reductive homogeneous pseudo-Riemannian manifold, and let $ S $ and $ S' $ be two homogeneous pseudo-Riemannian structures on $ (M,g). $ 
		The two homogeneous structures are said to be isomorphic if there exists an isometry $ \varphi $ on $ (M,g) $
		such that $ \varphi_*(S_X Y) = S'_{\varphi_*X}{\varphi_* Y} $ for $ X,Y \in \Gamma(TM) $ or $ \varphi^*S' = S $ as a $ (0,3) $-type tensor field.
	\end{definition}   
	
	\begin{theorem}\label{th:isom-HS}\cite{tricerri1983homogeneous}
		Let $ (M,g) $ be a reductive homogeneous pseudo-Riemannian manifold, and let $ G $ and $ G' $ be connected Lie subgroups of its isometry group that act transitively on $ M. $
		 Assume that the Lie algebra $\frakg $(resp. $ \frakg' $) of $ G $(resp. $ G' $) has a reductive decompostion $ \frakg = \frakm\oplus \frakh $ (resp.  $ \frakg' = \frakm'\oplus\frakh' $).
		 The homogeneous structures $ S $ and $ S' $ defined from these isometric actions are isomorphic if and only if there exists a Lie algebra isomorphism $ \psi $ from $ \frakg $ to
		 $ \frakg' $ satisfying the following conditions:
		 \begin{enumerate}
		 	\item $\psi(\frakm)=\frakm' $ and $ \psi(\frakh) = \frakh', $
		 	\item $ \psi|_\frakm $ induces an isometry from $ (T_oM, g_o) $ to $ (T_{o'}M, g_{o'}) $ via the maps $ \tau \colon \frakm \ni X \mapsto X^*|_o \in T_oM, $ and $ \tau' \colon \frakm' \ni X' \mapsto X'^*|_{o'} \in T_{o'}M. $
		 \end{enumerate}
	\end{theorem}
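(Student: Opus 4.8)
The plan is to move the whole problem to the infinitesimal data attached to each reductive decomposition, namely the triple $(\frakm\cong T_oM,\,g_o,\,S_o)$ together with the curvature $\Rtilde_o$ of the canonical connection, and then to invoke the Ambrose--Singer reconstruction recalled above \cite{ambrose1958homogeneous,gadea1997reductive}, under the standing assumption that $(M,g)$ is connected, simply connected and complete. Two preliminary remarks drive both implications. First, $S=\nabla-\widetilde\nabla$ is $G$-invariant: every isometry preserves the Levi-Civita connection $\nabla$, and $\widetilde\nabla$ is $G$-invariant by \eqref{eq:nabla-Lie}, so $g^{*}S=S$ for all $g\in G$ (and likewise for $S'$ under $G'$). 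Second, any isometry $\varphi$ with $\varphi^{*}S'=S$ automatically intertwines the canonical connections, $\varphi_{*}\widetilde\nabla_{X}Y=\widetilde\nabla'_{\varphi_{*}X}\varphi_{*}Y$, because $\varphi_{*}\nabla_{X}Y=\nabla_{\varphi_{*}X}\varphi_{*}Y$ and $\varphi_{*}S_{X}Y=S'_{\varphi_{*}X}\varphi_{*}Y$; hence it also intertwines their curvatures $\Rtilde$ and $\Rtilde'$.

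For the forward implication, suppose $S$ and $S'$ are isomorphic via an isometry $\varphi$ with $\varphi^{*}S'=S$. First I would normalise the base point: choosing $g'\in G'$ with $g'\cdot\varphi(o)=o'$ and setting $\Phi=g'\circ\varphi$, the $G'$-invariance of $S'$ gives $\Phi^{*}S'=S$ while now $\Phi(o)=o'$. The differential $F:=\Phi_{*o}\colon T_oM\to T_{o'}M$ is then a linear isometry intertwining $S_o$ with $S'_{o'}$ and, by the second remark, $\Rtilde_o$ with $\Rtilde'_{o'}$. I would set $\psi|_{\frakm}=(\tau')^{-1}\circ F\circ\tau$, so that condition~(2) holds by construction, and define $\psi|_{\frakh}$ by conjugation of the isotropy representation, $\psi(U)=F\circ U\circ F^{-1}$. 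Since $\frakh$ is generated, under this representation, by the curvature operators $\Rtilde(X,Y)$ and $F$ carries $\Rtilde(X,Y)$ to $\Rtilde'(FX,FY)$, conjugation maps $\frakh$ isomorphically onto $\frakh'$, giving condition~(1). It then remains to check that $\psi=\psi|_{\frakh}\oplus\psi|_{\frakm}$ respects the three brackets of the reconstructed Lie algebra: $[U,V]=UV-VU$ is preserved because conjugation is an algebra map, $[U,X]=U(X)$ because $FUF^{-1}(FX)=F(U(X))$, and $[X,Y]=S_{X}Y-S_{Y}X-\Rtilde(X,Y)$ because $F$ simultaneously intertwines $S$ and $\Rtilde$. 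Thus $\psi$ is the desired Lie algebra isomorphism.

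For the converse, suppose a Lie algebra isomorphism $\psi$ with (1) and (2) is given, and put $F:=\tau'\circ\psi|_{\frakm}\circ\tau^{-1}\colon T_oM\to T_{o'}M$, a linear isometry by~(2). Comparing the $\frakm'$- and $\frakh'$-components of $\psi[X,Y]=[\psi X,\psi Y]$ shows that $F$ intertwines the torsion of $\widetilde\nabla$ (the antisymmetric part $S_{X}Y-S_{Y}X$) with that of $\widetilde\nabla'$, and the curvature $\Rtilde_o$ with $\Rtilde'_{o'}$. A priori this only controls the antisymmetric part of $S$; but $S$ is exactly the contorsion tensor of the metric connection $\widetilde\nabla$, hence an algebraic function of $g$ and the torsion alone, so $F$ in fact intertwines the full tensor $S_o$ with $S'_{o'}$. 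Since $\widetilde\nabla S=0$, $\widetilde\nabla\Rtilde=0$, and $(M,g)$ is connected, simply connected and complete, the reconstruction theorem extends $F$ to an isometry $\varphi\colon M\to M$ with $\varphi(o)=o'$, $\varphi_{*o}=F$, intertwining the canonical connections. Concretely, I would produce $\varphi$ by integrating $\psi$ to an isomorphism of the universal covering groups, descending it via $\psi(\frakh)=\frakh'$ to the homogeneous quotients, and identifying both quotients with $M$. Being a metric isometry preserving $\widetilde\nabla$, $\varphi$ satisfies $\varphi^{*}S'=\varphi^{*}(\nabla-\widetilde\nabla')=\nabla-\widetilde\nabla=S$, so $S$ and $S'$ are isomorphic.

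The hard part will be the global step in the converse, namely passing from the algebraic isomorphism $\psi$ to an honest isometry $\varphi$ of $M$: this is where connectedness, simple connectedness and completeness are indispensable, and where one must take care to match the covering groups $\Gtilde/\Htilde$ and $\Gtilde'/\Htilde'$ with $M$ before descending $\psi$. A secondary point, in the forward direction, is verifying that conjugation by $F$ lands exactly on $\frakh'$ and not merely inside the larger stabiliser of $(S'_{o'},\Rtilde'_{o'})$; this uses that $\frakh$ is the holonomy algebra generated by the parallel operators $\Rtilde(X,Y)$, which $F$ preserves.
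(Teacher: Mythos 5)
The paper does not actually prove this statement: it is quoted from Tricerri--Vanhecke \cite{tricerri1983homogeneous} and used as a black box, so there is no in-paper argument to compare yours against. Your reconstruction is the standard one and is essentially sound: the forward direction via the differential $F=\Phi_{*o}$ of a base-point-normalised isometry, the converse via the observation that a metric connection is determined algebraically by $g$ and its torsion (so intertwining the antisymmetric part of $S$ already forces intertwining of all of $S$), followed by the Ambrose--Singer/Cartan-type global reconstruction under the connected, simply connected, complete hypotheses. Both halves are organised correctly, and your identification of the two delicate points (landing exactly on $\frakh'$, and the global integration of $\psi$) is accurate.

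The one substantive caveat is the issue you yourself flag at the end, and it deserves to be stated as a hypothesis rather than an afterthought: the ``only if'' direction is simply false for arbitrary transitive connected subgroups $G$, $G'$ with arbitrary reductive decompositions, which is what the theorem as printed allows. For instance, on flat $\bR^n$ the translation group (with $\frakh=\{0\}$) and the full connected isometry group (with $\frakh'\cong\mathfrak{so}(n)$) both induce the same structure $S=S'=0$, yet $\frakg\not\cong\frakg'$, so no isomorphism $\psi$ can exist. Your proof of the forward implication genuinely needs $\frakh$ and $\frakh'$ to be the holonomy algebras of the canonical connections, i.e.\ generated by the operators $\Rtilde(X,Y)$ as in the reconstruction recalled before \pref{df:isom-HS}; only then does conjugation by $F$ carry $\frakh$ \emph{onto} $\frakh'$ rather than merely into the stabiliser of $(S'_{o'},\Rtilde'_{o'})$. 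This is a defect of the statement as transcribed in the paper (Tricerri--Vanhecke prove it for the canonically associated groups), not of your argument, but a complete write-up should either add that hypothesis or restrict to the groups produced by the Ambrose--Singer construction. The remaining gap is the global step in the converse, which you only sketch; it is standard, but one must check that $\psi$ integrates to a map of the universal covering groups sending the full (possibly disconnected) isotropy subgroup into the other, so that the induced map on coset spaces is well defined before one identifies both with $M$.
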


	
	\subsection{Pseudo-Riemannian g-natural metrics on unit sphere bundles}
	Let $ (M,g) $ be an $ m $-dimensional Riemannian manifold.
	We shall see the Ehresmann connection on the tangent bundle as in \cite{sakai1996riemannian}.
	Let $(\scU, x) $ be a local coordinate neighborhood of $ p\in M, $ and $ (T\scU, (x, \xi))$ be a local coordinate neighborhood of $ u \in T_pM $.
	There are two vector bundle structures on $ TM $ with the following local expressions:
	\begin{align}
		\pi_{TTM} \colon T_uTM \ni \left.X^i\dfrac{\partial}{\partial x^i}\right|_p + \left.U^i\frac{\partial}{\partial \xi^i}\right|_u & \mapsto \left.\xi(u)^i\frac{\partial}{\partial x^i}\right|_p =u \in T_pM,\label{eq:TTM-to-TM as pro} \\
		d\pi_{TM} \colon T_uTM \ni \left.X^i\dfrac{\partial}{\partial x^i}\right|_p + \left.U^i\frac{\partial}{\partial \xi^i}\right|_u & \mapsto \left.X^i \frac{\partial}{\partial x^i}\right|_p \in T_pM, \label{eq:dpi}
	\end{align}
	where $ X , U \in \bR^m. $
	\begin{definition}\label{df:Ehresmann-connection}
		The Ehresmann connection of the tangent bundle $ TM $ is the following direct sum decomposition:
		\begin{equation}
			TTM=\scH \oplus \scV,
		\end{equation} 
		where $ \scV  $ is the kernel of $ d\pi_{TM}\colon TTM \to TM. $
		We call $ \scV($resp.$\scH) $ a vertical (resp.horizontal) subbundle  of $ TTM $.
	\end{definition}
	Since $ T_pM \subset TM $ is an $ m $-dimensional submanifold and $ T_uT_pM \subset T_uTM $ is an $ m $-dimensional subspace, $ \scV_u = \ker(d\pi_{TM}|_u \colon T_uTM \to TM) = T_uT_pM. $
	The vertical space is isomorphic to the tangent space of the base manifold at the same point via the following map.
	\begin{equation}\label{eq:vertical-identity}
		\iota_u \colon \scV_u=T_uT_pM \ni U^i\frac{\partial}{\partial \xi^i} \mapsto U^i\frac{\partial}{\partial x^i} \in T_pM.
	\end{equation}
	Now we shall define the connection map $K \colon TTM \to TM $ that is equivalent to the horizontal subbundle of $ TTM $.
	
	\begin{definition}\label{df:connection-map}
		We call $K \colon TTM \to TM$ a connection map if it satisfies the following conditions:
		\begin{enumerate}
			\item The map $ K $ is a bundle homomorphism with two bundle map structures, both between 
			$ TTM $ and $ TM $, as described by \eqref{eq:TTM-to-TM as pro} and \eqref{eq:dpi}, rendering the following diagrams commutative:
			\def\objectstyle{\scriptstyle}
			\def\labelstyle{\scriptscriptstyle}
			\xymatrix{
				TTM \ar[r]^{K} \ar[d]^{\pi_{TTM}} & TM \ar[d]^{\pi_{TM}} \\
				TM \ar[r]^{\pi_{TM}} & M &
			}\label{dg:a}
			\def\objectstyle{\scriptstyle}
			\def\labelstyle{\scriptscriptstyle}
			\xymatrix{
				TTM \ar[r]^{K} \ar[d]^{d\pi_{TM}} & TM \ar[d]^{\pi_{TM}} \\
				TM \ar[r]^{\pi_{TM}} & M &
			}\label{dg:b}
			
			\item The restriction of $ K $ to the vertical subspace is equal to the  isomorphism \eqref{eq:vertical-identity}, i.e.,  \[ K|_{T_uT_pM} = \iota_u \colon T_uT_pM \to T_pM. \]
		\end{enumerate}
		
		The horizontal subbundle is defined for each $u \in T_pM$ by $\scH_u = \ker(K_u \colon T_uTM \to T_pM)$. In this case, the connection $\nabla$ on $TM$ is given by
		\begin{equation}\label{eq:K-nabla}
			K(dX|_u) = \nabla_uX|_p.
		\end{equation}
	\end{definition}
	If a connection $ \nabla $ with the connection form $ \{\omega^i_j \} $ of the local frame $ \{\frac{\partial}{\partial x^i}\}_{i=1}^{m} $ is defined,  the connection map satisfying \eqref{eq:K-nabla} is locally given by
	\begin{equation}
		K (\wtilde) = \lb b^i + \omega^i_{jk}(p)a^j\xi^k(u) \rb\left.\frac{\partial}{\partial x^i}\right|_p. \lb\wtilde = a^i\left.\frac{\partial}{\partial x^i}\right|_p + b^j\left.\frac{\partial}{\partial \xi^j}\right|_u \in T_uT\scU\rb
	\end{equation} 
	It implies that the horizontal distribution $ \{\scH_u\} $ defined by the connection $ \nabla $ is locally given by
	\begin{equation}\label{eq:horizontal-distribution}
		\scH_u = \lc  X^i\left.\frac{\partial}{\partial x^i}\right|_p - \omega^i_{jk}(p) X^j\xi^k(u) \left.\frac{\partial}{\partial \xi^i }\right|_u \in T_uTM \mid X\in \bR^m \rc.	
	\end{equation}
	
	\begin{definition}\label{df:TTM-lifting}
		Let $ u \in T_pM $ be a fixed tangent vector at $ p \in M. $ If we have a connection defined by a connection map $ K, $ the following horizontal and vertical liftings are obtained.
		\begin{enumerate}
			\item  The horizontal lift $ w^h \in T_uTM $ of $ w\in T_pM $ with respect to $ u \in T_pM $ satisfies
			\begin{equation}
				w^h = w -\iota_u^{-1}K(w).
			\end{equation} 
			\item  The vertical lift $ w^v \in T_uTM $ of $ w\in T_pM $ with respect to $ u \in T_pM $ satisfies
			\begin{equation}
				w^v = \iota_u^{-1} K(w).
			\end{equation} 
		\end{enumerate}
	\end{definition}
	Here, we identify the tangent vector $ w\in T_pM $ with a vector in $ T_uTM. $
	
	$ g $-natural metrics form a wide family of metrics on $ TM, $ which depends on six smooth functions from $ \bR^+ $ to $ \bR, $ has been completely described in \cite{abbassi2005some}.
	We apply the notion of a $ g $-natural metric to the unit sphere bundle $ T^1M = \lc (p,u) \in TM \mid g(u,u) = 1 \rc $, by restricting the $ g $-natural metric on $ TM, $ as described in \cite{abbassi2010naturality}. 
	The tangent space of the unit sphere bundle $ T^1M $ is given by
	\begin{equation}
		T_{(p,u) }(T^1M) = \lc w^h + w'^v \mid w \in T_pM, w' \in \{u\}^\perp \subset T_pM \rc.
	\end{equation} 
	Then a $ g $-natural metric $ G $ on $ T^1M $ is defined as follows:
	\begin{align}
		G_u(w_1^h,w_2^h) & = (a+c)g(w_1,w_2) +d \ g(w_1,u)g(w_2,u),\notag\\
		G_u(w_1^h,{w'_2}^v) & = bg(w_1,w'_2),\label{eq:g-natural-unit-sphere}\\
		G_u({w'_1}^v,{w'_2}^v) & = ag(w'_1,w'_2),\notag
	\end{align}
	where $ w_1,w_2 \in T_pM,  w'_1,w'_2 \in \{u\}^\perp , $ and $a,b,c,d \in \bR. $
	Therefore, $ G $ is 
	\begin{enumerate}
		\item non-degenerate if and only if $ a(a+c) - b^2 \neq 0 $ and $ a+c+d\neq 0, $
		\item Riemannian if and only if $ a(a+c) - b^2 > 0, a>0 $ and $ a+c+d > 0. $
	\end{enumerate}
	We call $ G $ a Kaluza-Klein type metric on a unit sphere bundle if $ b=0, $ and $ a(a+c)(a+c+d) \neq 0. $ Moreover, Kaluza-Klein metrics on a unit sphere bundle form a two-parameter family of $ g $-natural metrics for $ b=d=0, $ and $ a(a+c) \neq 0. $  
	
	\subsection{Metrics of Kaluza-Klein type on $ \AdS_3 $}\label{sc:Metric of Kaluza-Klein type}
	Let $ \bR^{2,2} $ denote the four-dimensional pseudo-Euclidean space equipped with the standard metric $ g_0 = -(dt^1)^2 -(dt^2)^2 +(dx^1)^2 + (dx^2)^2. $  
	A hypersurface $ \iota \colon \bH^3_1(\kappa) \hookrightarrow \bR^{2,2} $ endowed with the induced metric $ g=\iota^*g_0 $ is a Lorentzian manifold of negative constant curvature, known as the three-dimensional anti-de Sitter spacetime $ (\AdS_3). $ In other words,
	\begin{equation}
		\bH^3_1(\kappa) =\lc (t^1,t^2,x^1,x^2) \in \bR^{2,2}\mid -|t|^2+|x|^2 = -\frac{1}{\kappa}(\kappa>0)\rc.  
	\end{equation}
	
	The three-demensional anti-de Sitter spacetime is diffeomorphic to $ \bS^1\times \bR^2. $
	We describe a covering map from $ \bH^3_1(\kappa/4) $ to $ T^1\bH^2(\kappa) \simeq \bS^1\times \bR^2 $ using the following complex coordinates:
	\begin{equation}
		\bH^3_1 \coloneqq \bH^3_1(1) = \{ (z^1,z^2) \in \bC^2 \mid |z^1|^2 - |z^2|^2 =1 \}.
	\end{equation}
	We identify $ \bH^3_1 $ with the Lie group $ \SU(1,1) $ via the map:
	\begin{equation}\label{eq:isom-H^3_1-SU(1,1)}
		\bH^3_1 \ni (z^1,z^2) \mapsto \begin{pmatrix} z^1 & z^2 \\ \overline{z}^2 & \overline{z}^1 \end{pmatrix}\in \SU(1,1).
	\end{equation}
	The Lie group $ \SU(1,1), $ which is identified with $ \SL(2,\bR), $ is diffeomorphic to the non-Riemannian three-dimensional spin group $ \Spin(1,2). $
	The double cover $ \Spin(1,2)\simeq \SU(1,1) \ni a \mapsto \Ad_{a} \in \SO_0(1,2) $ is given by
	\begin{equation}
		a = \begin{pmatrix} z^1 & z^2 \\ \overline{z}^2 & \overline{z}^1 \end{pmatrix} \mapsto \begin{pmatrix}|z^1|^2 + |z^2|^2 & 2\Re( -\sqrt{-1}\overline{z}^1z^2)  & 2 \Im(-\sqrt{-1}\overline{z}^1z^2) \\ -2\Re( \sqrt{-1}z^1z^2)& \Re((z^1)^2-(z^2)^2) & \Im((\zbar^1)^2+(\zbar^2)^2)\\ -2\Im(\sqrt{-1}z^1z^2) & \Im((z^1)^2-(z^2)^2) & \Re((\zbar^1)^2+(\zbar^2)^2) \end{pmatrix} \eqqcolon A_{(z^1,z^2)}.
	\end{equation}	
	The matrix representation of the double cover is given with respect to the following orthonormal basis of Pauli-like matrices:
	\begin{equation} \label{eq:su(1,1)-basis}
		X_0 = \begin{pmatrix} \sqrt{-1} & 0 \\ 0 & -\sqrt{-1} \end{pmatrix}, X_1 = \begin{pmatrix} 0 & 1 \\ 1 & 0 \end{pmatrix}, X_2 = \begin{pmatrix} 0 & \sqrt{-1} \\ -\sqrt{-1} & 0 \end{pmatrix} \subset \mathfrak{su}(1,1),
	\end{equation}
	which is identified with the left-invariant vector fields on $ \SU(1,1) \simeq \bH^3_1. $
	Then the orthonormal frame on $ (\bH^3_1(\kappa/4), \iota^*g_0) $ is $ \lc \frac{\sqrt{\kappa}}{2}X_\alpha \rc_{\alpha=0}^2. $
	The left-invariant vector fields of $\{X_\alpha\}_{\alpha=0}^2 $ are given by 
	\begin{align}\label{eq:infinitesimal-action}
		X_0|_{(z^1,z^2)}  & = \sqrt{-1}z^1\frac{\partial}{\partial z^1} - \sqrt{-1}\overline{z}^1\frac{\partial}{\partial \overline{z}^1} - \sqrt{-1}z^2\frac{\partial}{\partial z^2} + \sqrt{-1}\overline{z}^2\frac{\partial}{\partial \overline{z}^2}\notag\\
		& = -t^2\frac{\partial}{\partial t^1} + t^1\frac{\partial}{\partial t^2} + x^2\frac{\partial}{\partial x^1} - x^1\frac{\partial}{\partial x^2}, \notag\\
		X_1|_{(z^1,z^2)} & = z^2 \frac{\partial}{\partial z^1} + \zbar^2 \frac{\partial}{\partial \zbar^1} + z^1 \frac{\partial}{\partial z^2} + \zbar^1 \frac{\partial}{\partial\overline{z}^2}\\
		& = x^1\frac{\partial}{\partial t^1} + x^2\frac{\partial}{\partial t^2} + t^1\frac{\partial}{\partial x^1} + t^2\frac{\partial}{\partial x^2}, \notag\\
		X_2|_{(z^1,z^2)}  & = -\sqrt{-1}z^2\frac{\partial}{\partial z^1} + \sqrt{-1}\zbar^2\frac{\partial}{\partial \overline{z}^1} +\sqrt{-1}z^1\frac{\partial}{\partial z^2} - \sqrt{-1}\zbar^1\frac{\partial}{\partial \overline{z}^2}\notag\\
		& = x^2\frac{\partial}{\partial t^1} - x^1\frac{\partial}{\partial t^2} - t^2\frac{\partial}{\partial x^1} +t^1\frac{\partial}{\partial x^2}.\notag
	\end{align}
	
	On the other hand, the Riemannian hyperbolic two-space $ \bH^2(\kappa) $ is embedded in the pseudo-Euclidean space $ \bR^{1,2} $ as follows:
	\begin{equation}
		\bH^2(\kappa)=\lc (t,y^1,y^2) \in\bR^{1,2} \mid -t^2+(y^1)^2 + (y^2)^2 = -\frac{1}{\kappa}, t>0\rc.
	\end{equation}
	We define the map $ F \colon \bH^3_1(\kappa/4) \to T^1\bH^2(\kappa) $
	as the composition of the following maps:
	\begin{align}
		& \bH^3_1(\kappa/4) \ni \frac{2}{\sqrt{\kappa}}(z^1,z^2)  \mapsto (z^1,z^2) \in \bH^3_1, \\
		& \bH^3_1 \ni (z^1,z^2)  \mapsto \begin{pmatrix}z^1 & z^2 \\ \overline{z}^2 & \overline{z}^1	\end{pmatrix} \in \SU(1,1), \\
		& \SU(1,1) \ni \begin{pmatrix}z^1 & z^2 \\ \overline{z}^2 & \overline{z}^1	\end{pmatrix} \mapsto A_{(z^1,z^2)} = (c_0,c_1,c_2) \in \SO_0(1,2) , \\
		& \SO_0(1,2) \ni (c_0,c_1,c_2) \mapsto \lb \frac{1}{\sqrt{\kappa}}c_0, c_1 \rb \in T_1\bH^2(\kappa).
	\end{align}	
	Therefore, the following composition of the maps is an isomorphism.
	\begin{equation}\label{eq:Hopf-map}
		F \colon \bH^3_1(\kappa/4) \ni \frac{2}{\sqrt{\kappa}}(z^1,z^2)  \mapsto \lb\frac{1}{\sqrt{\kappa}}(|z^1|^2+|z^2|^2, -2\sqrt{-1}z^1z^2), (2\Im(\zbar^1z^2), (z^1)^2-(z^2)^2) \rb \in T^1 \bH^2(\kappa).
	\end{equation}
	Since $ \pi \circ F \colon \bH^3_1(\kappa/4) \ni \frac{2}{\sqrt{\kappa}}(z^1,z^2)  \mapsto \frac{1}{\sqrt{\kappa}}(|z^1|^2+|z^2|^2, -2\sqrt{-1}z^1z^2 ) \in \bH^2(\kappa) $ gives a pseudo-Riemannian submersion called a timelike Hopf fibration described in \pref{sc:Hopf-fibration}, we introduce the timelike Hopf coordinates:
	\begin{align}
		z^1 & = \sqrt{-1}\exp\lb \frac{\sqrt{-1}(\varphi+\tau)}{2}\rb\cosh \lb \frac{\chi}{2}\rb, \label{eq:timelike-Hopf coordinate1} \\
		z^2 & = \exp\lb \frac{\sqrt{-1}(\varphi-\tau)}{2}\rb\sinh \lb \frac{\chi}{2}\rb.\label{eq:timelike-Hopf coordinate2}
	\end{align}
	Let $ \{e_\alpha\}_{\alpha=0}^2 $ be an orthonormal frame of this coordinates on $ \bH^3_1(\kappa/4) $ that satisfies $ \iota_*e_\alpha = \frac{\sqrt{\kappa}}{2}X_\alpha (\alpha=0,1,2), $ i.e.,
	\begin{align}
		e_0 & = \sqrt{\kappa}\frac{\partial}{\partial \tau},\label{eq:e0}\\
		e_1 & = \sqrt{\kappa} \lc \frac{\cos{\tau}}{\sinh{\chi}}\lb \frac{\partial}{\partial \varphi} - \cosh\chi\frac{\partial}{\partial \tau} \rb - \sin{\tau}\frac{\partial}{\partial \chi}  \rc,\label{eq:e1}\\
		e_2 & = \sqrt{\kappa} \lc -\frac{\sin{\tau}}{\sinh{\chi}}\lb \frac{\partial}{\partial \varphi} - \cosh\chi\frac{\partial}{\partial \tau} \rb- \cos{\tau}\frac{\partial}{\partial \chi}  \rc.\label{eq:e2}
	\end{align}
	In this case,
	\begin{equation}\label{eq:standard-AdS3-timelike-Hopf-coordinates}
		g = \iota^*g_0 = \dfrac{1}{\kappa}\lc-\lb d\tau+\cosh{\chi}d\varphi \rb^2 +d\chi^2+\sinh^2{\chi}d\varphi^2 \rc
	\end{equation}
	is the Lorentzian metric on $ \bH^3_1(\kappa/4) $ expressed in the timelike Hopf coordinates.
	The timelike Hopf fibration in the timelike Hopf coordinates is given by the following:
	\begin{equation}
		\pi\circ F \colon (\tau, \chi, \varphi) \mapsto (t,y^1,y^2) =\frac{1}{\sqrt{\kappa}} (\cosh{\chi}, \sinh{\chi}\cos{\varphi}, \sinh{\chi}\sin{\varphi}).
	\end{equation}
	Then the induced metric $ h_0  $ on the base manifold $ \bH^2(\kappa) $ of this coordinates is given by
	\begin{equation}\label{eq:standard-H2-coordinates}
		h_0 = \frac{1}{\kappa}\lb d\chi^2+\sinh^2{\chi}d\varphi^2\rb.
	\end{equation}
	We obtain the following formulae \eqref{eq:lifting-formulae} in \pref{lm:lifting-formulae} for the pushforward of the left-invariant vector fields $ \lc \frac{\sqrt{\kappa}}{2}X_\alpha \rc_{\alpha=0}^2 $ through the following direct calculations:
	\begin{align}
		\frac{\sqrt{\kappa}}{2}F_* X_0|_{(z^1,z^2)} &= -\sqrt{\kappa}\Re(\zbar^1z^2) \frac{\partial}{ \partial \xibar^0} -\frac{\sqrt{\kappa}}{2}\Im((z^1)^2+(z^2)^2)\frac{\partial}{\partial \xibar^1} + \frac{\sqrt{\kappa}}{2}\Re((z^1)^2 + (z^2)^2)\frac{\partial}{\partial \xibar^2},\label{eq:X0-Killing-embedding}\\
		\frac{\sqrt{\kappa}}{2}F_* X_1|_{(z^1,z^2)} &= 2\Re(\zbar^1z^2) \frac{\partial}{\partial t} + \Im((z^1)^2+(z^2)^2)\frac{\partial}{\partial y^1} - \Re((z^1)^2+(z^2)^2)\frac{\partial}{\partial y^2}, \label{eq:X1-Killing-embedding}  \\
		\frac{\sqrt{\kappa}}{2}F_* X_2|_{(z^1,z^2)} &= 2\Im(\zbar^1z^2) \frac{\partial}{\partial t} + \Re((z^1)^2-(z^2)^2)\frac{\partial}{\partial y^1} + \Im((z^1)^2-(z^2)^2)\frac{\partial}{\partial y^2} \label{eq:X2-Killing-embedding}\\
		&+ \frac{\sqrt{\kappa}}{2}(|z^1|^2+|z^2|^2)\frac{\partial}{\partial \xibar^0} +\sqrt{\kappa}\Im(z^1z^2)\frac{\partial}{\partial \xibar^1} -\sqrt{\kappa}\Re(z^1z^2)\frac{\partial}{\partial \xibar^2},\notag
	\end{align}
	where $ (\xibar^0,\xibar^1,\xibar^2) $ are the coordinates of the fiber of the tangent bundle $ T\bR^{1,2}. $
	\begin{lemma}\label{lm:lifting-formulae}\cite{benyounes2011generalized}
		Let $ u = \frac{\sqrt{\kappa}}{2}\pi_*F_*X_2|_{(z^1,z^2)} $ and $ w = \frac{\sqrt{\kappa}}{2}\pi_*F_*X_1|_{(z^1,z^2)} \in T_{(z^1,z^2)}\bH^2(\kappa) $ be tangent vectors. 
		The following results about the liftings with respect to the tangent vector $ u \in T_{(z^1,z^2)}\bH^2(\kappa) $ describe the relations between the tangent vectors on $ \bH^2(\kappa) $ and $ T^1\bH^2(\kappa): $
		\begin{align}
			F_* X_0|_{(z^1,z^2)} & = - w^v, \notag\\
			\frac{\sqrt{\kappa}}{2}F_* X_1|_{(z^1,z^2)} & = w^h, \label{eq:lifting-formulae}\\
			\frac{\sqrt{\kappa}}{2}F_* X_2|_{(z^1,z^2)} & = u^h. \notag
		\end{align}
	\end{lemma}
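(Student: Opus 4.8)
The plan is to verify the three identities by direct computation in the ambient coordinates $(t,y^1,y^2,\xibar^0,\xibar^1,\xibar^2)$ of $T\bR^{1,2}$, using the explicit pushforward formulas \eqref{eq:X0-Killing-embedding}--\eqref{eq:X2-Killing-embedding} together with the descriptions of the lifts in \pref{df:TTM-lifting} and of the connection map in \pref{df:connection-map}. The one geometric input is that the Levi-Civita connection of the embedded hypersurface $\bH^2(\kappa)\subset\bR^{1,2}$ is the tangential projection of the flat ambient connection. Hence, for a vector tangent to $T\bH^2(\kappa)$ expressed in these coordinates, the connection map $K$ reads off the fiber part, i.e.\ the $\partial/\partial\xibar^i$-components, regards it as a vector of $\bR^{1,2}$, and projects it orthogonally onto $T_p\bH^2(\kappa)$; in particular such a vector is horizontal exactly when its fiber part is normal to $\bH^2(\kappa)$, that is, proportional to the position vector $p$. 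The vertical lift is $w^v=\iota_u^{-1}(w)$, obtained from \eqref{eq:vertical-identity} by transferring each base direction $\partial/\partial t,\partial/\partial y^1,\partial/\partial y^2$ to the corresponding fiber direction $\partial/\partial\xibar^0,\partial/\partial\xibar^1,\partial/\partial\xibar^2$.

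For the vertical identity $F_*X_0=-w^v$, I would first note from \eqref{eq:X0-Killing-embedding} that $F_*X_0$ has no base part, so $d\pi(F_*X_0)=0$ and $F_*X_0$ is vertical; since $F$ lands in $T^1\bH^2(\kappa)$, it is the vertical lift of some element of $\{u\}^\perp$. I would then compute $w=\frac{\sqrt{\kappa}}{2}\pi_*F_*X_1$ from the base part of \eqref{eq:X1-Killing-embedding}, form $-\iota_u^{-1}(w)$, and match it against $F_*X_0=\frac{2}{\sqrt{\kappa}}\bigl(\frac{\sqrt{\kappa}}{2}F_*X_0\bigr)$ coefficient by coefficient; the overall factor $\frac{2}{\sqrt{\kappa}}$ and the shared numerical coefficients of \eqref{eq:X0-Killing-embedding} and \eqref{eq:X1-Killing-embedding} produce exactly the sign $-1$. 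A quick check that $\langle w,u\rangle=0$ in $\bR^{1,2}$ confirms that $-w^v$ is a genuine tangent vector of $T^1\bH^2(\kappa)$.

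For the two horizontal identities I would use that $w^h$ (resp.\ $u^h$) is the unique horizontal vector with $d\pi(\cdot)=w$ (resp.\ $u$). For $\frac{\sqrt{\kappa}}{2}F_*X_1$ the projection $d\pi\bigl(\frac{\sqrt{\kappa}}{2}F_*X_1\bigr)=w$ holds by the definition of $w$, and \eqref{eq:X1-Killing-embedding} has no fiber components, so $K\bigl(\frac{\sqrt{\kappa}}{2}F_*X_1\bigr)=0$ and the vector is horizontal; thus it equals $w^h$. For $\frac{\sqrt{\kappa}}{2}F_*X_2$ the projection $d\pi\bigl(\frac{\sqrt{\kappa}}{2}F_*X_2\bigr)=u$ again holds by the definition of $u$ (indeed this base part coincides with the fiber vector of $F$ in \eqref{eq:Hopf-map}, so $u$ is precisely the unit tangent vector at which we lift). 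The crux is the fiber part of \eqref{eq:X2-Killing-embedding}: reading it as a vector of $\bR^{1,2}$ and comparing with $p$ from \eqref{eq:Hopf-map}, one finds it equals $\frac{\kappa}{2}\,p$, a multiple of the position vector and hence normal to $\bH^2(\kappa)$. Its tangential projection vanishes, so $K\bigl(\frac{\sqrt{\kappa}}{2}F_*X_2\bigr)=0$ and $\frac{\sqrt{\kappa}}{2}F_*X_2=u^h$.

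The main obstacle is precisely this horizontality of $\frac{\sqrt{\kappa}}{2}F_*X_2$. In contrast to $X_1$, the pushforward of $X_2$ has a nonzero fiber part, so horizontality is not visible by inspection; it rests on identifying the connection map with tangential projection and on the computation that the fiber part is proportional to the position vector. Once that proportionality is established, the remaining content of the lemma is the bookkeeping of the coordinate expressions \eqref{eq:X0-Killing-embedding}--\eqref{eq:X2-Killing-embedding}.
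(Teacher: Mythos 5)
Your proposal is correct and follows essentially the same route as the paper: the paper itself only records the direct computation of the pushforwards \eqref{eq:X0-Killing-embedding}--\eqref{eq:X2-Killing-embedding} and cites the reference for the lemma, and your argument supplies exactly the remaining bookkeeping (the connection map of the embedded hypersurface as tangential projection, the fiber part of $\frac{\sqrt{\kappa}}{2}F_*X_2$ being $\frac{\kappa}{2}$ times the position vector and hence normal, and the sign match for $F_*X_0=-w^v$), all of which checks out.
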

	Let $ \{\theta^\alpha\}_{\alpha=0}^2 $ be the left-invariant dual frame of $ \lc \frac{\sqrt{\kappa}}{2}X_\alpha \rc_{\alpha=0}^2 $ on $ \bH^3_1(\kappa/4). $
	Now, we obtain the following result in \cite{calvaruso2014metrics}.
	We identify a metric of Kaluza-Klein type on $ T^1\bH^2(\kappa) $ with the induced metric on $ \bH^3_1(\kappa/4). $ 
	\begin{theorem}\cite{calvaruso2014metrics}
		The covering map $ F\colon \bH^3_1(\kappa/4) \to T^1\bH^2(\kappa) $ establishes a one-to-one correspondence between a pseudo-Riemannian  metric $ g_{\lambda\mu\nu} $ on $ \bH^3_1(\kappa/4) $ of the form 
		\begin{equation}\label{eq:Kaluza-Klein-type}
			g_{\lambda\mu\nu} = \lambda\theta^0\otimes\theta^0+\mu\theta^1\otimes\theta^1+\nu\theta^2\otimes\theta^2,
		\end{equation}
		and a metric $ G $ of Kaluza-Klein type on $ T^1\bH^2(\kappa) $ as descrived in \eqref{eq:g-natural-unit-sphere}, defined by the following conditions:
		\begin{equation}
			a = \frac{4\lambda}{\kappa}, \quad b=0, \quad c=\mu-\frac{4\lambda}{\kappa}, \quad d=\nu-\mu.
		\end{equation}
	\end{theorem}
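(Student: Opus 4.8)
The plan is to verify directly that the pullback $F^*G$ of the $g$-natural metric $G$ on $T^1\bH^2(\kappa)$ coincides with $g_{\lambda\mu\nu}$ precisely when $(a,b,c,d)$ take the stated values, and then to observe that the assignment $(\lambda,\mu,\nu)\mapsto(a,b,c,d)=(4\lambda/\kappa,\,0,\,\mu-4\lambda/\kappa,\,\nu-\mu)$ is a bijection onto the Kaluza-Klein slice $b=0$. Since $F$ is a local diffeomorphism (indeed a covering map), it suffices to compare the two tensors pointwise. I would work in the left-invariant frame $\{\tfrac{\sqrt{\kappa}}{2}X_\alpha\}_{\alpha=0}^2$, because $\{\theta^\alpha\}$ is by construction its dual coframe, so that $g_{\lambda\mu\nu}(\tfrac{\sqrt{\kappa}}{2}X_\alpha,\tfrac{\sqrt{\kappa}}{2}X_\beta)=\diag(\lambda,\mu,\nu)_{\alpha\beta}$; the whole statement then reduces to computing the six components $G(\tfrac{\sqrt{\kappa}}{2}F_*X_\alpha,\tfrac{\sqrt{\kappa}}{2}F_*X_\beta)$ and matching them against this diagonal matrix.

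The central input is \pref{lm:lifting-formulae}, which rewrites the pushed-forward frame as $\tfrac{\sqrt{\kappa}}{2}F_*X_0=-\tfrac{\sqrt{\kappa}}{2}w^v$, $\tfrac{\sqrt{\kappa}}{2}F_*X_1=w^h$, and $\tfrac{\sqrt{\kappa}}{2}F_*X_2=u^h$, with $u=\tfrac{\sqrt{\kappa}}{2}\pi_*F_*X_2$ and $w=\tfrac{\sqrt{\kappa}}{2}\pi_*F_*X_1$. Before substituting these into the defining equations \eqref{eq:g-natural-unit-sphere}, I must establish the geometric fact that $\{u,w\}$ is an orthonormal frame of $(T_p\bH^2(\kappa),g)$, i.e. $g(u,u)=g(w,w)=1$ and $g(u,w)=0$; in particular $w\in\{u\}^\perp$, so that $w^v$ is a legitimate vertical vector. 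I would prove this by reading off from \eqref{eq:X1-Killing-embedding} and \eqref{eq:X2-Killing-embedding} that, as vectors in the ambient $\bR^{1,2}$, one has $u=c_1$ and $w=-c_2$, where $(c_0,c_1,c_2)=A_{(z^1,z^2)}$ is the $\SO_0(1,2)$-matrix of the double cover. The columns $c_1,c_2$ are spacelike unit vectors, mutually orthogonal and orthogonal to $c_0\propto p$, by orthonormality of the columns of an element of $\SO_0(1,2)$ with respect to $\eta=\diag(-1,1,1)$, which restricts to $g$ on $\bH^2(\kappa)$.

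With orthonormality in hand, substituting the lifts into \eqref{eq:g-natural-unit-sphere} is pure bilinear bookkeeping. The diagonal entries give $G_{00}=\tfrac{\kappa}{4}\,a\,g(w,w)=\tfrac{\kappa}{4}a$, $G_{11}=(a+c)g(w,w)+d\,g(w,u)^2=a+c$, and $G_{22}=(a+c)g(u,u)+d\,g(u,u)^2=a+c+d$; the remaining components are $G_{01}=-\tfrac{\sqrt{\kappa}}{2}b\,g(w,w)$, $G_{02}=-\tfrac{\sqrt{\kappa}}{2}b\,g(w,u)$, and $G_{12}=(a+c)g(w,u)+d\,g(w,u)g(u,u)=0$. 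Equating this matrix with $\diag(\lambda,\mu,\nu)$ forces $b=0$ together with $\tfrac{\kappa}{4}a=\lambda$, $a+c=\mu$, $a+c+d=\nu$, which are exactly the asserted relations $a=4\lambda/\kappa$, $c=\mu-4\lambda/\kappa$, $d=\nu-\mu$. These equations are solvable for $(a,c,d)$ from $(\lambda,\mu,\nu)$ and conversely by $\lambda=\kappa a/4$, $\mu=a+c$, $\nu=a+c+d$, so the correspondence is one-to-one onto the slice $b=0$, which is the Kaluza-Klein condition attached to \eqref{eq:g-natural-unit-sphere}.

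I expect the only genuinely nontrivial step to be the orthonormality of $\{u,w\}$: matching $u,w$ with the $\pm$ columns of $A_{(z^1,z^2)}$ requires carefully converting the real/imaginary expressions in \eqref{eq:X1-Killing-embedding}--\eqref{eq:X2-Killing-embedding} (for instance $\Re(-\sqrt{-1}\zbar^1z^2)=\Im(\zbar^1z^2)$ and $\Re((\zbar^1)^2+(\zbar^2)^2)=\Re((z^1)^2+(z^2)^2)$) and tracking the scaling $\kappa/4$ together with the sign in $F_*X_0=-w^v$. Once the frame is shown to be orthonormal, the remainder is a finite linear computation, and the bijectivity of the parameter correspondence is immediate.
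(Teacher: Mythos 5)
Your proof is correct, and it follows the route the paper clearly intends: the paper states this theorem as a cited result from \cite{calvaruso2014metrics} without writing out a proof, but it assembles exactly the ingredients you use, namely the pushforward formulae \eqref{eq:X0-Killing-embedding}--\eqref{eq:X2-Killing-embedding}, the lifting identities of \pref{lm:lifting-formulae}, and the definition \eqref{eq:g-natural-unit-sphere} of the $g$-natural metric on $T^1M$. Your identification $u=c_1$, $w=-c_2$ with the columns of $A_{(z^1,z^2)}\in\SO_0(1,2)$ correctly supplies the orthonormality needed to make $w^v$ well-defined and to collapse the bilinear bookkeeping to $\tfrac{\kappa}{4}a=\lambda$, $a+c=\mu$, $a+c+d=\nu$, $b=0$, which is precisely the stated correspondence.
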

	\begin{definition}\label{df:Kaluza-Klein-type}
		A pseudo-Riemannian metric $ g $ on $ \bH^3_1 = \bH^3_1(1) $ is said to be Kaluza-Klein type if there exist three real constants $ \lambda,\mu,\nu \neq 0, $ such that $ g = g_{\lambda\mu\nu} $ is described in \eqref{eq:Kaluza-Klein-type}.
	\end{definition}
	 Hereafter, we assume that $\lambda \neq 0$ and $\mu, \nu > 0$. For simplicity, only the sign of $\lambda$ determines whether the metric $ g_{\lambda\mu\nu} $ is Riemannian or Lorentzian, since $ \bS^1 $ direction is the standard timelike coordinate of $ \AdS_3. $
	Next, we discuss the homogeneous structures and the group of isometries with the reductive decompositions for the each Kaluza-Klein type metric $ g_{\lambda\mu\nu} $ on $ \bH^3_1 $.

	\section{Homogeneous pseudo-Riemannian structures on $ (\bH^3_1, g_{\lambda\mu\nu}) $ }
	\label{sc:HS-classification}
	If $ -\lambda=\mu=\nu=1,  (\bH^3_1, g_{\lambda\mu\nu}) $ represents the hypersurface in $ \bR^{2,2} $ with the standard Lorentzian metric $ \iota^*g_0 $ known as $ \AdS_3. $
	In this case, $ (\bH^3_1, g_{\lambda\mu\nu}), $ which is symmetric, has a homogeneous Lorentzian structure $ S=0 $.
	The group of isometries of $ \AdS_3 $ is $ \SO_0(2,2)\simeq (\SU(1,1)\times \SU(1,1))/\bZ_2. $
	In this section, we study all homogeneous pseudo-Riemannian structures (homogeneous structures) $ S $ on $ (\bH^3_1,g_{\lambda\mu\nu}). $  
	Then every homogeneous pseudo-Riemannian  structure on $(\bH^3_1, g_{\lambda\mu\nu}) $ has an isometry group that acts transitively and almost effectively of the form $ \SU(1,1)\times H $ as
	\begin{equation}\label{eq:isometric action-subgroup}
		\SU(1,1)\times H \times\bH^3_1 \ni \lb (g_L, g_R), Z \rb \mapsto g_L Zg_R^\dagger \in \bH^3_1, 
	\end{equation}
	where $ H =U(1), \bR, \Aff(\bR), \SU(1,1) $ or $ \{e\} $ is a subgroup of $ \SU(1,1). $
	
	We shall calculate a $ (0,3) $-type  tensor field $ S $ called a homogeneous pseudo-Riemannian structure that is given by the following expression for each $ g_{\lambda\mu\nu} $ from the condition $\widetilde\nabla g_{\lambda\mu\nu} = 0$:
	\begin{equation}\label{eq:homogeneous-str}
		S = \rho\otimes\theta^0\wedge\theta^1 + \sigma\otimes\theta^1\wedge\theta^2+\tau\otimes\theta^2\wedge\theta^0.
	\end{equation} 
	The corresponding $(1,2)$-tensor field with the metric isomorphism \eqref{eq:homogeneous structure tensor} defines the canonical connection $ \widetilde\nabla = \nabla-S, $ which is a metric connection. 
	We denote $ \rho_\alpha, \sigma_\alpha, $ and $ \tau_\alpha $ as the functions on $ \bH^3_1 $ for $ \alpha=0,1,2 $ and we express $ \rho, \sigma $ and $\tau $ in $ \mathfrak{su}(1,1)^* $ as follows: \begin{equation}
		\rho=\rho_\alpha \theta^\alpha, \quad \sigma=\sigma_\alpha\theta^\alpha,\quad \tau=\tau_\alpha\theta^\alpha. 
	\end{equation}
	The Lie algebra structure of the left-invariant vector fields $ \{X_\alpha\}_{\alpha=0}^2 $ on $ \bH^3_1 $ defined in \eqref{eq:su(1,1)-basis} is the following:
	\begin{equation}\label{eq:Lie-alg-su(1,1)}
		[X_0,X_1]=2X_2, \quad [X_1,X_2]=-2X_0, \quad [X_2,X_0]=2X_1.
	\end{equation}
	The Levi-Civita connection $ \nabla $ of $ g_{\lambda\mu\nu} $ is given by
	\begin{equation}
		2g_{\lambda\mu\nu}(\nabla_XY,Z) = g_{\lambda\mu\nu}([X,Y],Z) - g_{\lambda\mu\nu}([Y,Z],X) + g_{\lambda\mu\nu}([Z,X],Y),
	\end{equation}
	for all $ X,Y,Z\in \mathfrak{su}(1,1). $
	Therefore, the connection forms $ \{\omega^i_j\} $ with resspect to the left-invariant frame of $ \nabla $ are given as follows:
	\begin{align}
		\omega^0_1 & = \frac{\lambda-\mu+\nu}{\lambda}\theta^2 ,\ \omega^0_2 = -\frac{\lambda+\mu-\nu}{\lambda}\theta^1, \notag\\
		\omega^1_0 & = -\frac{\lambda-\mu+\nu}{\mu}\theta^2, \omega^1_2 = -\frac{\lambda+\mu+\nu}{\mu}\theta^0, \label{eq:connection-form-general}\\
		\omega^2_0 & = \frac{\lambda+\mu-\nu}{\nu}\theta^1, \ \omega^2_1 = \frac{\lambda+\mu+\nu}{\nu}\theta^0.\notag
	\end{align}	 
	The non-trivial components of the curvature tensor field are as follows:
	\begin{align}
		R(X_0,X_1) & = \lb \frac{\lambda}{\nu} + \frac{2 \mu}{\nu} - 2 + \frac{\mu^{2}}{\lambda \nu} + \frac{2 \mu}{\lambda} - \frac{3 \nu}{\lambda}\rb \theta^1\otimes X_0 \notag\\
		& + \lb - \frac{\lambda^{2}}{\mu \nu} - \frac{2 \lambda}{\nu} + \frac{2 \lambda}{\mu} - \frac{\mu}{\nu} - 2 + \frac{3 \nu}{\mu}\rb \theta^0\otimes X_1,\notag\\
		R(X_1,X_2) & = \lb - \frac{3 \lambda}{\mu} - 2 - \frac{2 \nu}{\mu} + \frac{\mu}{\lambda} - \frac{2 \nu}{\lambda} + \frac{\nu^{2}}{\lambda \mu} \rb \theta^2\otimes X_1 \label{eq:curvature-form-general} \\
		& + \lb \frac{3 \lambda}{\nu} + \frac{2 \mu}{\nu} + 2 - \frac{\mu^{2}}{\lambda \nu} + \frac{2 \mu}{\lambda} - \frac{\nu}{\lambda} \rb \theta^1\otimes X_2,\notag\\
		R(X_2,X_0) & = \lb - \frac{\lambda}{\mu} + 2 - \frac{2 \nu}{\mu} + \frac{3 \mu}{\lambda} - \frac{2 \nu}{\lambda} - \frac{\nu^{2}}{\lambda \mu} \rb \theta^2 \otimes X_0 \notag \\
		& +\lb\displaystyle \frac{\lambda^{2}}{\mu \nu} - \frac{2 \lambda}{\nu} + \frac{2 \lambda}{\mu} - \frac{3 \mu}{\nu} + 2 + \frac{\nu}{\mu} \rb \theta^0\otimes X_2.\notag
	\end{align}
	Then under the conditions $ \widetilde\nabla S = 0 $ and $ \widetilde\nabla R = 0, $ we determine the homogeneous structure $ S. $
	
	Conversely, we define an appropriate transitive action, which is given by \eqref{eq:isometric action-subgroup}, on $ \bH^3_1 $ whose Lie algebra structure corresponds to the $ (\nabla-S)_{X^*}Y^* = -[X,Y]^* (X,Y \in \frakm = T_oM) $ for each homogeneous structure $ S. $
	First, we have to confirm whether the metric $ g_{\lambda\mu\nu} $ on $ \bH^3_1 $ is invariant under the isometric action or not. The following \pref{pr:invariant-metric} gives a description of invariant metrics on a reductive homogeneous space and its $ \Ad|_H $-invariant subspace.
	
	\begin{proposition}\label{pr:invariant-metric}\cite{arvanitogeorgos2003introduction}
		Let $ M = G/H $ be a reductive homogeneous space with a reductive decomposition $ \frakg=\frakh\oplus\frakm, $ that is, $ \frakg $ and $\frakh $ are Lie algebra of $ G $ and $ H, $ and $ \frakm $ is an $ \Ad|_H $-invariant subspace. 
		Then, there is a one-to-one correspondence between:
		\begin{enumerate}
			\item A $ G $-invariant pseudo-Riemannian metric $ g $ on $ M, $
			\item An $ \Ad|_H $-invariant linear pseudo-Riemannian metric $ \la \bullet , \bullet \ra $ on $ \mathfrak{m}. $
		\end{enumerate}
	\end{proposition}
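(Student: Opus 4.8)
The plan is to establish the bijection by passing through the tangent space $T_oM$ at the base point $o=eH$, using the identification $\tau\colon \frakm \to T_oM$, $X\mapsto X^*|_o$, and exploiting the fact that the linear isotropy representation of $H$ on $T_oM$ is conjugate, via $\tau$, to the restriction of the adjoint representation $\Ad|_H$ to $\frakm$. The one structural input I would record first is precisely this compatibility. For $h\in H$ the left translation $L_h\colon gH\mapsto hgH$ fixes $o$, and from $h\exp(sX)h^{-1}=\exp(s\,\Ad_h X)$ together with $hH=H$ one gets $L_h\big(\exp(sX)\cdot o\big)=\exp(s\,\Ad_h X)\cdot o$; differentiating at $s=0$ yields $(dL_h)_o\circ\tau=\tau\circ\Ad_h$. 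Note that the right-hand side makes sense exactly because $\frakm$ is $\Ad|_H$-invariant, so $\Ad_h X\in\frakm$; this is where reductivity enters, and it is the only nonroutine ingredient.

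For the direction (1) $\Rightarrow$ (2), given a $G$-invariant metric $g$ I would set $\la X,Y\ra \coloneqq g_o(\tau X,\tau Y)$ for $X,Y\in\frakm$, which is a nondegenerate symmetric bilinear form of the same signature as $g$. Since each $L_h$ with $h\in H$ is an isometry fixing $o$, its differential $(dL_h)_o$ preserves $g_o$; combining this with the compatibility $(dL_h)_o\circ\tau=\tau\circ\Ad_h$ gives $\la\Ad_h X,\Ad_h Y\ra=\la X,Y\ra$, which is the asserted $\Ad|_H$-invariance. For the reverse direction (2) $\Rightarrow$ (1), given an $\Ad|_H$-invariant form $\la\bullet,\bullet\ra$ on $\frakm$ I would transport it to $g_o$ on $T_oM$ through $\tau$ and then spread it over $M$ by forcing the $L_g$ to be isometries: for $p=gH$ and $u,v\in T_pM$ set $g_p(u,v)\coloneqq g_o\big((dL_{g^{-1}})_p u,\,(dL_{g^{-1}})_p v\big)$.

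The crux of this second direction, and the step I expect to be the main obstacle, is well-definedness, since $g$ is only specified up to right multiplication by $H$. If $gH=g'H$ then $g'=gh$ with $h\in H$, and the two candidate values for $g_p$ differ by precomposing $(dL_{g^{-1}})_p$ with $(dL_h)_o=\tau\circ\Ad_h\circ\tau^{-1}$; they therefore coincide precisely because $\Ad_h$ preserves $g_o$, which is exactly the hypothesis. Smoothness and $G$-invariance of the resulting $g$ are then immediate from the construction. Finally I would check that the two assignments are mutually inverse: restricting a spread-out metric back to $o$ recovers $\la\bullet,\bullet\ra$, and spreading out the $o$-restriction of a $G$-invariant metric recovers the original $g$ by its invariance. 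I emphasize that nothing in this argument uses positive-definiteness; the only change in the pseudo-Riemannian setting is that $\la\bullet,\bullet\ra$ is merely nondegenerate of a prescribed signature, so the proof goes through verbatim.
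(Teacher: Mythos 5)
Your proof is correct and is the standard argument for this result; the paper itself states the proposition without proof, citing Arvanitogeorgos, and your argument (identifying $\frakm$ with $T_oM$ via $\tau$, using $(dL_h)_o\circ\tau=\tau\circ\Ad_h$, and checking well-definedness of the spread-out metric under $g\mapsto gh$) is exactly the textbook proof being referenced. Your closing remark that nondegeneracy replaces positive-definiteness with no change to the argument is also the right observation for the pseudo-Riemannian setting used in this paper.
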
  
	
	This \pref{pr:invariant-metric} represents there are no deformed metric $ g_{\lambda\mu\nu} $ on $ \bH^3_1 $ from the standard Lorentzian metric of $ \AdS_3 $ along any lightlike Killing.
	
	\subsection{The case of \( -\lambda\neq\mu,\mu\neq\nu, \nu \neq -\lambda \)}\label{sc:trivial}
	From \pref{pr:invariant-metric}, the all corresponding isotropy groups are trivial.
	The connenction form $ \omega $ and curvature tensor $ R $ of the Levi-Civita connection $ \nabla $ were given in \eqref{eq:connection-form-general}, \eqref{eq:curvature-form-general}.
	We have the following equations from $ (\nabla-S)R=0: $
	\begin{align}
		\rho_0(\lambda-\mu+\nu)& =0,\\
		\rho_1(\lambda-\mu+\nu) & = 0,\\
		\rho_2(\lambda-\mu+\nu) & = - (\lambda-\mu+\nu)^2,\\
		\sigma_0(\lambda+\mu+\nu) & = (\lambda+\mu+\nu)^2,\\
		\sigma_1(\lambda+\mu+\nu) & = 0,\\
		\sigma_2(\lambda+\mu+\nu) & = 0,\\
		\tau_0(\lambda+\mu-\nu) & = 0,\\
		\tau_1(\lambda+\mu-\nu) & = -(\lambda+\mu-\nu)^2,\\
		\tau_2(\lambda+\mu-\nu) & = 0.
	\end{align}
	As a result of the above equations and $ (\nabla-S)S = 0, $ the homogeneous structures $ S=S_0 $ is given by the following: 
	\begin{equation}\label{eq:homogeneous-str-different}
		S_0 = -(\lambda-\mu +\nu)\theta^2\otimes\theta^0\wedge\theta^1 + (\lambda + \mu + \nu)\theta^0\otimes\theta^1\wedge\theta^2 -(\lambda + \mu -\nu) \theta^1\otimes\theta^2\wedge\theta^0.
	\end{equation}
	When $S=S_0$, the canonical connection $ \widetilde\nabla= \nabla-S $ has a trivial connection form with respect to $\{X_\alpha\}_{\alpha=0}^2 $; hence, the curvature tensor vanishes.
	Therefore, we obtain the following result.
	
	\begin{theorem}\label{th:homogeneous-structure-different}
		Assume that $ - \lambda \neq \mu, \mu \neq \nu, \nu \neq -\lambda.$ 
		There exists a transitive and effective action of the isometry group $ \SU(1,1) $ on pseudo-Riemannian manifolds $ (\bH^3_1, g_{\lambda\mu\nu}) $ from the left induced by the homogeneous structure $S=S_0$ in \eqref{eq:homogeneous-str-different} with the trivial reductive decomposition.
	\end{theorem}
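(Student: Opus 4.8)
The plan is to realize $(\bH^3_1, g_{\lambda\mu\nu})$ directly as the Lie group $\SU(1,1)$ equipped with the left-invariant metric $g_{\lambda\mu\nu}$ and to exhibit the desired action as left translation, rather than to reconstruct a group abstractly. Under the identification \eqref{eq:isom-H^3_1-SU(1,1)} the coframe $\{\theta^\alpha\}_{\alpha=0}^2$ is left-invariant, so $g_{\lambda\mu\nu}=\lambda\theta^0\otimes\theta^0+\mu\theta^1\otimes\theta^1+\nu\theta^2\otimes\theta^2$ is a left-invariant metric on $\SU(1,1)$; equivalently, by \pref{pr:invariant-metric} with the trivial isotropy $H=\{e\}$, every inner product on $\frakm=\mathfrak{su}(1,1)$ is $\Ad|_{\{e\}}$-invariant, so $g_{\lambda\mu\nu}$ is invariant under the action \eqref{eq:isometric action-subgroup} with $H=\{e\}$, namely $(a,e)\cdot Z=aZ$. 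First I would record that this left-translation action is simply transitive: it is transitive because $\SU(1,1)$ acts transitively on itself, and it is free, hence effective, so the isotropy subgroup at every point is trivial and the reductive decomposition is the trivial one $\mathfrak{su}(1,1)=\{0\}\oplus\mathfrak{su}(1,1)$ with $\frakm=\mathfrak{su}(1,1)$.

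It then remains to check that the homogeneous pseudo-Riemannian structure induced by this action is exactly $S_0$. By \eqref{eq:nabla-Lie} the canonical connection $\widetilde\nabla$ of $\SU(1,1)=\SU(1,1)/\{e\}$ is fixed by $\widetilde\nabla_{X^*}Y^*|_o=-[X,Y]^*|_o$ for $X,Y\in\frakm$, where the fundamental fields $X^*$ of left translation are the right-invariant vector fields. I would show that this $\widetilde\nabla$ is precisely Cartan's $(-)$-connection, i.e.\ the unique left-invariant connection for which the left-invariant frame $\{X_\alpha\}$ is parallel, $\widetilde\nabla X_\alpha=0$. Given this, the computation already recorded before the statement — that $\nabla-S_0$ has trivial connection form with respect to $\{X_\alpha\}$, hence $(\nabla-S_0)X_\alpha=0$ — identifies $\widetilde\nabla=\nabla-S_0$, so the induced homogeneous structure $S=\nabla-\widetilde\nabla$ equals $S_0$ and its curvature $\Rtilde$ vanishes, consistently with the trivial holonomy $\frakh=0$.

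The hard part is the left/right bookkeeping in the previous paragraph: the Killing fields of the left-translation action are right-invariant, whereas $\{X_\alpha\}$ is left-invariant, so the identification of $\widetilde\nabla$ with the $(-)$-connection is not immediate. I would carry it out by expressing a right-invariant field $Y^*$ in the left-invariant frame through the adjoint action, $Y^*|_g=(dL_g)(\Ad_{g^{-1}}Y)$, differentiating $\Ad_{g^{-1}}Y$ at $o$ to produce the term $-[X,Y]$, and comparing with the defining formula $\widetilde\nabla_{X^*}Y^*|_o=-[X,Y]^*|_o$; this forces the left-invariant Nomizu map to vanish, i.e.\ $\widetilde\nabla X_\alpha=0$. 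Once this is established, transitivity, effectiveness, and triviality of the reductive decomposition are immediate, and the equality $S=S_0$ follows from \eqref{eq:homogeneous-str-different} together with the connection-form computation.
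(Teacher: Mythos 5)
Your proposal is correct and follows essentially the same route as the paper: the decisive ingredient in both is the observation, recorded just before the theorem, that $\nabla-S_0$ has trivial connection form in the left-invariant frame $\{X_\alpha\}$, so that the canonical connection of the simply transitive left-translation action of $\SU(1,1)$ (Cartan's $(-)$-connection, via \eqref{eq:nabla-Lie}) coincides with $\nabla-S_0$. The only cosmetic difference is that you obtain triviality of the isotropy from freeness of left translation, whereas the paper reads it off from the vanishing of $\Rtilde$ (trivial holonomy) together with \pref{pr:invariant-metric}; these are equivalent here.
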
 
	
	
	\subsection{The case of \( - \lambda \neq \mu = \nu \)}\label{sc:time}
	In this case, the metric $ g_{\lambda\mu\nu} $ is deformed along the timelike Hopf fiber $ \bS^1 \hookrightarrow \bH^3_1 \to \bH^2(4), $ where we choose the direction of Hopf fiber along to $ X_0. $
	We obtain the following result regarding the existence and classification of homogeneous structures on $ (\bH^3_1, g_{\lambda\mu\nu}). $
	\begin{theorem}\label{th:homogeneous-str-timelike}
		If $ -\lambda \neq \mu=\nu, $ any homogeneous structure $ S $ on the pseudo-Riemannian manifold $ (\bH^3_1, g_{\lambda\mu\nu}) $ is given by the following $ S = S_\lambda(t) $ or $ S_0 : $ 
		\begin{equation}
			S_\lambda(t) = -\lambda\theta^2\otimes\theta^0\wedge\theta^1 + t\theta^0\otimes\theta^1\wedge\theta^2-\lambda\theta^1\otimes\theta^2\wedge\theta^0, \label{eq:s1}
		\end{equation}
		where $ \lambda + 2\mu \neq t \in \bR$ is a constant. Moreover, $ S_\lambda(t) $ coincides with $ S_0, $ by substituting $ t = \lambda+2\mu.$
	\end{theorem}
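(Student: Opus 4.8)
The plan is to pin down $S$ from the three defining conditions of \pref{df:homogeneous-structure} after specializing the general formulas of this section to the case $\mu=\nu$. Since the $(0,3)$-tensor in \eqref{eq:homogeneous-str} is built from the skew forms $\theta^0\wedge\theta^1,\ \theta^1\wedge\theta^2,\ \theta^2\wedge\theta^0$, each $S_X$ is automatically skew-symmetric with respect to $g_{\lambda\mu\nu}$, so the metric condition $\widetilde\nabla g_{\lambda\mu\nu}=0$ holds by construction and only $\widetilde\nabla R=0$ and $\widetilde\nabla S=0$ (with $\widetilde\nabla=\nabla-S$) remain to be imposed. First I would substitute $\mu=\nu$ into the connection forms \eqref{eq:connection-form-general} and the curvature \eqref{eq:curvature-form-general}: the three combinations $\lambda-\mu+\nu,\ \lambda+\mu-\nu,\ \lambda+\mu+\nu$ collapse to $\lambda,\ \lambda,\ \lambda+2\mu$, and several curvature coefficients that were mutually independent in the generic case of \pref{sc:trivial} now coincide. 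This degeneracy is precisely what will leave one coefficient free and thereby produce a one-parameter family rather than the single structure $S_0$ obtained in \pref{sc:trivial}.

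Next I would expand $\widetilde\nabla R=0$ in the left-invariant frame $\{X_\alpha\}_{\alpha=0}^2$. Because $R$ and $\nabla$ are left-invariant, $\nabla R$ has constant components and the endomorphism $S_{X_\gamma}$ acts algebraically on $R$, so $\widetilde\nabla R=0$ reduces to a system of algebraic equations in the nine component functions $\rho_\alpha,\sigma_\alpha,\tau_\alpha$, forcing them to be constants wherever the relevant curvature coefficient is nonzero. Solving this system under the hypothesis $-\lambda\neq\mu=\nu$ (so that $\lambda\neq 0$), I expect to obtain $\rho_0=\rho_1=0$, $\rho_2=-\lambda$, $\tau_0=\tau_2=0$, $\tau_1=-\lambda$, and $\sigma_1=\sigma_2=0$, while $\sigma_0$ is left undetermined; this surviving constant is the parameter $t$.

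It then remains to impose $\widetilde\nabla S=0$. Since the components have already been pinned to constants, the derivative terms in $\nabla_{X_\gamma}S$ drop out and $\widetilde\nabla S=0$ becomes the algebraic identity $\nabla_{X_\gamma}S=S_{X_\gamma}\cdot S$, a compatibility of $S$ with the Levi-Civita connection, which I would verify holds for every value of $\sigma_0=t$. This identifies the general solution with $S=S_\lambda(t)$ of \eqref{eq:s1}, $t\in\bR$ arbitrary. Finally, substituting $\mu=\nu$ into $S_0$ of \eqref{eq:homogeneous-str-different} gives $-\lambda\,\theta^2\otimes\theta^0\wedge\theta^1+(\lambda+2\mu)\,\theta^0\otimes\theta^1\wedge\theta^2-\lambda\,\theta^1\otimes\theta^2\wedge\theta^0$, which is exactly $S_\lambda(\lambda+2\mu)$, establishing the last assertion.

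The main obstacle is the bookkeeping of the coupled algebraic system coming from $\widetilde\nabla R=0$ and $\widetilde\nabla S=0$: one must check carefully that the $\mu=\nu$ degeneracy kills the equations that, in the generic case, fixed $\sigma_0$, and that no spurious solution branches survive, so that the full solution set is exactly the one-parameter family $\{S_\lambda(t)\}_{t\in\bR}$, inside which $S_0$ sits as the single member $t=\lambda+2\mu$.
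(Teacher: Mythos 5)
Your skeleton — take the ansatz \eqref{eq:homogeneous-str} (which already encodes $\widetilde\nabla g_{\lambda\mu\nu}=0$), impose $\widetilde\nabla R=0$, then $\widetilde\nabla S=0$ — is the same as the paper's, and your closing observation that substituting $\mu=\nu$ into \eqref{eq:homogeneous-str-different} gives $S_\lambda(\lambda+2\mu)=S_0$ is correct. However, there is a genuine error in where you locate the work. You claim that $\widetilde\nabla R=0$ already forces $\sigma_1=\sigma_2=0$, leaving only $\sigma_0$ free. That is false. When $\mu=\nu\neq-\lambda$ the curvature operator acts as one scalar on $\vspan_\bR\{X_0\wedge X_1,\,X_2\wedge X_0\}$ and as a different scalar on $\bR(X_1\wedge X_2)$, so $\widetilde\nabla R=0$ only requires each endomorphism $\widetilde\omega(X_\gamma)=\omega(X_\gamma)-S_{X_\gamma}$ to commute with this two-eigenvalue operator, i.e.\ to be proportional to $\theta^1\otimes X_2-\theta^2\otimes X_1$. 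This constrains exactly the $\theta^0\wedge\theta^1$- and $\theta^2\wedge\theta^0$-components, yielding $\rho_0=\rho_1=\tau_0=\tau_2=0$ and $\rho_2=\tau_1=-\lambda$, but it leaves the \emph{entire} one-form $\sigma=\sigma_0\theta^0+\sigma_1\theta^1+\sigma_2\theta^2$ — three a priori non-constant functions — undetermined. This is precisely what the paper's proof records after its first step.

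Consequently your treatment of $\widetilde\nabla S=0$ as a routine algebraic verification "for every value of $\sigma_0=t$" misses the substantive part of the argument. In the paper, $\widetilde\nabla S=0$ is the condition that eliminates $\sigma_1$ and $\sigma_2$: it produces a coupled nonlinear first-order system, including $X_\alpha(\sigma_0)=0$, $\mu X_1(\sigma_2)=\sigma_1^2$, $\mu X_2(\sigma_1)=-\sigma_2^2$, $\mu X_0(\sigma_1)=-\sigma_2(\sigma_0-\lambda-2\mu)$, and so on, and one must argue (using constancy of the coefficients together with these quadratic relations, e.g.\ $X_1(\sigma_2)=0$ forces $\sigma_1^2=0$) that the only solution is $\sigma_1=\sigma_2=0$ with $\sigma_0=t$ constant. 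As written, your plan would either terminate with a spurious three-parameter family after the $\widetilde\nabla R$ step, or — if you instead carried over the nine generic-case equations of \pref{sc:trivial} with $\lambda+\mu+\nu=\lambda+2\mu\neq0$ — over-constrain and wrongly conclude $\sigma_0=\lambda+2\mu$, i.e.\ $S=S_0$ only, contradicting the existence of the one-parameter family. The gap is therefore not mere bookkeeping: the identification of which of the three Ambrose--Singer conditions actually cuts down $\sigma$ is the content of the proof.
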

	\begin{proof}
		We have the following conditions by $ \widetilde\nabla R = 0 $ that the homogeneous structures $ S $ should be satisfied:
		\begin{equation}
			\tau_0 = \rho_0 = \rho_1 = \tau_2 = 0,\quad \tau_1= \rho_2 = -\lambda.
		\end{equation}
		Then we have
		$ S = -\lambda\theta^2\otimes\theta^0\wedge\theta^1 + \sigma\otimes\theta^1\wedge\theta^2-\lambda\theta^1\otimes\theta^2\wedge\theta^0, $
		and the following equations
		\begin{align}
			X_\alpha(\sigma_0) & = 0,(\alpha=0,1,2)\\
			\mu X_0(\sigma_1) & = -\sigma_2(\sigma_0-\lambda-2\mu),\\
			\mu X_0(\sigma_2) & = \sigma_1(\sigma_0-\lambda-2\mu),\\
			\mu X_1(\sigma_1) & = -\sigma_1\sigma_2,\\
			\mu X_1(\sigma_2) & = \sigma_1^2,\\
			\mu X_2(\sigma_1) & = -\sigma_2^2,\\
			\mu X_2(\sigma_2) & = \sigma_1\sigma_2,
		\end{align}
		are derived from $ \widetilde\nabla S = 0. $
		Then the coefficients of homogeneous structure expanded by left-invariant forms are constants.
		Therefore, we obtain $ S = S_\lambda(t) $ by setting $ \sigma_0 = t $ as a constant.
	\end{proof}
	Then we determine the action of the isometry group of the form $\SU(1,1) \times H $ and the reductive decomposition induced by the homogeneous structure $S_\lambda(t)$.
	\begin{theorem}\label{th:isometry-timelike-1}
		Assume that $ -\lambda \neq \mu=\nu. $  
		If $ t \neq \lambda+2\mu, $
		the isometry group $ \SU(1,1)\times U(1) $ of the pseudo-Riemannian manifold $(\bH^3_1,g_{\lambda\mu\nu})$ acts transitively and almost effectively on it via the map:
		\begin{equation}\label{eq:U(1,1)-action}
			\phi\colon \SU(1,1) \times U(1) \times \bH^3_1 \ni \lb (A_0, e^{\sqrt{-1}s}), Z \rb \mapsto A_0 Z 
			\begin{pmatrix}
				e^{-\sqrt{-1}s} & 0 \\
				0 & e^{\sqrt{-1}s}
			\end{pmatrix} 
			\in \bH^3_1.
		\end{equation}
 This leads to a transitive and effective isometric action of $ U(1,1), $ with the following reductive decomposition $ \frakg = \frakm \oplus \frakh $ induced by the homogeneous structure $ S = S_\lambda(t): $
		\begin{align}
			\frakh & = \vspan_\bR\lc 
			\begin{pmatrix}
				0 & 0 \\
				0 & \sqrt{-1}
			\end{pmatrix} \rc,\\ 
			\frakm & = \vspan_\bR
			\lc 
			\begin{pmatrix}
				\sqrt{-1} & 0 \\
				0 & \lb1+\frac{\lambda - t}{\mu}\rb \sqrt{-1}
			\end{pmatrix},
			\begin{pmatrix}
				0 & 1 \\
				1 & 0
			\end{pmatrix},
			\begin{pmatrix}
				0 & \sqrt{-1} \\
				-\sqrt{-1} & 0
			\end{pmatrix}
			\rc.
		\end{align}
		
		If $ t = \lambda+2\mu, \ \SU(1,1) $ trasitively and effectively acts on $ \bH^3_1 $ as isometry group from the left with the trivial reductive decomposition. 
	\end{theorem}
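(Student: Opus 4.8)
The plan is to realize the abstract Ambrose--Singer reconstruction attached to $S_\lambda(t)$ as the concrete action $\phi$, and then to read the reductive data off that realization. Since \pref{th:homogeneous-str-timelike} already guarantees that $S_\lambda(t)$ satisfies \eqref{eq:Ambrose-Singer connection}, the converse construction recalled after \pref{df:homogeneous-structure} produces a Lie algebra $\frakg=\frakh\oplus\frakm$ together with a transitive, almost effective isometric action. The remaining work is to match these data with $\SU(1,1)\times U(1)$, the map $\phi$, and the displayed decomposition, and to pin down the one free parameter so that the induced structure is exactly $S_\lambda(t)$.

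First I would verify directly that $\phi$ is a well-defined isometric action. The left $\SU(1,1)$ factor preserves $\bH^3_1\cong\SU(1,1)$ together with the left-invariant metric $g_{\lambda\mu\nu}$, and it already acts transitively. For the right $U(1)$ factor $a=\diag(e^{-\sqrt{-1}s},e^{\sqrt{-1}s})$, the pullback $R_a^*$ acts on the left-invariant coframe through the adjoint action of $a$, which fixes $\theta^0$ and rotates the plane $\vspan_\bR\{\theta^1,\theta^2\}$; hence $R_a^*g_{\lambda\mu\nu}=g_{\lambda\mu\nu}$ precisely because $\mu=\nu$. This is the analytic content of \pref{pr:invariant-metric}: the deformation is admissible exactly along the timelike $X_0$-direction, so the whole group acts by isometries.

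Next I would compute the isotropy at $o=I$. Solving $A_0Za=Z$ at $Z=I$ shows the stabilizer is the circle $\{(\diag(e^{\sqrt{-1}s},e^{-\sqrt{-1}s}),e^{\sqrt{-1}s})\}$, so $H\cong U(1)$; differentiating the combined action $W\mapsto e^{-\sqrt{-1}s}A_0W$ on the first column $W\in\bC^2$ identifies $\SU(1,1)\times U(1)$ with $U(1,1)$ via the homomorphism $(A_0,e^{\sqrt{-1}s})\mapsto e^{-\sqrt{-1}s}A_0$, whose kernel is $\{(I,1),(-I,-1)\}\cong\bZ_2$. This gives almost effectiveness of $\SU(1,1)\times U(1)$ and an effective action of $U(1,1)=(\SU(1,1)\times U(1))/\bZ_2$. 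Under this identification $\frakg=\mathfrak{u}(1,1)$ and $\frakh=\vspan_\bR\{\diag(0,\sqrt{-1})\}=\vspan_\bR\{h\}$, where $\ad_h$ kills $X_0$ and $h$ and rotates $\vspan_\bR\{X_1,X_2\}$. Consequently the displayed $\frakm$, whose first generator is congruent to $X_0$ modulo $\frakh$, is an $\Ad|_H$-invariant complement for \emph{any} coefficient of $h$ in that generator.

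The crux is to pin down that coefficient so that the reductive decomposition induces exactly $S_\lambda(t)$. Here I would use \eqref{eq:nabla-Lie}: transport the basis of $\frakm$ to $T_o\bH^3_1$ by $\tau$, compute $\widetilde\nabla_{X^*}Y^*|_o=-[X,Y]^*_\frakm|_o$ from the brackets of $\mathfrak{u}(1,1)$ and the projection along $\frakh$, and compare $S=\nabla-\widetilde\nabla$ with \eqref{eq:s1}. The coefficient of $\theta^0\otimes\theta^1\wedge\theta^2$ then forces the $h$-coefficient to be $\frac{\lambda-t}{\mu}$; this bookkeeping is the delicate part, but it can be shortened by invoking \pref{th:isom-HS}, since the induced structure must a priori be some $S_\lambda(t')$ and one reads off $t'=t$ from a single component. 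Finally, for $t=\lambda+2\mu$ \pref{th:homogeneous-str-timelike} gives $S_\lambda(t)=S_0$, whose canonical connection is flat; the holonomy algebra $\frakh$ is then trivial, $\frakg=\frakm=\mathfrak{su}(1,1)$, and the action collapses to the transitive, effective left action of $\SU(1,1)$ with trivial reductive decomposition, in agreement with \pref{th:homogeneous-structure-different}.
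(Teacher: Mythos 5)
Your proposal is correct and follows essentially the same strategy as the paper: verify via \pref{pr:invariant-metric} that $\phi$ is isometric (using $\mu=\nu$), compute the isotropy circle and the identification $(\SU(1,1)\times U(1))/\bZ_2\simeq U(1,1)$, exhibit the $\Ad|_H$-invariant complement, and match the induced canonical connection with $\nabla-S_\lambda(t)$ through \eqref{eq:nabla-Lie}; the degenerate case $t=\lambda+2\mu$ is handled identically via the vanishing of the holonomy algebra. The one place where you diverge is the final calibration of the $\frakh$-component of the first generator of $\frakm$: the paper carries this out by expanding the left-invariant frame $\{X_\alpha\}$ in the Killing fields $E_t^*,B_1^*,B_2^*$ with explicit coefficient functions and differentiating them at $o$, whereas you propose to shortcut it by observing that the induced structure must, by \pref{th:homogeneous-str-timelike} and \pref{th:isom-HS}, be some $S_\lambda(t')$, so that a single component fixes $t'=t$. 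That shortcut is legitimate and does save work, but be aware that even one component cannot be read off purely from the brackets of $\fraku(1,1)$: formula \eqref{eq:nabla-Lie} computes $\widetilde\nabla$ on Killing fields, which agree with the left-invariant frame only at $o$ and not to first order, so evaluating $S_{X^*}Y^*|_o=\nabla_{X^*}Y^*|_o+[X,Y]^*_{\frakm}|_o$ still requires knowing the Killing fields (or the coefficient functions relating them to $\{X_\alpha\}$) in a neighborhood of $o$. You flag this as the ``delicate bookkeeping,'' so it is deferred rather than missing, but it is exactly the content of the paper's computations \eqref{eq:f-time}--\eqref{eq:h-time} and cannot be dispensed with entirely.
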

	\begin{remark}
		We identify $ \bH^3_1 $ with $ \SU(1,1) = \lc
		\begin{pmatrix}
			z^1 & z^2 \\ \zbar^2 & \zbar^1	
		\end{pmatrix} \mid (z^1,z^2) \in \bC^2, |z^1|^2 - |z^2|^2 = 1 \rc $ as in \eqref{eq:isom-H^3_1-SU(1,1)}.
		The isotropy group of $ \phi $ at $ o = (1,0)\in \bH^3_1 $ is the following:
		\begin{equation}
			H = \Stab(o)  = \lc \lb
			\begin{pmatrix}
				e^{\sqrt{-1}s} & 0 \\ 0  & e^{-\sqrt{-1}s}
			\end{pmatrix}, e^{\sqrt{-1}s} \rb \mid s \in \bR \rc. 
		\end{equation}
		Since an element $ \lb
		\begin{pmatrix}
			-1 & 0 \\ 0  & -1
		\end{pmatrix}, -1 \rb \in \SU(1,1)\times U(1) $ acts on $ \bH^3_1 $ by $ \phi $ as an identity, this action is not effective.
	\end{remark}
	\begin{proof}
		From \pref{pr:invariant-metric}, in this case, the $ g_{\lambda\mu\nu} $ is $ G $-invariant if and only if $ G =  (\SU(1,1) \times U(1))/\Gamma \simeq U(1,1) $ under the transitive and effective isometric action induced by $ \phi $. 
		
		The non-trivial components of the connection form and the curvature tensor of the canonical connection $ \widetilde\nabla = \nabla - S_\lambda(t) $ are the following:
		\begin{align}
			\omegatilde^1_2 & = -\frac{\lambda +2\mu -t}{\mu}\theta^0 = -\omegatilde^2_1, \label{eq:nablatilde-timelike-1}\\
			\Rtilde(X_1,X_2) & = \frac{2(\lambda+2\mu-t)}{\mu}\lb \theta^1\otimes X_2 -\theta^2\otimes X_1 \rb.\label{eq:Rtilde-timelike}
		\end{align}
		The holonomy Lie algebra of this connection is 
		$ \widetilde\frakh = \vspan_\bR\lc \Rtilde(X_1,X_2) \rc. $
		Hence it vanishes when $ S=S_0 $ (i.e., when $ t=\lambda+2\mu). $ It implies that the isotropy group of the isometries is trivial. 
		
		Hereafter, we assume that  $ t \neq \lambda+2\mu $, then it does not vanish. If we set $ U = \frac{\mu}{\lambda+2\mu-t}\Rtilde(X_1,X_2) = 2(\theta^1\otimes X_2-\theta^2\otimes X_1) $ and $ S = S_\lambda(t), $
		the Lie algebra satisfies the following conditions:
		\begin{align}
			[X_0,X_1] & = S_{X_0}X_1 - S_{X_1}X_0 = \frac{t-\lambda}{\mu}X_2, \\
			[X_1,X_2] & =  S_{X_1}X_2 - S_{X_2}X_1 - \Rtilde(X_1,X_2) = -2X_0 - \frac{\lambda+2\mu-t}{\mu}U, \\
			[X_2,X_0] & =  S_{X_2}X_0 - S_{X_0}X_2 = \frac{t-\lambda}{\mu}X_1,
		\end{align}
		\begin{equation}
			[U,X_0] = 0,\quad [U,X_1] = 2X_2,\quad [U,X_2] = -2X_1.
		\end{equation}
		Then the Lie algebra relations of $ \widetilde\frakg = \vspan_\bR\{\Uhat\}\oplus\vspan_\bR\{\Xhat_\alpha\}_{\alpha=0}^2 $ are given by
		\begin{equation}
			[\Xhat_0, \Xhat_1]= 2\Xhat_2,\ [\Xhat_1, \Xhat_2]= -2\Xhat_0, \ [\Xhat_2, \Xhat_0] = 2\Xhat_1,\ [\Uhat, \Xhat_\alpha]=0,(\alpha=0,1,2)
		\end{equation}
		for
		\begin{align}
			\Uhat & = -\frac{\lambda-t}{2\mu}U -X_0,\label{eq:hatU}\\
			\Xhat_0 & = X_0 + \frac{\lambda-t+2\mu}{2\mu}U,\label{eq:hatX0}\\
			\Xhat_1 & = X_1,\label{eq:hatX1}\\
			\Xhat_2 & = X_2.\label{eq:hatX2}
		\end{align}
		
		On the other hand, we show that the connection $ \nabla - S_\lambda(t) $ coincides with the one described $ \widetilde\nabla_{X^*}Y^*|_{o}=-[X,Y]^*|_{o} $ for $ X,Y \in \frakm $ at $ o = (1,0) \simeq I_2 \in \SU(1,1), $ which is defined below, by the action $ \phi\colon \SU(1,1) \times U(1) \times \bH^3_1 \to \bH^3_1 $.  
		We also see the reductive decomposition $ \widetilde\frakg = \frakh\oplus\frakm. $
		The Lie algebra homomorphism $ \psi_* \colon \mathfrak{su}(1,1)\times \bR \to \fraku(1,1) $ induced by the Lie group homomorphism $ \psi \colon \SU(1,1)\times U(1) \ni (A, e^{\sqrt{-1}s}) \mapsto e^{-\sqrt{-1}s}A \in U(1,1)  $ is the following:
		\begin{align}
			\psi_*(\Xhat_0, 0) & = 
			\begin{pmatrix}
				\sqrt{-1} & 0 \\ 0 & -\sqrt{-1}
			\end{pmatrix}, \\
			\psi_*(\Xhat_1, 0) & = 
			\begin{pmatrix}
				0 & 1 \\ 1 & 0
			\end{pmatrix}, \\
			\psi_*(\Xhat_2, 0) & = 
			\begin{pmatrix}
				0 & \sqrt{-1} \\ -\sqrt{-1} & 0
			\end{pmatrix}, \\
			\psi_*(0, \Uhat) & = 
			\begin{pmatrix}
				-\sqrt{-1} & 0 \\ 0 & -\sqrt{-1}
			\end{pmatrix}.
		\end{align}
		Therefore, vectors
		\begin{align}
			U_0 &\coloneqq \psi_*(\Xhat_0,\Uhat)  = 
			\begin{pmatrix}
				0 & 0 \\ 0 & -2\sqrt{-1}
			\end{pmatrix},\\
			B_\alpha & \coloneqq \psi_*(\Xhat_\alpha,0),(\alpha =0,1,2)\\
			B_3 & \coloneqq \psi_*(0,\Uhat),
		\end{align}
		span $ \fraku(1,1) $.
		The induced Killing vector fields by $ B_0,B_1,B_2 \in \fraku(1,1)$ are generated by the isometric action $ \phi $ from the left.
		Whereas, the induced Killing vector field by $ B_3 \in \fraku(1,1) $ is generated by the right action.
		The direct sum decomposition of the Lie algebra
		$ \fraku(1,1) = \frakm \oplus \frakh, $ consisting of $ \frakh = \vspan_\bR\{U_0\}, $ and $ \frakm = \vspan_\bR\{E_t, B_1,B_2\} $ turns out to be the reductive decomposition via the isomorphism:
		\begin{equation}
			\tau \colon \frakm \ni B \mapsto B^*|_o \in T_{o}\bH^3_1,
		\end{equation}
		where, $ \tau(B_3) = -X_0^*|_{o}, \  \tau(B_\alpha) = X^*_\alpha|_{o},(\alpha=0,1,2) $
		and $ E_t = -\frac{2\mu+\lambda-t}{2\mu}B_3 -\frac{\lambda-t}{2\mu}B_0. $ i.e,
		\begin{equation}
			[U_0,E_t] = 0, \quad [U_0, B_1] = 2B_2, \quad [U_0, B_2] = -2B_1,
		\end{equation}
		\begin{equation}
			[E_t, B_1] = -\frac{\lambda-t}{\mu}B_2, \quad [B_1,B_2] = -2B_0, \quad [B_2,E_t] = -\frac{\lambda-t}{\mu}B_1.
		\end{equation}
		We have a connection $ \widetilde\nabla $ determined by the Killing vector fields generated by the elements in $ \frakm $ that is isomorphic to $ T_o\bH^3_1 $ by $ \tau $ as follows:
		\begin{align}
			\widetilde\nabla_{E_t^*}B_1^*|_{o} & = - [E_t,B_1]^*|_o = \frac{\lambda-t}{\mu}B_2^*|_o = \frac{\lambda-t}{\mu}X_2^*|_o ,\\
			\widetilde\nabla_{E_t^*}B_2^*|_{o} & = - [E_t,B_2]^*|_o = -\frac{\lambda-t}{\mu}B_1^*|_o = -\frac{\lambda-t}{\mu}X_1^*|_o ,\\
			\widetilde\nabla_{B_1^*}B_2^*|_{o} & = - [B_1,B_2]^*|_o = 2B_0^*|_o = 2X_0^*|_o.
		\end{align}
		Then we expand the left-invariant vector fields $ \{X_\alpha\}_{\alpha=0}^2 $ using the Killing vector fields generated by the elements in $ \frakm $ under the action $ \phi. $
		Let $ \{f_\alpha, g_\beta,h_\gamma\}_{\alpha,\beta,\gamma=t,1,2} \subset C^\infty(\bH^3_1)$ be coeficients as follows:
		\begin{align}
			X_0|_{(z^1,z^2)} & = f_tE^*_t|_{(z^1,z^2)}+f_1B^*_1|_{(z^1,z^2)}+f_2B^*_2|_{(z^1,z^2)},\\
			X_1|_{(z^1,z^2)} & = g_tE^*_t|_{(z^1,z^2)}+g_1B^*_1|_{(z^1,z^2)}+g_2B^*_2|_{(z^1,z^2)},\\
			X_2|_{(z^1,z^2)} & = h_tE^*_t|_{(z^1,z^2)}+h_1B^*_1|_{(z^1,z^2)}+h_2B^*_2|_{(z^1,z^2)}.
		\end{align}
		\begin{align}
			f_t(z^1,z^2) & = \frac{(|z^2|^2+|z^1|^2)\mu}{(\lambda-t+\mu)|z^2|^2+\mu|z^1|^2},  \notag\\
			f_1(z^1,z^2) & = \frac{\sqrt{-1}(\lambda-t)(z^1z^2-\zbar^1\zbar^2)}{2\{(\lambda-t+\mu)|z^2|^2+\mu|z^1|^2\}},\label{eq:f-time}\\
			f_2(z^1,z^2) & = \frac{(\lambda-t)(z^1z^2 + \zbar^1\zbar^2)}{2\{(\lambda-t+\mu)|z^2|^2+\mu|z^1|^2\}}.\notag
		\end{align}
		\begin{align}
			g_t(z^1,z^2) & = \frac{\sqrt{-1}(z^1\zbar^2-\zbar^1z^2)\mu}{(\lambda-t+\mu)|z^2|^2-\mu|z^1|^2},\notag\\
			g_1(z^1,z^2) & = \frac{(\lambda-t+\mu)\{(z^2)^2+(\zbar^2)^2\} + \mu\{(z^1)^2+(\zbar^1)^2\}}{2\{(\lambda-t+\mu)|z^2|^2+\mu|z^1|^2\}},\label{eq:g-time}\\
			g_2(z^1,z^2) & =-\frac{\sqrt{-1}\{(\lambda-t+\mu)\{(z^2)^2-(\zbar^2)^2\} + \mu\{(z^1)^2-(\zbar^1)^2\}\}}{2\{(\lambda-t+\mu)|z^2|^2+\mu|z^1|^2\}}.\notag
		\end{align}
		\begin{align}
			h_t(z^1,z^2) & = -\frac{(z^2\overline{z}^1+z^1\overline{z}^2)\mu}{(\lambda-t+\mu)|z^2|^2+\mu|z^1|^2},\notag\\
			h_1(z^1,z^2) & = -\frac{\sqrt{-1}\{(\lambda-t+\mu)\{(z^2)^2-(\zbar^2)^2\} - \mu\{(z^1)^2-(\zbar^1)^2\}\}}{2\{(\lambda-t+\mu)|z^2|^2+\mu|z^1|^2\}},\label{eq:h-time}\\
			h_2(z^1,z^2) & = -\frac{\mu\{(z^1)^2 + (\zbar^1)^2\}-(\lambda-t+\mu)\{(z^2)^2+(\zbar^2)^2\}}{2\{(\lambda-t+\mu)|z^2|^2+\mu|z^1|^2\}}.\notag
		\end{align}
		Then we calculate the connection $ \widetilde\nabla $ for the left-invariant vector fields $ \{X_\alpha\}_{\alpha=0}^2 $ at $ o \in \bH^3_1. $
		\begin{align}
			\widetilde\nabla_{X_0}X_1|_o & = E^*_t(g_t)B_0^*|_o + E^*_t(g_1)B_1^*|_o + E^*_t(g_2)B_2^*|_o + \widetilde\nabla_{E_t^*}B_1^*|_o \\
			& = 2B^*_2|_o + \frac{\lambda-t}{\mu}B^*_2|_o\notag \\
			& = \frac{\lambda-t+2\mu}{\mu}X_2|_o,\notag\\
			\widetilde\nabla_{X_0}X_2|_o & = E^*_t(h_t)E_t^*|_o + E^*_t(h_1)B_1^*|_o + E^*_t(h_2)B_2^*|_o + \widetilde\nabla_{E_t^*}B_2^*|_o \\
			& = -2B^*_1|_o - \frac{\lambda-t}{\mu}B^*_1|_o \notag \\
			& = -\frac{\lambda-t+2\mu}{\mu}X_1|_o.\notag
		\end{align}
		The others all vanish.
		It implies that the connection $ \widetilde\nabla $ defined by \eqref{eq:nabla-Lie} coincides with $ \nabla-S_\lambda(t). $
	\end{proof}	
	
	\subsection{The case of \( - \lambda = \nu \neq \mu \)}\label{sc:space1}
	In this case, the metric $ g_{\lambda\mu\nu} $ is deformed along the spacelike Hopf fiber $ \bR\hookrightarrow \bH^3_1 \to \AdS_2. $ We now choose a direction of the Hopf fiber along $ X_1.$
	
	\begin{theorem}\label{th:homogeneous-str-spacelike}
		If $ -\lambda = \nu \neq \mu, $ any homogeneous structure $ S $ on the Lorentzian manifold $ (\bH^3_1, g_{\lambda\mu\nu}) $ is given by the following $ S = S_\mu(t) $ or $ S_0: $ 
		\begin{equation}
			S_\mu(t) = \mu\theta^2\otimes\theta^0\wedge\theta^1 + \mu\theta^0\otimes\theta^1\wedge\theta^2 + t\theta^1\otimes\theta^2\wedge\theta^0, \label{eq:s3}
		\end{equation}
		where $ 2\nu-\mu \neq t \in \bR$ is a constant. Moreover, $ S_\mu(t) $ coincides with $ S_0 $ by substituting $ t = 2\nu-\mu.$ 
	\end{theorem}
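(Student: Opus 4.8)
The plan is to mirror the argument of \pref{th:homogeneous-str-timelike}, interchanging the role of the timelike fibre direction $X_0$ with that of the spacelike generator $X_1$. First I would write a general homogeneous structure $S$ in the form \eqref{eq:homogeneous-str}, with coefficients $\rho_\alpha,\sigma_\alpha,\tau_\alpha\in C^\infty(\bH^3_1)$, and substitute the specialization $-\lambda=\nu\neq\mu$ into the Levi-Civita connection forms \eqref{eq:connection-form-general} and the curvature \eqref{eq:curvature-form-general}. The relation $\lambda+\nu=0$ collapses most of the mixed rational coefficients; in particular $\lambda+\mu+\nu$ reduces to $\mu$ and $-(\lambda-\mu+\nu)$ reduces to $\mu$, which is what will eventually pin down the two surviving constants $\rho_2=\sigma_0=\mu$.

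Next I would impose $\widetilde\nabla R=0$ for the canonical connection $\widetilde\nabla=\nabla-S$. Since $g_{\lambda\mu\nu}$, $\nabla$, and $R$ are all left-invariant, this tensorial identity reduces at each point to a system of \emph{linear} equations in the pointwise values $\rho_\alpha,\sigma_\alpha,\tau_\alpha$, obtained by expanding $\nabla_{X_\beta}R=S_{X_\beta}\cdot R$ in the frame $\{X_\alpha\}$. By exact analogy with the timelike case, where $\widetilde\nabla R=0$ fully determined $\rho$ and $\tau$ while leaving $\sigma$ free, I expect these equations here to fix $\rho$ and $\sigma$ completely, giving $\rho_0=\rho_1=\sigma_1=\sigma_2=0$ and $\rho_2=\sigma_0=\mu$, and to leave $\tau=\tau_0\theta^0+\tau_1\theta^1+\tau_2\theta^2$ undetermined. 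After this step one has $S=\mu\theta^2\otimes\theta^0\wedge\theta^1+\mu\theta^0\otimes\theta^1\wedge\theta^2+\tau\otimes\theta^2\wedge\theta^0$.

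With $\rho$ and $\sigma$ fixed, the remaining condition $\widetilde\nabla S=0$ becomes a first-order PDE system for the three functions $\tau_0,\tau_1,\tau_2$ along $\{X_\alpha\}$, structurally identical to the system $\mu X_\alpha(\sigma_\beta)=\cdots$ appearing in \pref{th:homogeneous-str-timelike}. I would solve it by the same bootstrap: first show that $\tau_1$ (the component along the fibre direction $X_1$) is annihilated by every $X_\alpha$, hence constant, and then use the remaining quadratic equations together with the $\mathfrak{su}(1,1)$-integrability forced by \eqref{eq:Lie-alg-su(1,1)} to conclude $\tau_0=\tau_2\equiv 0$. Setting $\tau_1=t$ then yields $S=S_\mu(t)$ as in \eqref{eq:s3}. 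Finally, substituting $t=2\nu-\mu$ and using $\lambda=-\nu$ gives $-(\lambda-\mu+\nu)=\mu$, $\lambda+\mu+\nu=\mu$, and $-(\lambda+\mu-\nu)=2\nu-\mu$, so that $S_\mu(2\nu-\mu)$ coincides with $S_0$ of \eqref{eq:homogeneous-str-different}, recovering the symmetric endpoint.

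The main obstacle is the nonlinear PDE system coming from $\widetilde\nabla S=0$, namely ruling out non-constant solutions for $\tau_0$ and $\tau_2$. As in the timelike argument this cannot be read off termwise; the cleanest route is to combine the equations into an appropriate invariant combination of $\tau_0$ and $\tau_2$ (adapted to the Lorentzian signature, in which $\theta^0$ is timelike), compute its frame derivatives, and then exploit the commutator consistency forced by the bracket relations to force $\tau_0=\tau_2\equiv 0$ with $\tau_1$ constant. The preceding $\widetilde\nabla R=0$ step, by contrast, is a lengthy but purely mechanical linear computation, whose only real subtlety is verifying that the degeneracy $-\lambda=\nu$ leaves exactly the one-dimensional freedom $\tau_1$ corresponding to deformation along the spacelike Hopf fibre.
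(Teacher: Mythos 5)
Your proposal follows the paper's proof essentially verbatim: impose $\widetilde\nabla R=0$ to obtain $\rho_0=\rho_1=\sigma_1=\sigma_2=0$ and $\rho_2=\sigma_0=\mu$, then use $\widetilde\nabla S=0$ to force $\tau_0=\tau_2=0$ with $\tau_1=t$ constant, and finally check that $t=2\nu-\mu$ recovers $S_0$. The only difference is that you sketch in more detail how the nonlinear system from $\widetilde\nabla S=0$ would be resolved, a step the paper states without elaboration.
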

	\begin{proof}
		We have the following conditions from $ \widetilde\nabla R = 0 $ that the homogeneous structures $ S $ should be satisfied:
		\begin{equation}
			\rho_0 = \rho_1 = \sigma_1 = \sigma_2 = 0,\quad \sigma_0 = \rho_2 = \mu.
		\end{equation}
		Then we have
		$ S = \mu\theta^2\otimes\theta^0\wedge\theta^1 + \mu\theta^0\otimes\theta^1\wedge\theta^2 + \tau\otimes\theta^2\wedge\theta^0.$
		Therefore, from $ \widetilde\nabla S = 0, $ we obtain $ \tau_0=\tau_2=0, $ and by setting $ \tau_1 = t $ as a constant, we obtain $ S = S_\mu(t). $
	\end{proof}
	Thus, by an argument similar to that in \pref{th:isometry-timelike-1}, we obtain the following result.
	\begin{theorem}\label{th:isometry-spacelike-1}
		Assume that $ -\lambda = \nu \neq \mu. $
		If $ t \neq 2\nu - \mu, $ the isometry group $ \SU(1,1) \times \bR $ of the Lorentzian manifold $ (\bH^3_1, g_{\lambda\mu\nu}) $ acts transitively and effectively on it via the map:
		\begin{equation}\label{eq:R-action}
			\phi\colon \SU(1,1) \times \bR \times \bH^3_1 \ni \lb (A_0, s), Z \rb \mapsto A_0 Z 
			\begin{pmatrix}
				\cosh{s} & -\sinh{s} \\
				-\sinh{s} & \cosh{s}
			\end{pmatrix} 
			\in \bH^3_1.
		\end{equation}
		This leads the following reductive decomposition
		$ \bR \oplus \mathfrak{su}(1,1) = \frakh \oplus \frakm $ given by
		\begin{align}
			\frakh & = \vspan_\bR\lc 
			\begin{pmatrix}
				0 & 1 \\
				1 & 0
			\end{pmatrix} + D \rc,\\ 
			\frakm & = \vspan_\bR\lc \begin{pmatrix}
				\sqrt{-1} & 0 \\ 0 & -\sqrt{-1}
			\end{pmatrix}, \frac{\mu+t}{2\nu}\begin{pmatrix}
				0 & 1 \\ 1 & 0
			\end{pmatrix} - \frac{2\nu -\mu -t}{2\nu}D, \begin{pmatrix}
				0 & \sqrt{-1} \\ -\sqrt{-1} & 0
			\end{pmatrix} \rc,
		\end{align}
		where $ D $ is a generator of $ \bR $ that is commutative with any element in $ \mathfrak{su}(1,1), $
		induced by the homogeneous structure $ S = S_\mu(t). $
		
		If  $ t = 2\nu - \mu, \ \SU(1,1) $ transitively and effectively acts on $ \bH^3_1 $ as an isometry group from the left with the trivial reductive decomposition.
	\end{theorem}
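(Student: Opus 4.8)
The plan is to run the same argument as in Theorem \ref{th:isometry-timelike-1}, now for the spacelike homogeneous structure $S_\mu(t)$ under the constraint $-\lambda=\nu$, while carefully tracking the fact that the chosen fiber direction $X_1$ is spacelike, so that the relevant one-parameter subgroup consists of boosts rather than rotations. First I would invoke Proposition \ref{pr:invariant-metric} to confirm that $g_{\lambda\mu\nu}$ is invariant under the action $\phi$ in \eqref{eq:R-action}: because the deformation is along the spacelike Hopf fiber in the $X_1$-direction, the right factor is the group of hyperbolic rotations $\begin{pmatrix} \cosh s & -\sinh s \\ -\sinh s & \cosh s \end{pmatrix}$, and the subgroup $H$ is the noncompact $\bR$ rather than the compact $U(1)$ of the timelike case.

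Next I would compute the nonzero components of the connection form $\omegatilde$ and curvature $\Rtilde$ of the canonical connection $\widetilde\nabla=\nabla-S_\mu(t)$, substituting $\lambda=-\nu$ into \eqref{eq:connection-form-general}, \eqref{eq:curvature-form-general} and using \eqref{eq:s3}. I expect the single surviving curvature block to lie in the $\{X_0,X_2\}$-plane orthogonal to the fiber and to be proportional to $2\nu-\mu-t$, so that the holonomy algebra $\widetilde\frakh=\vspan_\bR\{\Rtilde(X_2,X_0)\}$ vanishes exactly when $t=2\nu-\mu$, i.e.\ when $S_\mu(t)=S_0$. This at once settles the degenerate case: trivial isotropy and a transitive effective $\SU(1,1)$-action from the left with trivial reductive decomposition.

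Assuming $t\neq 2\nu-\mu$, I would then carry out the Ambrose--Singer construction from the preliminaries: evaluate $[X_\alpha,X_\beta]=S_{X_\alpha}X_\beta-S_{X_\beta}X_\alpha-\Rtilde(X_\alpha,X_\beta)$ together with the brackets involving the holonomy generator $U$, and find the linear change of basis $\{\Uhat,\Xhat_\alpha\}$ making $\Uhat$ central. Because the holonomy generator is now a boost instead of a rotation, this splitting identifies the abstract algebra $\widetilde\frakg$ with $\mathfrak{su}(1,1)\oplus\bR$, whose connected simply connected group is $\SU(1,1)\times\bR$; in contrast to the timelike case, the noncompactness of the extra factor makes the action effective without passing to a discrete quotient. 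I would then read off the reductive decomposition $\bR\oplus\mathfrak{su}(1,1)=\frakh\oplus\frakm$, with $\frakh=\vspan_\bR\{X_1+D\}$ and $\frakm$ as listed, and check that $\frakm$ is $\Ad|_H$-invariant.

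Finally, I would verify that the invariant connection determined by $\widetilde\nabla_{X^*}Y^*|_o=-[X,Y]^*|_o$ for $X,Y\in\frakm$ agrees with $\nabla-S_\mu(t)$ at $o=(1,0)$, by expanding the left-invariant fields $\{X_\alpha\}$ in the Killing fields generated by $\frakm$ (in complete analogy with \eqref{eq:f-time}--\eqref{eq:h-time}) and differentiating. The main obstacle is the bookkeeping forced by the noncompact structure: one must confirm that the holonomy generator is genuinely hyperbolic so that $H=\bR$, that the central generator $D$ pairs with the boost $X_1$ inside $\frakh$, and that the constraint $2\nu-\mu-t\neq 0$ is used so that the abstract group is really $\SU(1,1)\times\bR$ with an \emph{effective} action. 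This is precisely the point where the spacelike case diverges qualitatively from the timelike one and cannot be copied verbatim.
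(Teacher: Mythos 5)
Your proposal follows the paper's proof essentially step for step: invoking \pref{pr:invariant-metric} for invariance, computing that the holonomy of $\widetilde\nabla=\nabla-S_\mu(t)$ is spanned by the boost $\Rtilde(X_2,X_0)\propto(2\nu-\mu-t)(\theta^2\otimes X_0+\theta^0\otimes X_2)$ (hence trivial exactly when $t=2\nu-\mu$), running the Ambrose--Singer construction with the change of basis that centralizes $\Uhat$ to identify $\widetilde\frakg\cong\mathfrak{su}(1,1)\oplus\bR$, reading off the reductive decomposition, and finally verifying $\widetilde\nabla_{X^*}Y^*|_o=-[X,Y]^*|_o$ against $\nabla-S_\mu(t)$ by expanding the left-invariant frame in the Killing fields of $\frakm$. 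Your observation that the noncompactness of the $\bR$-factor is what upgrades ``almost effectively'' to ``effectively'' relative to the timelike case is also the point the paper makes, so the approach is correct and essentially identical.
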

	
	\begin{proof}
		From \pref{pr:invariant-metric}, in this case, the $ g_{\lambda\mu\nu} $ is $ G $-invariant if and only if $ G = \SU(1,1) \times \bR $ under the transitive and effective isometric action induced by $ \phi $.
		
		The non-trivial components of the connection form and the curvature tensor of the canonical connection $ \widetilde\nabla = \nabla - S_\mu(t) $ are the following:
		\begin{align}
			\omegatilde^0_2 & = -\frac{2\nu - \mu -t}{\nu}\theta^1 = \omegatilde^2_0,\label{eq:nablatilde-spacelike-1}\\
			\Rtilde(X_2,X_0) & = \frac{2(2\nu - \mu -t)}{\nu}\lb \theta^2\otimes X_0 +\theta^0\otimes X_2 \rb.\label{eq:Rtilde-spacelike}
		\end{align}
		The holonomy Lie algebra of this connection is 
		$ \widetilde\frakh = \vspan_\bR\lc \Rtilde(X_2,X_0) \rc. $
		Hence, it vanishes when $ S=S_0 $ (i.e., when $ t=2\nu - \mu). $ It implies that isotropy group of isometries is trivial.
		Hereafter, we assume that $ t \neq 2\nu - \mu, $ then it does not vanish. 
		If we set $ U = \frac{\nu}{2\nu - \mu - t}\Rtilde(X_2,X_0) = 2(\theta^2\otimes X_0 + \theta^0\otimes X_2) $ and $ S= S_\mu(t), $ the Lie algebra satisfies the following conditions: 
		\begin{align}
			[X_0,X_1] & = S_{X_0}X_1 - S_{X_1}X_0 = \frac{\mu + t}{\nu}X_2, \\
			[X_1,X_2] & =  S_{X_1}X_2 - S_{X_2}X_1 = -\frac{\mu + t}{\nu}X_0, \\
			[X_2,X_0] & =  S_{X_2}X_0 - S_{X_0}X_2 - \Rtilde(X_2,X_0) = 2X_1 -\frac{2\nu - \mu - t}{\nu}U,
		\end{align}
		\begin{equation}
			[U,X_0] = 2X_2,\quad [U,X_1] = 0,\quad [U,X_2] = 2X_1.
		\end{equation}
		Then the Lie algebra relations of $ \widetilde\frakg = \vspan_\bR\{\Uhat\}\oplus\vspan_\bR\{\Xhat_\alpha\}_{\alpha=0}^2 $ are given by 
		\begin{equation}
			[\Xhat_0, \Xhat_1]= 2\Xhat_2,\ [\Xhat_1, \Xhat_2]= -2\Xhat_0, \ [\Xhat_2, \Xhat_0] = 2\Xhat_1,\ [\Uhat, \Xhat_\alpha]=0,(\alpha=0,1,2)
		\end{equation}
		for
		\begin{align}
			\Uhat & = -\frac{\mu + t}{2\nu}U - X_1,\label{eq:hatU-R1}\\
			\Xhat_0 & = X_0 ,\label{eq:hatX0-R1}\\
			\Xhat_1 & = X_1 - \frac{2\nu - \mu - t}{2\nu}U,\label{eq:hatX1-R1}\\
			\Xhat_2 & = X_2.\label{eq:hatX2-R1}
		\end{align}
		
		On the other hand, we show that the connection $ \nabla-S_\mu(t) $ coincides with the one described $ \widetilde\nabla_{X^*}Y^*|_{o}=-[X,Y]^*|_{o} $ for $ X,Y \in \frakm, $ which is defined below, by the action $ \phi \colon \SU(1,1) \times \bR \times \bH^3_1 \to \bH^3_1. $ We also see the reductive decomposition $ \widetilde\frakg = \frakh\oplus\frakm. $
		We identify $ \{\Uhat, \Xhat_0, \Xhat_1, \Xhat_2\} $ with the basis of the Lie algebra $ \mathfrak{su}(1,1)\oplus \bR, $ that is,
		\begin{align}
			-U_1 & = -\Uhat - \Xhat_1 = -D +
			\begin{pmatrix}
				0 & -1 \\ -1 & 0
			\end{pmatrix}, \\
			B_0 & = \Xhat_0 = 
			\begin{pmatrix}
				\sqrt{-1} & 0 \\ 0 & -\sqrt{-1}
			\end{pmatrix}, \\
			B_1 & = \Xhat_1 = 
			\begin{pmatrix}
				0 & 1 \\ 1 & 0
			\end{pmatrix}, \\
			B_2 & = \Xhat_2 =
			\begin{pmatrix}
				0 & \sqrt{-1} \\ -\sqrt{-1} & 0
			\end{pmatrix}, \\
			E_t & = X_1 = \frac{\mu+t}{2\nu}\Xhat_1 + \frac{\mu+t-2\nu}{2\nu}\Uhat\\ & = 
			\begin{pmatrix}
				0 & \frac{\mu+ t}{2\nu} \\ \frac{\mu+ t}{2\nu} & 0
			\end{pmatrix} - \frac{2\nu -\mu -t}{2\nu}D.\notag
		\end{align}
		
		The reductive decomposition of the Lie algebra $ \widetilde\frakg = \frakh\oplus\frakm $ is the following:
		\begin{align*}
			\frakh & = \vspan_\bR\{U_1\} = \vspan_\bR\lc 
			\begin{pmatrix}
				0 & 1 \\ 1 & 0
			\end{pmatrix} + D \rc,\\
			\frakm & = \vspan_\bR\{ B_0,E_t,B_2\}\\
			& = \vspan_\bR\lc \begin{pmatrix}
				\sqrt{-1} & 0 \\ 0 & -\sqrt{-1}
			\end{pmatrix}, \frac{\mu+t}{2\nu}\begin{pmatrix}
				0 & 1 \\ 1 & 0
			\end{pmatrix} - \frac{2\nu -\mu -t}{2\nu}D, \begin{pmatrix}
				0 & \sqrt{-1} \\ -\sqrt{-1} & 0
			\end{pmatrix} \rc.
		\end{align*}
		We also use the Lie algebra isomorphism $ \tau \colon \frakm \ni B \mapsto B^*|_o \in T_{o}\bH^3_1 $ at $ o = (1,0) \simeq I_2 \in \SU(1,1).  $
		Then we have
		\begin{equation}
			\tau(B_\alpha) = X^*_\alpha|_{o},(\alpha=0,1,2) \quad \tau(D) = -X_1^*|_{o}, 
		\end{equation}
		\begin{equation}
			[U_1,B_0] = -2B_2, \quad [U_1, E_t] = 0, \quad [U_1, B_2] = -2B_0,
		\end{equation}
		and the remaining conditions of the Lie algebra relation are
		\begin{equation}
			[B_0, E_t] = \frac{\mu+t}{\nu}B_2, \quad [E_t,B_2] = -\frac{\mu+t}{\nu}B_0, \quad [B_2,B_0] = 2B_1.
		\end{equation}
		We have a connection $ \widetilde\nabla $ determined by the Killing vector fields generated by the element of $ \frakm $ at $ o \in \bH^3_1 $ as follows:
		\begin{align}
			\widetilde\nabla_{E_t^*}B_0^*|_{o} & = - [E_t,B_0]^*|_o = \frac{\mu + t}{\nu}B_2^*|_o = \frac{\mu + t}{\nu}X_2^*|_o ,\\
			\widetilde\nabla_{E_t^*}B_2^*|_{o} & = - [E_t,B_2]^*|_o = \frac{\mu + t}{\nu}B_0^*|_o = \frac{\mu + t}{\nu}X_0^*|_o ,\\
			\widetilde\nabla_{B_2^*}B_0^*|_{o} & = - [B_2,B_0]^*|_o = -2B_1^*|_o = -2X_1^*|_o.
		\end{align}
		We expand the left-invariant vector fields $ \{X_\alpha\}_{\alpha=0}^2 $ using the Killing vector fields generated by the elements in $ \frakm $ under the action $\phi. $
		Let $ \{f_\alpha, g_\beta,h_\gamma\}_{\alpha,\beta,\gamma=0,t,2} \subset C^\infty(\bH^3_1)$ be the coeficients as follows:
		\begin{align}
			X_0|_{(z^1,z^2)} & = f_0B^*_0|_{(z^1,z^2)}+f_tE^*_t|_{(z^1,z^2)}+f_2B^*_2|_{(z^1,z^2)},\\
			X_1|_{(z^1,z^2)} & = g_0B^*_0|_{(z^1,z^2)}+g_tE^*_t|_{(z^1,z^2)}+g_2B^*_2|_{(z^1,z^2)},\\
			X_2|_{(z^1,z^2)} & = h_0B^*_0|_{(z^1,z^2)}+h_tE^*_t|_{(z^1,z^2)}+h_2B^*_2|_{(z^1,z^2)}.
		\end{align}
		\begin{align}
			f_0(z^1,z^2) & = \frac{((z^1)^2 + (z^2)^2 + (\zbar^1)^2 + (\zbar^2)^2)(t-2\nu+\mu)-2(\mu+t)(|z^1|^2 + |z^2|^2)}{\lc(z^1)^2 - (z^2)^2 + (\zbar^1)^2 - (\zbar^2)^2\rc(t-2\nu+\mu)-2(\mu+t)},  \notag\\
			f_t(z^1,z^2) & = \frac{4\sqrt{-1}\nu(z^1z^2-\zbar^1\zbar^2)}{\lc(z^1)^2 - (z^2)^2 + (\zbar^1)^2 - (\zbar^2)^2\rc(t-2\nu+\mu)-2(\mu+t)},\label{eq:f-space}\\
			f_2(z^1,z^2) & = \frac{2\lc (\mu+t)(z^1z^2+\zbar^1\zbar^2-\zbar^1z^2-z^1\zbar^2) +2\nu(z^1\zbar^2 + \zbar^1z^2) \rc}{\lc(z^1)^2 - (z^2)^2 + (\zbar^1)^2 - (\zbar^2)^2\rc(t-2\nu+\mu)-2(\mu+t)}.\notag
		\end{align}
		\begin{align}
			g_0(z^1,z^2) & = -\frac{2\sqrt{-1}(\mu+t)(z^1\zbar^2-\zbar^1z^2)}{\lc(z^1)^2 - (z^2)^2 + (\zbar^1)^2 - (\zbar^2)^2\rc(t-2\nu+\mu)-2(\mu+t)},\notag\\
			g_t(z^1,z^2) & = -\frac{2\nu \lc (z^1)^2 - (z^2) + (\zbar^1)^2 - (\zbar^2)^2 \rc}{\lc(z^1)^2 - (z^2)^2 + (\zbar^1)^2 - (\zbar^2)^2\rc(t-2\nu+\mu)-2(\mu+t)},\label{eq:g-space}\\
			g_2(z^1,z^2) & = \frac{\sqrt{-1}(\mu+t)\lc (z^1)^2 - (z^2)^2 + (\zbar^1)^2 - (\zbar^2)^2 \rc}{\lc(z^1)^2 - (z^2)^2 + (\zbar^1)^2 - (\zbar^2)^2\rc(t-2\nu+\mu)-2(\mu+t)}.\notag
		\end{align}
		\begin{align}
			h_0(z^1,z^2) & = -\frac{2\lc(\mu+t)(z^1z^2 + \zbar^1\zbar^2 -z^1\zbar^2 - \zbar^2z^1) - 2\nu(z^1z^2+\zbar^1\zbar^2)\rc}{\lc(z^1)^2 - (z^2)^2 + (\zbar^1)^2 - (\zbar^2)^2\rc(t-2\nu+\mu)-2(\mu+t)},\notag\\
			h_t(z^1,z^2) & = -\frac{2\sqrt{-1}\nu \lc (z^1)^2 + (z^2)^2 - (\zbar^1)^2 - (\zbar^2)^2 \rc}{\lc(z^1)^2 - (z^2)^2 + (\zbar^1)^2 - (\zbar^2)^2\rc(t-2\nu+\mu)-2(\mu+t)},\label{eq:h-space}\\
			h_2(z^1,z^2) & = -\frac{2(2\nu-\mu - t)(|z^1|^2 + |z^2|^2) +(\mu + t )\lc (z^1)^2 + (z^2)^2 + (\zbar^1)^2 + (\zbar^2)^2 \rc}{\lc(z^1)^2 - (z^2)^2 + (\zbar^1)^2 - (\zbar^2)^2\rc(t-2\nu+\mu)-2(\mu+t)}.\notag
		\end{align}
		Then we calculate the connection $ \widetilde\nabla $ for the left-invariant vector fields $ \{X_\alpha\}_{\alpha=0}^2 $ at $ o\in \bH^3_1. $
		\begin{align}
			\widetilde\nabla_{X_1}X_2|_o & = E^*_t(h_0)B_0^*|_o + E^*_t(h_t)B_1^*|_o + E^*_t(h_2)B_2^*|_o + \widetilde\nabla_{E_t^*}B_2^*|_o \\
			& = -2B^*_0|_o + \frac{\mu+t}{\nu}B^*_0|_o\notag \\
			& = \frac{\mu + t-2\nu}{\nu}X_0|_o,\notag\\
			\widetilde\nabla_{X_1}X_0|_o & = E^*_t(f_0)B_0^*|_o + E^*_t(f_t)E_t^*|_o + E^*_t(f_2)B_2^*|_o + \widetilde\nabla_{E_t^*}B_2^*|_o \\
			& = -2B^*_2|_o + \frac{\mu + t}{\nu}B^*_2|_o \notag \\
			& = \frac{\mu + t-2\nu}{\nu}X_2|_o.\notag
		\end{align}
		The others all vanish. It implies that the connection $ \widetilde\nabla $ defined by \eqref{eq:nabla-Lie} coincides with $ \nabla - S_\mu(t). $
	\end{proof}

	\subsection{The case of \(-\lambda=\mu\neq\nu\)}\label{sc:space2}
	We have left the classification of the homogeneous structures when the metric $ g_{\lambda\mu\nu} $ is deformed along the other spacelike Hopf fiber direction but these are isomorphic to the one obtained in the previous \pref{sc:space1} due to the symmetry between spacelike direction of $\AdS_3 $. If $ -\lambda = \mu \neq \nu, $
	 we obtain the following homogeneous structure $S = S_\nu(t)(\simeq S_\mu(t)) $, which coincides with $ S_0 $ by substituting $ t=2\mu-\nu. $
	\begin{equation}\label{eq:s-nu-t}
		S_\nu(t) = t\theta^2\otimes\theta^0\wedge\theta^1 + \nu\theta^0\otimes\theta^1\wedge\theta^2 + \nu\theta^1\otimes\theta^2\wedge\theta^0.(t \neq 2\mu-\nu)
	\end{equation}
	The isometry group of the Lorentzian manifold $ (\bH^3_1,g_{\lambda\mu\nu}) $ is $ \SU(1,1)\times \bR $ when $ S=S_\nu(t) $ or $\SU(1,1) $ when $ S=S_0 $ respectively.   
	An isomorphism $ \varphi $ between two homogeneous structure tensors $ \varphi^*S_\mu(t) = S_\nu(t) $ is given as follows:
	\begin{equation}\label{eq:exchange-isom}
		\varphi(X_0) = X_0, \ \varphi(X_1) = - X_2, \ \varphi(X_2) = X_1, \ \varphi(U_1) = - U_2.
	\end{equation}
	
	Using same arguments and notations in later \pref{sc:HApCM}, we have a non-trivial homogeneous almost paracontact metric structure $ (S_\nu(t), \phitilde_2,\xi_2,\eta_2 ) $ on the Lorentzian $ -\frac{\sqrt{\nu}}{\mu} $-paraSasakian manifold if $t \neq 2\mu-\nu$,
	where $ (\phitilde_2,\xi_2,\eta_2 ) = \lb \Xbar_1\otimes\thetabar^0 +  \Xbar_0\otimes\thetabar^1, \Xbar_2, \thetabar^2 \rb $ with the same group of isometries. 
	
	\subsection{The case of \( -\lambda=\mu=\nu \)}\label{sc:symmetric}
	In this case, the metrics of Kaluza-Klein type on $ \AdS_3 $ form a one-parameter family of Lorentzian metrics that are homothetic to the standard metric $\iota^*g_0 $ on $ \AdS_3. $ 
	Since the standard $ \AdS_3 $ has an isometry group $ \SO_0(2,2) \simeq (\SU(1,1)\times \SU(1,1))/\bZ_2, $ then the following isometric action is transitive and almost effective.
	\begin{equation}\label{eq:ful-isometry group action}
		\phi \colon\SU(1,1)_{\text{L}}\times \SU(1,1)_{\text{R}} \times \bH^3_1 \ni ((g_L,g_R), Z) \mapsto g_L Z g_R^\dagger \in \bH^3_1.
	\end{equation}
	Let $ \mathfrak{so}(2,2) = \mathfrak{su}(1,1)_{\text{L}}\oplus \mathfrak{su}(1,1)_{\text{R}} $ be the Lie algebra.
	Then $ U_0 = (X_0,X_0), \ U_1 = (X_1,-X_1), $ and $ U_2 = (X_2, -X_2) $ in $ \mathfrak{so}(2,2) $ generate a Lie subalgebra $ \frakh_o = \vspan_\bR\{U_i\}_{i=0}^2, $ which is isomorphic to $ \mathfrak{su}(1,1), $ of the stabilizer group at $ o \in \bH^3_1. $
	Then each $ B_i = (X_i, 0) $ and $ D_i = (0, X_i) $ acts on $ T_o\bH^3_1 $ from the left and right respectively, i.e.
	\begin{equation}
		B_i^*|_o = \left.\frac{d}{dt}\exp(tX_i)\cdot o\right|_{t=0} =X_i^*|_o = X_i, \ D_i^*|_o = \left.\frac{d}{dt}o\cdot\exp(tX_i^\dagger)\right|_{t=0} = {X^\dagger_i}^{\#}|_o = X^\dagger_i,
	\end{equation}
	for $ i=0,1,2. $
	We give the following lemma about the Lie algebla of this action.
	\begin{lemma}\label{lm:so(2,2)-Lie algebra}
		Assume that $ -\lambda=\mu=\nu=1. $
		Let $ \frakh \subset \frakh_0 $ be a Lie subalgebra and let $ \frakm \subset \mathfrak{so}(2,2) $ be a $ \frakh $-invariant complement.
		Then any non-trivial reductive decomposition $ \frakg = \frakm \oplus \frakh $ that is linearly isomorphic to $ \mathfrak{su}(1,1)_{\text{L}}\oplus \frakh $ is one of the following up to isomorphism in the sense of  \pref{th:isom-HS}.
		\begin{enumerate}
			\item If a Lie subalgebra $ \mathfrak{su}(1,1)_{\text{L}}\oplus \bR = \frakm\oplus\frakh \subset \mathfrak{so}(2,2) $ is a reductive decomposition, it is isomorphic to one of the following:
			\begin{enumerate}
				\item $ \frakm = \vspan_\bR\{ B_0+cU_0,B_1,B_2\},\ \frakh = \vspan_\bR\{U_0\}, $
				\item $ \frakm = \vspan_\bR\{ B_0, B_1 + cU_1,B_2\},\ \frakh = \vspan_\bR\{U_1\}, $ 
				\item $ \frakm = \vspan_\bR\{ B_+, B_- + cU_+,B_2\},\ \frakh = \vspan_\bR\{U_+\}. $
			\end{enumerate}
			\item If a Lie subalgebra $ \frak{su}(1,1)_{\text{L}}\oplus\Aff(\bR) = \frakm \oplus \frakh \subset \mathfrak{so}(2,2) $ is a reductive decomposition, it is isomorphic to the following:
			\begin{enumerate}
				\item[(iv)] $ \frakm = \vspan_\bR\{ B_+, B_-,B_2\},\ \frakh = \vspan_\bR\{U_+, U_2\}. $
			\end{enumerate}
			\item If $ \frak{su}(1,1)_{\text{L}}\oplus\frakh_0 = \frakm\oplus\frakh_0 \simeq \mathfrak{so}(2,2) $ is a reductive decomposition, it is isomorphic to the following: 
			\begin{enumerate}
			\item[(v)]	$ \frakm = \vspan_\bR\{B_i + c U_i\}_{i=0}^2,\  \frakh = \frakh_o. $
			\end{enumerate}   
		\end{enumerate}
		Here $ B_\pm = \frac{1}{\sqrt{2}}(B_0 \pm B_1), U_\pm = \frac{1}{\sqrt{2}}(U_0 \pm U_1), $ and $ c \in \bR. $ The vector space $ \frakm $ is equipped with a Lorentzian metric via the map $ \tau\colon \frakm \to T_o\bH^3_1, $ which identifies $ (\frakm, \tau^*g_{\lambda\mu\nu}) \simeq (T_o\bH^3_1,g_{\lambda\mu\nu}). $
	\end{lemma}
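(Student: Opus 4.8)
\emph{Proof strategy.} I would decouple the problem into two independent classifications: (a) which isotropy subalgebras $\frakh \subset \frakh_o$ can occur, up to the action that realizes isomorphisms in the sense of \pref{th:isom-HS}, and (b) for each such $\frakh$, which reductive complements $\frakm$ are possible inside the fixed transitive subalgebra $\frakg = \mathfrak{su}(1,1)_{\mathrm L}\oplus\frakh$. The whole argument hinges on the observation that $\frakh_o\cong\mathfrak{su}(1,1)$ via $U_i\leftrightarrow X_i$, since the $U_i$ obey exactly the brackets \eqref{eq:Lie-alg-su(1,1)} of the $X_i$, and that $\mathfrak{su}(1,1)_{\mathrm L}$ is an ideal of $\frakg$: bracketing any $B_i$ with any $U_j$ gives $([X_i,X_j],0)\in\mathfrak{su}(1,1)_{\mathrm L}$, so $[\mathfrak{su}(1,1)_{\mathrm L},\frakh]\subset\mathfrak{su}(1,1)_{\mathrm L}$.

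For step (a), I would use that the stabilizer $\SU(1,1)$ acts on $\frakh_o$ by $\Ad$ as $\SO_0(1,2)$ on $(\mathfrak{su}(1,1),\text{Killing form})\cong\bR^{1,2}$; such conjugations fix $\frakh_o$ and preserve the metric on $\frakm\cong T_o\bH^3_1$, hence induce isomorphisms of the type allowed by \pref{th:isom-HS}. It therefore suffices to list subalgebras of $\mathfrak{su}(1,1)$ up to this $\SO_0(1,2)$-action: the $1$-dimensional ones split into the timelike, spacelike and null line orbits with representatives $U_0,U_1,U_+$; every $2$-dimensional subalgebra is a Borel ($\Aff(\bR)$) subalgebra, forming one orbit with representative $\vspan_\bR\{U_+,U_2\}$ (here $[U_2,U_+]=2U_+$); and the only $3$-dimensional case is $\frakh=\frakh_o$. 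The trivial $\frakh=0$ is excluded by non-triviality, leaving exactly the five cases (i)--(v).

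For step (b), I would note that $\mathfrak{su}(1,1)_{\mathrm L}$ is itself an $\Ad|_H$-invariant complement, and that every complement to $\frakh$ in $\frakg$ is the graph $\frakm_\phi=\{w+\phi(w):w\in\mathfrak{su}(1,1)_{\mathrm L}\}$ of a linear map $\phi\colon\mathfrak{su}(1,1)_{\mathrm L}\to\frakh$; the reductivity condition $[\frakh,\frakm_\phi]\subset\frakm_\phi$ is then equivalent to $\phi$ being $\frakh$-equivariant, i.e. $\phi\in\mathrm{Hom}_\frakh(\mathfrak{su}(1,1)_{\mathrm L},\frakh)$, where $\mathfrak{su}(1,1)_{\mathrm L}$ carries the restricted adjoint action $U_i\cdot B_j=[X_i,X_j]_{\mathrm L}$. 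Thus the reductive complements form an affine space over this $\mathrm{Hom}$-space, and the parameter $c$ is precisely a coordinate on it. I would compute it caseby case. When $\dim\frakh=1$ the target is the trivial module, so $\phi$ must kill $\operatorname{im}(\mathrm{ad}_U)$ and the space is $1$-dimensional; reading off the cokernel direction gives $\phi(B_0)=cU_0$ in (i), $\phi(B_1)=cU_1$ in (ii), and $\phi(B_-)=cU_+$ in (iii). When $\frakh=\frakh_o$, the module $\mathfrak{su}(1,1)_{\mathrm L}$ is the adjoint representation of the simple algebra $\mathfrak{su}(1,1)$, so Schur's lemma forces $\phi(B_i)=cU_i$ and yields (v).

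The one genuinely delicate case is the Borel $\frakh=\vspan_\bR\{U_+,U_2\}$, which I expect to be the main obstacle and the crux of the lemma. Diagonalizing $\mathrm{ad}_{U_2}$ gives weights $+2,-2,0$ on $B_+,B_-,B_2$ and weights $+2,0$ on $U_+,U_2$; weight preservation forces $\phi(B_+)=aU_+$, $\phi(B_-)=0$, $\phi(B_2)=dU_2$, and intertwining with $\mathrm{ad}_{U_+}$ (using $[X_+,X_-]=-2X_2$ and $[X_+,X_2]=-2X_+$) then forces $d=0$ and $a=0$. Hence $\mathrm{Hom}_\frakh=0$, the complement is rigid, and we obtain exactly (iv) with no free parameter. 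Establishing this rigidity, together with checking that the $c$-families in the remaining cases are not further collapsed by any isomorphism of \pref{th:isom-HS} fixing the chosen representative $\frakh$, is where the care is needed; the residual verifications that each listed $\frakm$ is $\Ad|_H$-invariant and that $\tau|_\frakm$ is a linear isomorphism follow at once from $\frakm\cap\frakh_o=0$.
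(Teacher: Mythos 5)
Your proposal is correct and follows essentially the same route as the paper: reduce the isotropy subalgebra $\frakh\subset\frakh_o\cong\mathfrak{su}(1,1)$ to a causal-character representative by conjugation in the stabilizer, then determine the $\Ad|_H$-invariant complements of each representative. Your uniform packaging of the second step as computing $\mathrm{Hom}_{\frakh}(\mathfrak{su}(1,1)_{\mathrm{L}},\frakh)$ --- the cokernel of $\mathrm{ad}_U$ in the line cases, a weight argument for the Borel case, Schur's lemma for $\frakh=\frakh_o$ --- is a cleaner rendering of the paper's explicit bracket computations, and it supplies the rigidity of case (iv) that the paper leaves largely implicit.
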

	\begin{proof}
		\begin{enumerate}
			\item Assume that $ c_1c_2 \neq 0. $ When we choose a generator $ w = c_0U_0 + c_1U_1+c_2U_2 \in \frakh_o $ of $ \bR, \ $
			  $ v_1 = \frac{1}{\sqrt{c_1^2+c_2^2}}(c_2B_1 - c_1B_2), $ and $ v_2 = \frac{1}{\sqrt{c_1^2+c_2^2}}(c_1B_1 + c_2 B_2) $ satisfies the following Lie algebra relations:
			 \begin{align}
			 	[w,B_0] & = 2\sqrt{c_1^2+c_2^2}v_1,\ [w,v_1] = 2\sqrt{c_1^2+c_2^2}B_0+2c_0v_2, \ [w,v_2] = -2c_0v_1,\\
			 	[B_0,v_1] & = 2v_2,\quad [v_1,v_2] = -2B_0, \quad [v_2,B_0] = 2v_1.
			 \end{align}
			 It implies that it is enough to consider $ w = c_0U_0+c_1U_1 \in \frakh_o $ as a generator of $ \bR. $
			 Assume that $c_0c_1\neq 0.$
			 \begin{enumerate}
			 	\item When $ -c_0^2+ c_1^2 < 0, $ 
			 	we define $ v_0 $ and $ v_1 $ as follows:
			 	\begin{equation}
			 			v_0 = \frac{1}{\sqrt{c_0^2-c_1^2}}(c_1B_1+c_0B_0), \ v_1 = \frac{1}{\sqrt{c_0^2-c_1^2}}(c_0B_1+c_1B_0).
			 	\end{equation}
			 	Then we have the following Lie algebra relations:
			 	\begin{align}
			 		& [w,v_0] =0, \ [w, v_1] = 2B_2, \ [w, B_2] = -2v_1,\\
			 		& [v_0, v_1] = 2B_2, \ [v_1, B_2] = -2v_0, \ [B_2, v_0] = 2v_1.
			 	\end{align}
			 	Therefore, if $ \frakm\oplus \bR w $ is a reductive decomposition,
			 	\begin{equation}
			 		\frakm = \vspan_\bR\{ v_0 + cw, v_1,B_2 \}.
			 	\end{equation}
			 	\item When $ -c_0^2+ c_1^2 > 0, $ 
			 	we define $ v_0 $ and $ v_1 $ as follows:
			 	\begin{equation}
			 		v_0 = \frac{1}{\sqrt{c_1^2-c_0^2}}(c_0B_1+c_1B_0), \ v_1 = \frac{1}{\sqrt{c_1^2-c_0^2}}(c_1B_1+c_0B_0).
			 	\end{equation}
			 	Then we have the following Lie algebra relations:
			 	\begin{align}
			 		& [w,v_0] =-2B_2, \ [w, v_1] = 0, \ [w, B_2] = -2v_0, \\
			 		& [v_0, v_1] = 2B_2, \ [v_1, B_2] = -2v_0, \ [B_2, v_0] = 2v_1.
			 	\end{align}
			 	Therefore, if $ \frakm\oplus \bR w $ is a reductive decomposition,
			 	\begin{equation}
			 		\frakm = \vspan_\bR\{ v_0, v_1+cw,B_2 \}.
			 	\end{equation}
			 	\item When $ c_0^2=c_1^2(\neq0), $ 
			 	we define $ v_+ $ and $ v_- $ as follows:
			 	\begin{equation}
			 		v_+ = \frac{1}{\sqrt{2}}(B_0 + B_1), \ v_- = \frac{1}{\sqrt{2}}(B_0 - B_1).
  			 	\end{equation}
  			 	\begin{enumerate}
  			 		\item Suppose that $ w_+ = \frac{1}{\sqrt{2}}(U_0+U_1) \in \frakh_o $ is a generator of $ \bR. $
  			 		Then we have the following Lie algebra relations:
  			 		\begin{align}
  			 			&[w_+,v_+] =0, \ [w_+, v_-] = -2B_2, \ [w_+, B_2] = -2v_+,\\
  			 			&[v_+,v_-] = -2B_2, \ [v_+,B_2] = -2v_+, \ [v_-,B_2] = 2v_-.
  			 		\end{align}
  			 		Therefore, if $ \frakm_+\oplus \bR w_+ $ is a reductive decomposition,
  			 		\begin{equation}
  			 			\frakm_+ = \vspan_\bR\{ v_+, v_-+cw_+,B_2 \}.
  			 		\end{equation}
  			 		\item Suppose that $ w_- = \frac{1}{\sqrt{2}}(U_0-U_1) \in \frakh_o $ is a generator of $ \bR. $
  			 		Then we have the following Lie algebra relations:
  			 		\begin{align}
  			 			&[w_-,v_+] =2B_2, \ [w_-, v_-] = 0 , \ [w_-, B_2] = 2v_-,\\
  			 			&[v_+,v_-] = -2B_2, \ [v_+,B_2] = -2v_+, \ [v_-,B_2] = 2v_-. 
  			 		\end{align}
  			 		Therefore, if $ \frakm_-\oplus \bR w_- $ is a reductive decomposition,
  			 		\begin{equation}
  			 			\frakm_- = \vspan_\bR\{ v_++cw_-, v_-,B_2 \}.
  			 		\end{equation}
  			 	\end{enumerate}
  			 	These two reductive decompositions $ \frakm_\pm \oplus \bR w_\pm $ are isomorphic via the Lie algebra isomorphism $ \varphi\colon \frakm_+\oplus \bR w_+\to \frakm_-\oplus \bR w_- $ whose restriction $ \varphi|_\frakm $ is an isometry defined as follows:
  			 	\begin{equation}
  			 		\varphi(w_+) = w_-, \ \varphi(v_+) = v_-,\ \varphi(v_- + cw_+) = v_+ + cw_-,\ \varphi(B_2) = -B_2.
  			 	\end{equation}
			 \end{enumerate}
			 Let $ \varphi \colon \frakg \to \frakg' $ be a Lie algebra isomorphism between two of the reductive decompositions in (1).
			 Since both derived algebras $ [\frakg, \frakg] $ and $ [\frakg',\frakg'] $ coincide with $ \vspan_\bR\{B_i\}_{i=0}^2, $ the restriction $ \varphi|_{\vspan_\bR\{B_i\}_{i=0}^2} $ is also a Lie algebra isomorphism. 
			 Consequently, none of the Lie algebra isomorphisms in case (1) induces an isometry of $ \frakm. $
			 \item Assume that $ \dim \frakh = 2. $ Since $ \frakh \subset \frakh_o $ is a Lie subalgebra, one of the two generators of $ \frakh_o $ must be lightlike while the other is spacelike, and they must be orthogonal each other.
			 In this case, there are $ v_+,v_-,v_2 \in \vspan_\bR\{B_i\}_{i=0}^2 $ satisfying the following Lie algebra relation:
			 \begin{align}
			 &	[w_2, v_+] = 2v_+, [w_2, v_-] = -2v_-, [w_2,v_2] = 0,\\
			 &	[w_+, v_+] = 0, [w_+, v_-] = -2v_2, [w_+,v_2] = -2v_+,\\
			 &    [w_-, v_+] = -2v_2, [w_-, v_-] = 0, [w_-,v_2] = 2v_-.
			 \end{align}
			 \item Assume that $ \frakh = \frakh_0. $ Then $ \Ad|_H $-invariant subspace $ \frakm \subset \mathfrak{so}(2,2) $ is given by $ \frakm = \vspan_\bR\{B_i +\varphi(B_i)\}_{i=0}^2 $ for a linear map $ \varphi \colon \mathfrak{su}(1,1)\oplus\{0\} \to \frakh_0. $
			 From the condition of $ [\frakm, \frakh] \subset \frakm, $ we have the result.
		\end{enumerate}
		Thus we classified all reductive decompositions of the form $ \frakg = \mathfrak{su}(1,1)_{\text{L}}\oplus \frakh \subset \mathfrak{so}(2,2), $ where $ \frakh $ is any non-trivial Lie subalgebra of $ \frakh_o. $
	\end{proof}
	
		\begin{remark}\label{rm:Aff(R)}
$ \Aff(\bR) $ is a two-dimensional subgroup of $ \SU(1,1) $ and one of them is described as follows:
		\begin{equation}
			\Aff(\bR) = \lc \cosh v I_2 + \frac{\sinh v}{v}\begin{pmatrix}
				\frac{u}{\sqrt{2}}\sqrt{-1} & \frac{u}{\sqrt{2}} + v\sqrt{-1} \\
				\frac{u}{\sqrt{2}} - v\sqrt{-1} & -\frac{u}{\sqrt{2}}\sqrt{-1}
			\end{pmatrix} \mid u \in \bR, v \in \bR_+ \rc.
		\end{equation} 
		The Lie algebra of this representation is $ \mathfrak{aff}(\bR) = X_+^\perp = \vspan_\bR\{X_+, X_2\}, $ where $ X_+ = \frac{1}{\sqrt{2}}(X_0+X_1). $
		However, no homogeneous structures correspond to the reductive decomposition (iv) in \pref{lm:so(2,2)-Lie algebra}, since the subspace $ \frakm = \vspan_\bR\{B_0,B_1,B_2\}\subset \mathfrak{so}(2,2) $ implies that no stabilizer subalgebra is generated by $ \{\Rtilde(X_i,X_j)\}_{i,j=0}^2. $
		If an arbitrary $ c \in \bR $ in part (1), and (3) of \pref{lm:so(2,2)-Lie algebra} is zero, then the corresponding homogeneous structure become $ S_0 = \mu (\theta^0\wedge\theta^1\wedge\theta^2). $
		In that case, all reductive decompositions $ \frakg = \frakm\oplus\frakh $ are trivial, i.e. $ \frakm  =\mathfrak{su}(1,1),$ and $ \frakh = \{0\}. $
	\end{remark}
	
	Therefore, we obtain the following results summarized as in \pref{tb:AdS3-HS-Isom-RD} in  \pref{th:homogeneous-str-symmetric}.
	
	\begin{theorem}\label{th:homogeneous-str-symmetric}
		If $ -\lambda=\mu=\nu, $ every homogeneous Lorentzian structure on $ (\bH^3_1,g_{\lambda\mu\nu}) $ is one of the following up to isomorphism:
		\begin{enumerate}
			\item $ S_0 = \mu(\theta^0\wedge\theta^1\wedge\theta^2), $
			\item $ S_{\text{vol}}(t) =  t(\theta^0\wedge\theta^1\wedge\theta^2), (t \ge 0, t\neq\mu) $
			\item $ S_\lambda(t) = \mu\theta^2\otimes\theta^0\wedge\theta^1+t\theta^0\otimes\theta^1\wedge\theta^2 + \mu \theta^1\otimes\theta^2\wedge\theta^0, (t\neq \mu) $
			\item $ S_\mu(t) = \mu\theta^2\otimes\theta^0\wedge\theta^1+\mu\theta^0\otimes\theta^1\wedge\theta^2 + t \theta^1\otimes\theta^2\wedge\theta^0, (t\neq\mu) $
			\item $ S_{\text{null}}^{\mp}(t) = \mu\theta^2\otimes \theta^0\wedge\theta^1 + (\mu \pm t)\theta^0\otimes \theta^1\wedge\theta^2 + t\theta^1\otimes\theta^1\wedge\theta^2  -t \theta^0\otimes\theta^2\wedge\theta^0 + (\mu \mp t) \theta^1\otimes \theta^2\wedge\theta^0. (t> 0) $
		\end{enumerate}
		There are isomorphisms of homogeneous structures $ S_{\text{null}}^{-}(t) \simeq S_{\text{null}}^{+}(-t). $  
		In addition, the corresponding isometry groups and reductive decompositions are obtained as in the following \pref{tb:AdS3-HS-Isom-RD}.
		\begin{table}[ht]
			\centering
			\phantomsection
			\caption{All homogeneous Lorentzian structutures (HS) on $(\bH^3_1,g_{\lambda\mu\nu})$ that are homothetic to $ \AdS_3 $ with the isometry groups and reductive decompositions $ \frakg = \frakm\oplus \frakh $ in \pref{lm:so(2,2)-Lie algebra} for 
				$ t \in \bR $ and $ X_\pm = \frac{1}{\sqrt{2}}(X_0\pm X_1) $}
			\label{tb:AdS3-HS-Isom-RD}
			\begin{tabular}{|c|c|c|c|}
				\hline
				HS & isometry group & reductive decomposition & symmetry \\
				\hline\hline
				$ S_{\text{vol}}(t) $ & $ \SU(1,1)\times\SU(1,1) $ & (v) $ c = \dfrac{t-\mu}{2\mu}(\neq0) $ & Full \\
				\hline
				$ S_\lambda(t) $ & $\SU(1,1)\times U(1) $ & (i) $ c = \dfrac{t-\mu}{2\mu}(\neq0) $ & $ (X_1,X_2) $- rotations \\
				\hline
				$ S_\mu(t) $ & $\SU(1,1)\times \bR$ & (ii) $ c = \dfrac{t-\mu}{2\mu}(\neq0) $ &  $ (X_0,X_2) $-boosts \\
				\hline $ S^{\pm}_{\text{null}}(t) $ & $ \SU(1,1)\times \bR $ & (iii)  $ c = \mp\dfrac{t}{\mu}(t)(\neq0) $ & $ X_\pm $-lightlike\\
				\hline $ S_0 $ & $\SU(1,1)$ & $ \frakm=\mathfrak{su}(1,1), \frakh = \{0\} $ & None \\
				\hline
			\end{tabular}
		\end{table}
	
	\end{theorem}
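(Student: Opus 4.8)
The plan is to read off the classification from the Lie-algebraic list of \pref{lm:so(2,2)-Lie algebra} via the Ambrose--Singer correspondence, and then to attach a homogeneous structure tensor to each surviving reductive decomposition. Since $-\lambda=\mu=\nu$, the manifold $(\bH^3_1,g_{\lambda\mu\nu})$ is homothetic to the standard $\AdS_3$, a Lorentzian symmetric space with full isometry algebra $\mathfrak{so}(2,2)=\mathfrak{su}(1,1)_{\text{L}}\oplus\mathfrak{su}(1,1)_{\text{R}}$ and isotropy $\frakh_o=\vspan_\bR\{U_0,U_1,U_2\}\cong\mathfrak{su}(1,1)$ at $o$. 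First I would use the general form \eqref{eq:isometric action-subgroup} of the isometric action to conclude that any homogeneous Lorentzian structure is realized by a transitive, almost effective subgroup $\SU(1,1)_{\text{L}}\times H$; its reductive decomposition therefore has the shape $\frakg=\mathfrak{su}(1,1)_{\text{L}}\oplus\frakh$ with $\frakh=\frakg\cap\frakh_o\subseteq\frakh_o$, which is precisely the hypothesis of \pref{lm:so(2,2)-Lie algebra}. Thus, up to the isomorphism of \pref{th:isom-HS}, every homogeneous structure corresponds either to the trivial decomposition, to one of the one-parameter families (i)--(iii), to the two-dimensional case (iv), or to the full case (v).

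Next I would compute $S=\nabla-\widetilde\nabla$ for each decomposition, taking $\nabla$ from \eqref{eq:connection-form-general} with $-\lambda=\mu=\nu$ and $\widetilde\nabla$ from $\widetilde\nabla_{X^*}Y^*|_o=-[X,Y]^*_\frakm|_o$ in \eqref{eq:nabla-Lie}, expanding the left-invariant frame $\{X_\alpha\}$ in the fundamental fields exactly as in the proofs of \pref{th:isometry-timelike-1} and \pref{th:isometry-spacelike-1}. The same bracket computation as there (now with $-\lambda=\mu=\nu$) shows that (i) and (ii) produce $S_\lambda(t)$ and $S_\mu(t)$, with $c=(t-\mu)/2\mu$. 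For the full case (v) the complement $\frakm=\vspan_\bR\{B_i+cU_i\}$ is invariant under all of $\frakh_o\cong\mathfrak{su}(1,1)$, so the resulting $G$-invariant tensor $S$ must lie in the space of $\Ad(\frakh_o)$-invariant $(0,3)$-tensors on $T_oM\cong\bR^{1,2}$; this space is one-dimensional and spanned by the volume form, whence $S=S_{\text{vol}}(t)=t(\theta^0\wedge\theta^1\wedge\theta^2)$. The only genuinely new calculation is case (iii): here the isotropy generator $U_\pm$ is lightlike, and expanding $\nabla-\widetilde\nabla$ in the $X_\pm=\frac{1}{\sqrt{2}}(X_0\pm X_1)$-adapted frame yields the non-symmetric structures $S_{\text{null}}^{\pm}(t)$ with $c=\mp t/\mu$.

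Finally I would pin down the identifications and parameter ranges. By \pref{rm:Aff(R)}, case (iv) contributes no homogeneous structure, since $\{\Rtilde(X_i,X_j)\}$ cannot generate a two-dimensional holonomy; and putting $c=0$ in (i), (ii), (iii) or (v) collapses the decomposition to the trivial one and returns $S_0=\mu(\theta^0\wedge\theta^1\wedge\theta^2)$, which explains the exclusions $t\neq\mu$ for $S_\lambda$, $S_\mu$ and $S_{\text{vol}}$, and $t\neq0$ for $S_{\text{null}}^{\pm}$. The isomorphism $S_{\text{null}}^{-}(t)\simeq S_{\text{null}}^{+}(-t)$ follows from the explicit Lie-algebra isomorphism between $\frakm_+\oplus\bR w_+$ and $\frakm_-\oplus\bR w_-$ built in the proof of \pref{lm:so(2,2)-Lie algebra}(1)(iii), whose restriction to $\frakm$ is an isometry, so \pref{th:isom-HS} applies. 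The reductions to $t\ge0$ for $S_{\text{vol}}$ (with $t=0$ the symmetric structure $S=0$) and to $t>0$ for $S_{\text{null}}^{\pm}$ come from the orientation-reversing isometries of $\AdS_3$, which flip the volume form and hence act by $t\mapsto-t$; together with the $\pm$ labels these leave exactly the stated families. Reading the isometry groups and decompositions off \pref{lm:so(2,2)-Lie algebra} then fills in \pref{tb:AdS3-HS-Isom-RD}.

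The hard part will be case (iii). Unlike the timelike and spacelike deformations, the lightlike decomposition has a parabolic isotropy direction $U_\pm$, so neither the value of $\widetilde\nabla$ nor the check that $\nabla-S_{\text{null}}^{\pm}(t)$ equals the canonical connection can be imported verbatim from \pref{th:isometry-timelike-1} or \pref{th:isometry-spacelike-1}; I would have to carry out the frame expansion and the bracket bookkeeping directly and verify independently that $S_{\text{null}}^{\pm}(t)$ satisfies $\widetilde\nabla S=\widetilde\nabla R=0$. A secondary difficulty is the isomorphism accounting: one must confirm via \pref{th:isom-HS} that the only coincidences among the families are the stated ones, so that $S_{\text{vol}}$, $S_\lambda$, $S_\mu$, $S_{\text{null}}^{\pm}$ and $S_0$ are pairwise non-isomorphic and each one-parameter family is free of residual redundancy.
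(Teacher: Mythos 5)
Your plan inverts the paper's logic: you start from the list of reductive decompositions in \pref{lm:so(2,2)-Lie algebra} and attach a tensor to each, whereas the paper starts from the general ansatz \eqref{eq:homogeneous-str}, solves $\widetilde\nabla S=0$ exhaustively (noting $\widetilde\nabla R=0$ is automatic on a space of constant curvature), and only then uses the lemma to normalize the resulting decompositions. The inversion creates a genuine completeness gap at your very first step. You justify the reduction to the lemma by citing \eqref{eq:isometric action-subgroup}, but that display is a summary of the outcome of Section 3, not a proven input; asserting that every homogeneous structure is realized by a transitive almost effective group of the form $\SU(1,1)_{\mathrm{L}}\times H$, with Ambrose--Singer algebra equal (as a subspace of $\mathfrak{so}(2,2)$) to $\mathfrak{su}(1,1)_{\mathrm{L}}\oplus\frakh$ with $\frakh\subset\frakh_o$, is circular. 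What Ambrose--Singer actually gives you is a transitive subalgebra $\frakg=\frakh\oplus\frakm\subset\mathfrak{so}(2,2)$ with $\frakh=\mathfrak{hol}(\widetilde\nabla)\subset\frakh_o$ and $\frakm\cong T_o\bH^3_1$; there is no a priori reason for $\frakg$ to contain $\mathfrak{su}(1,1)_{\mathrm{L}}+\frakh$, and \pref{lm:so(2,2)-Lie algebra} only classifies decompositions of that special shape. The paper's cases (5)--(8) in its proof (e.g.\ $\rho_2\neq\mu$, $\sigma_0\neq\mu$, $\tau_1\neq\mu$ with $\rho_0,\rho_1,\sigma_1$ all nonzero) are precisely solutions of $\widetilde\nabla S=0$ whose bracket data does not visibly sit in one of the lemma's normal forms and must be conjugated there by explicit automorphisms; your plan, as written, never encounters these solutions and so cannot certify that the five families exhaust all homogeneous structures. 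To close the gap you would either have to classify all transitive subalgebras of $\mathfrak{so}(2,2)$ with isotropy in $\frakh_o$ up to conjugation (strictly more than the lemma provides), or fall back on the paper's direct solution of the algebraic system coming from $\widetilde\nabla S=0$.

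The constructive half of your plan is sound and in places cleaner than the paper's: the observation that for case (v) the $\Ad(\frakh_o)$-invariance of $S$ forces it into the one-dimensional space of invariant alternating $(0,3)$-tensors, hence $S=S_{\mathrm{vol}}(t)$, is a nice replacement for the paper's coefficient chase; your treatment of the parameter ranges ($c=0$ collapsing to $S_0$, orientation-reversal giving $t\mapsto -t$ for $S_{\mathrm{vol}}$, and $S^-_{\text{null}}(t)\simeq S^+_{\text{null}}(-t)$ via the isomorphism built in \pref{lm:so(2,2)-Lie algebra}(1)(iii) together with \pref{th:isom-HS}) matches what the theorem needs; and you correctly flag that the lightlike case (iii) requires a fresh computation rather than an import from \pref{th:isometry-timelike-1} or \pref{th:isometry-spacelike-1}. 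But none of this repairs the missing surjectivity argument, so as it stands the proposal proves that each listed $S$ is a homogeneous structure with the stated isometry group and reductive decomposition, not that the list is complete.
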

	
	\begin{proof}
		The full group of isometries of $ (\bH^3_1, g_{\lambda\mu\nu})\ (-\lambda=\mu=\nu) $ is $ \SO_0(2,2) \simeq (\SU(1,1)\times\SU(1,1))/\bZ_2 $ with the isometric action given as \eqref{eq:ful-isometry group action}.
		The images $ \{ \phi_*U_\alpha|_o \in \mathfrak{gl}(T_o\bH^3_1) \}_{\alpha=0}^2 $ of the Lie subalgebra $ \frakh_o=\vspan_\bR\{U_i\}_{i=0}^2 $ of the stabilizer group $ \Stab(o) $ are
		the following:
		\begin{equation}
			\phi_*U_\alpha|_o = 
			\begin{cases}
				& 2\lb \theta^1\otimes X_2 - \theta^2 \otimes X_1 \rb \ (\alpha=0),\\
				& -2\lb \theta^0\otimes X_2 + \theta^2 \otimes X_0 \rb \ (\alpha=1), \\
				& 2\lb \theta^0\otimes X_1 + \theta^1 \otimes X_0 \rb \ (\alpha=2).
			\end{cases}
		\end{equation}
		Since the condition $ \widetilde\nabla S = 0 $ implies that $ X^*_\alpha(\rho_\beta) = X^*_\alpha(\sigma_\beta) = X^*_\alpha(\tau_\beta) = 0(\alpha,\beta=0,1,2), $ all coefficient functions of the homogeneous structure tensor $ S $ of the form \eqref{eq:homogeneous-str} are constants.
		Then we obtain $ \Rtilde(X_i,X_j)|_o \in \vspan_\bR\{\phi_*U_\alpha|_o\}_{\alpha=0}^2 $ for $ i,j=0,1,2 .$
		The independent and non-trivial components of the curvature tensor are the following:
		\begin{align}
			\mu^2\Rtilde^0_{101} & = 2\mu(\rho_2-\mu) - \sigma_1\tau_0 + (\mu-\sigma_0)(\mu-\tau_1),\\
			\mu^2\Rtilde^0_{102} & = -2\mu\rho_1 + \sigma_2\tau_0 +\tau_2 (\mu-\sigma_0),\\
			\mu^2\Rtilde^0_{112} & = -2\mu\rho_0 + \sigma_1\tau_2 +\sigma_2 (\mu-\tau_1),\\
			\mu^2\Rtilde^0_{201} & = -2\mu\tau_2 + \rho_0\sigma_1 +\rho_1 (\mu-\sigma_0),\\
			\mu^2\Rtilde^0_{202} & = 2\mu(\tau_1-\mu) - \rho_0\sigma_2 + (\mu-\rho_2)(\mu-\sigma_0),\\
			\mu^2\Rtilde^0_{212} & = 2\mu\tau_0 - \rho_1\sigma_2 - \sigma_1 (\mu-\rho_2),\\
			\mu^2\Rtilde^1_{201} & = -2\mu\sigma_2 + \rho_1\tau_0 +\rho_0 (\mu-\tau_1),\\
			\mu^2\Rtilde^1_{202} & = 2\mu\sigma_1 - \rho_0\tau_2 - \tau_0 (\mu-\rho_2),\\
			\mu^2\Rtilde^1_{212} & = 2\mu(\sigma_0-\mu) - \rho_1\tau_2 + (\mu-\rho_2)(\mu-\tau_1).
		\end{align}
		
		Now, we shall determine the each coefficient of the homogeneous structure tensor $ S $.
		Since $ \widetilde\nabla R = 0 $ holds trivially, the only condition we need to consider is $ \widetilde\nabla S = 0. $
		In particular, we have the following from the conditions of $ X_\alpha(\rho_\alpha) = X_\alpha(\sigma_\alpha) = X_\alpha(\tau_\alpha) = 0 \ (\alpha=0,1,2): $
		\begin{align}
			(\rho_2-\mu)(\tau_0+\sigma_1) & = 0,\ \rho_0\tau_2 + \rho_1\sigma_2 = 0, \notag\\
			(\sigma_0-\mu)(\rho_1-\tau_2) & = 0,\ \sigma_2\tau_0-\sigma_1\rho_0 = 0, \\
			(\tau_1-\mu)(\rho_0+\sigma_2) & = 0, \ \tau_0\rho_1 + \tau_2\sigma_1 = 0.\notag
		\end{align} 
		\begin{enumerate}
			\item Suppose that $ \rho_2=\sigma_0=\tau_1=\mu. $
			It immediately follows that $ S=S_0. $
			\item Suppose that $ \rho_2 =\mu, \sigma_0\neq\mu, \tau_1=\mu. $ It immediately follows that $ S=S_\lambda(t). $
			\item Suppose that $ \rho_2 =\mu, \sigma_0=\mu, \tau_1\neq\mu.  $ It immediately follows that $ S=S_\mu(t). $
			\item Suppose that $ \rho_2 \neq \mu, \sigma_0=\mu, \tau_1=\mu. $ It immediately follows that $ S=S_\nu(t). $
			
			In each of the cases $ (1) $--$ (4) $ the condition $ \widetilde\nabla S = 0 $ forces the remaining coefficients to vanish. We obtained results similar to those in \pref{sc:trivial} through \pref{sc:space2} regarding the isometry groups and reductive decompositions.
			\item Suppose that $ \rho_2=\mu, \sigma_0\neq \mu, \tau_1\neq \mu. $
			Since $ \rho_1 = \tau_2, \rho_0 = -\sigma_2, $ and $ \widetilde\nabla S = 0, $ we have the following:
			
			\begin{equation}\label{eq:constran of nablatildeS}
				\rho_0=\rho_1=\tau_2=\sigma_2=0,\ (\mu-\sigma_0)(\tau_1-\mu) + \rho_0^2 + \sigma_1\tau_0 = 0, \ \sigma_1 = -\tau_0.
			\end{equation}
			Then the non-trivial curvature tensor is described as follows:
%
			\begin{align}
				\Rtilde(X_1, X_2) & = -\frac{2\sigma_1}{\mu}\lb \theta^2\otimes X_0 + \theta^0\otimes X_2 \rb -\frac{2(\mu-\sigma_0)}{\mu}\lb \theta^2\otimes X_1 - \theta^1\otimes X_2 \rb,\\
				\Rtilde(X_2, X_0) & = \frac{2(\mu-\tau_1)}{\mu}\lb \theta^2\otimes X_0 + \theta^0\otimes X_2 \rb -\frac{2\sigma_1}{\mu}\lb \theta^2\otimes X_1 - \theta^1\otimes X_2 \rb. 
			\end{align}
			Since $ \sigma_1^2+(\mu-\tau_1)(\mu-\sigma_0) = 0, \ \Rtilde(X_1,X_2) $ and $ \Rtilde(X_2,X_0) $ are linearly dependent. 
			Let $ k = \dfrac{\sigma_1}{\mu-\tau_1} = -\dfrac{\mu-\sigma_0}{\sigma_1} $ and $ \sigma_1 = t $ be nonzero constants.
			Every homogeneous structure can be described as the following:
			\begin{align}\label{eq:homogeneous-str-S(t,k)}
				S & = \mu\theta^2\otimes \theta^0\wedge\theta^1 + (\mu + k t)\theta^0\otimes \theta^1\wedge\theta^2 + t\theta^1\otimes\theta^1\wedge\theta^2\\
				&  -t \theta^0\otimes\theta^2\wedge\theta^0 + \lb\mu - \frac{t}{k}\rb \theta^1\otimes \theta^2\wedge\theta^0. \notag
			\end{align}
			Then we obtain the following Lie algebra relation:
			\begin{align}\label{eq:Lie algebra of reductive decomposition-k}
				[X_0, X_1] & = \lc 2 + \frac{t}{\mu}\lb k -\frac{1}{k} \rb \rc X_2, \notag\\
				[X_1, X_2] & = -2 X_0 - \frac{t}{\mu k}(kX_1 - X_0) + \frac{t}{\mu}(kU_0 - U_1),\\
				[X_2,X_0] & = 2X_1 + \frac{t}{\mu}(kX_1-X_0) -\frac{t}{\mu k}(kU_0-U_1).\notag
			\end{align}
			\begin{enumerate}
				\item When $ k^2-1 > 0, $ the following $ \frakm \oplus \frakh$ is a reductive decomposition isomorphic to (i) of (1) in \pref{lm:so(2,2)-Lie algebra} with $ c = \dfrac{t}{2\mu }\lb k - \frac{1}{k}\rb $:
				\begin{align}
					\frakm & = \vspan_\bR\lc v_0 = \frac{1}{\sqrt{k^2-1}}(k X_0 - X_1), v_1 = \frac{1}{\sqrt{k^2-1}}(kX_1 - X_0), X_2 \rc, \\
					\frakh & = \vspan_\bR\{ kU_0 - U_1\}.
				\end{align}
				\item When $ k^2-1 < 0, $ the following $ \frakm \oplus \frakh$ is a reductive decomposition isomorphic to (ii) of (1) in \pref{lm:so(2,2)-Lie algebra} with $ c = \dfrac{t}{2\mu}\lb k - \frac{1}{k}\rb $:
				\begin{align}
					\frakm & = \vspan_\bR\lc v_0 = \frac{1}{\sqrt{1- k^2}}(kX_1 - X_0), v_1 = \frac{1}{\sqrt{1 - k^2}}( kX_0 - X_1), X_2 \rc, \\
					\frakh & = \vspan_\bR\{ kU_0 - U_1\}.
				\end{align}
				\item When $ k =\mp1, $ the following $ \frakm_\mp \oplus \frakh_\mp $ is a reductive decomposition isomorphic to (iii) of (1) in \pref{lm:so(2,2)-Lie algebra} with $ c = \mp\dfrac{t}{\mu} $:
				\begin{align}
					\frakm_\mp & = \vspan_\bR\lc X_+ = \frac{1}{\sqrt{2}}(X_0 + X_1), X_- = \frac{1}{\sqrt{2}}(X_0 - X_1), X_2 \rc, \\
					\frakh_\mp & = \vspan_\bR\{ U_0 \mp U_1\}.
				\end{align}
			\end{enumerate}
			Therefore, if the causal characters of $ \Rtilde(X_1,X_2) $ and $ \Rtilde(X_2,X_0) $ are both timelike or spcelike, the Lie algebra of the corresponding reductive decomposition is isomorphic to the one obtained when $ S = S_\lambda(t) $ or $ S = S_\mu(t)(\simeq S_\nu(t)) $ from \pref{lm:so(2,2)-Lie algebra} and \pref{th:isom-HS}. 
			Then we now assume that $ \Rtilde(X_1,X_2) $ and $ \Rtilde(X_2,X_0) $ are both lightlike.
			We obtain the following homogeneous structures $ S = S^\mp_{\text{null}}(t). $
			\begin{align}
				S_{\text{null}}^{\mp}(t) & = \mu\theta^2\otimes \theta^0\wedge\theta^1 + (\mu\pm t)\theta^0\otimes \theta^1\wedge\theta^2 + t\theta^1\otimes\theta^1\wedge\theta^2\\
				&  -t \theta^0\otimes\theta^2\wedge\theta^0 + (\mu\mp t) \theta^1\otimes \theta^2\wedge\theta^0. \notag
			\end{align}
			Moreover, $ S^-_{\text{null}}(t) $ and $ S^+_{\text{null}}(-t) $ are isomorphic via the isomorphism of homogeneous structure $ \varphi \colon \frakm_-\oplus \frakh_- \to \frakm_+\oplus \frakh_+ $ given as follows:
			\begin{equation*}
				 \varphi(X_0) = X_0,\  \varphi(X_1) = -X_1,\ \varphi(X_2) = -X_2, \ \varphi(U_-) = U_+,
			\end{equation*}
			where $ \frakm_\pm=\vspan_\bR\{X_+,X_-,X_2\}, $ and $ \frakh_\pm = \vspan_\bR\{U_\pm\}. $
			\item Suppose that $ \rho_2\neq\mu, \sigma_0 = \mu, \tau_1\neq \mu. $
			Then we have $ \rho_0=\sigma_1=\sigma_2=\tau_0=0,\ \rho_1=\tau_2, $ and $ \rho_1^2-(\mu-\rho_2)(\mu-\tau_1) = 0 $ from $ \widetilde\nabla S = 0. $
			The corresponding homogeneous structures are the following :
			\begin{align}
				S & = t\theta^1\otimes \theta^0\wedge\theta^1 + \lb \mu - \frac{t}{k}\rb\theta^2\otimes \theta^0\wedge\theta^1 \\
				&  +\mu \theta^0\otimes\theta^1\wedge\theta^2 +\lb \mu - kt \rb \theta^1\otimes \theta^2\wedge\theta^0 + t \theta^2\otimes \theta^2\wedge\theta^0, \notag
			\end{align}
			where $ \rho_1=\tau_2 = t $ and $ \dfrac{t}{\mu-\rho_2} = \dfrac{\mu-\tau_1}{t} =k $ are nonzero constants. Since $  \rho_1^2-(\mu-\rho_2)(\mu-\tau_1) = 0, \ \Rtilde(X_0,X_1) $ and $ \Rtilde(X_2,X_0) $ are linearly dependent.
			Thus, the Lie algebra of the reductive decomposition is isomorphic to (ii) of (1) in \pref{lm:so(2,2)-Lie algebra}.
			It implies that the Lie algebra of the reductive decomposition is isomorphic to the one obtained when $ S = S_\mu(t)(\simeq S_\nu(t)) $ from \pref{th:isom-HS}. 
			\item Suppose that $ \rho_2\neq\mu, \sigma_0 \neq \mu, \tau_1 = \mu. $
			Then we have $ \rho_1=\sigma_1=\tau_0=\tau_2=0, \rho_0 = -\sigma_2, $ and $ \rho_0^2+(\mu-\rho_2)(\mu-\sigma_0) = 0 $ from $ \widetilde\nabla S = 0. $
			The corresponding homogeneous structures are the following:
			\begin{align}
				S & = t\theta^0\otimes \theta^0\wedge\theta^1 + \lb \mu - \frac{t}{k} \rb\theta^2\otimes \theta^0\wedge\theta^1 \\
				&  +(\mu + kt)\theta^0\otimes\theta^1\wedge\theta^2 - t \theta^2\otimes \theta^1\wedge\theta^2 + \mu \theta^1\otimes \theta^2\wedge\theta^0, \notag
			\end{align}
			where $ \rho_0=-\sigma_2 = t $ and $ \dfrac{t}{\mu-\rho_2} = -\dfrac{\mu - \sigma_0}{t} = k $ are nonzero constants.
			These homogeneous structures are isomorphic to those in (5) by the isomorphism given in \eqref{eq:exchange-isom} from \pref{th:isom-HS}.
			\item Suppose that $ \rho_2\neq\mu, \sigma_0 \neq \mu, \tau_1\neq \mu. $
			Since it implies $ \tau_0 = -\sigma_1, \ \rho_0=-\sigma_2,\  \rho_1=\tau_2, $ we have the following conditions from $ \widetilde\nabla S = 0 $:
			\begin{align}
				\sigma_1(\mu-\rho_2)-\rho_0\rho_1 = 0, \ \rho_0^2 + \rho_1^2 + (\tau_1-\sigma_0)(\mu - \rho_2) = 0, \notag\\
				\rho_1(\mu-\sigma_0) + \rho_0\sigma_1 = 0, \ \rho_0^2 - \sigma_1^2 + (\tau_1-\rho_2)(\mu - \sigma_0) = 0,  \\
				  \rho_0(\mu-\tau_1)-\rho_1\sigma_1 = 0, \ \rho_1^2 + \sigma_1^2 + (\rho_2-\sigma_0)(\mu - \tau_1) = 0.\notag
			\end{align}
			Then $ \rho_0, \rho_1$ and $ \sigma_1 $ are either all zero or all nonzero.
			\begin{enumerate}
				\item  When $ \rho_0 \neq 0, \  \rho_1 \neq 0, \  \sigma_1 \neq 0, $ we have $ \rho_2 = \mu - \frac{\rho_0\rho_1}{\sigma_1} , \ \sigma_0 = \mu + \frac{\rho_0\sigma_1}{\rho_1} , \ \tau_1 = \mu - \frac{\rho_1\sigma_1}{\rho_0}. $ 
				Thus, we have the following curvature tensor:
				\begin{align}
					\Rtilde(X_0,X_1) &= - \frac{2\rho_0\rho_1}{\mu\sigma_1}\lb \theta^1\otimes X_0 + \theta^0\otimes X_1 \rb - \frac{2\rho_1}{\mu}\lb \theta^2\otimes X_0 + \theta^0\otimes X_2 \rb \\\
					& - \frac{2\rho_0}{\mu}\lb \theta^1\otimes X_2 - \theta^2\otimes X_1 \rb, \notag \\
					\Rtilde(X_1,X_2) &= - \frac{2\rho_0}{\mu}\lb \theta^1\otimes X_0 + \theta^0\otimes X_1 \rb - \frac{2\sigma_1}{\mu}\lb \theta^2\otimes X_0 + \theta^0\otimes X_2 \rb \\
					& - \frac{2\rho_0\sigma_1}{\mu\rho_1}\lb \theta^1\otimes X_2 - \theta^2\otimes X_1 \rb, \notag \\
					\Rtilde(X_2,X_0) & = \frac{2\rho_1}{\mu}\lb \theta^1\otimes X_0 + \theta^0\otimes X_1 \rb + \frac{2\rho_1\sigma_1}{\mu\rho_0}\lb \theta^2\otimes X_0 + \theta^0\otimes X_2 \rb \\
					& + \frac{2\sigma_1}{\mu}\lb \theta^1\otimes X_2 - \theta^2\otimes X_1 \rb. \notag
				\end{align}
				Since these generate a one-dimensional Lie subalgebra of the stabilizer group in the reductive decomposition, the following map 
				$ \varphi\colon \frak{su}(1,1)\oplus\bR \to \frak{su}(1,1)\oplus\bR $ gives an isomorphism of homogeneous structures:
				\begin{align}
					&\varphi|_\frakh = \id|_\frakh, \quad \varphi(X_0)= X_0, \\
					& \varphi(X_1) = \frac{1}{\sqrt{\sigma_1^2+\rho_0^2}}(\sigma_1 X_1 - \rho_0 X_2), \ 
					\varphi(X_2) = \frac{1}{\sqrt{\sigma_1^2+\rho_0^2}}(\rho_0 X_1 + \sigma_1 X_2). \  
				\end{align}
				 Then, the homogeneous structure is isomorphic to \eqref{eq:Lie algebra of reductive decomposition-k} when we substitute $ t = \sqrt{\sigma_1^2+\rho_0^2},$ and $  k = \dfrac{\rho_0\sigma_1}{\rho_1\sqrt{\sigma_1^2+\rho_0^2}}. $
				Therefore, the reductive decomposition coincides with case (1) in \pref{lm:so(2,2)-Lie algebra}, which implies that the homogeneous structure is isomorphic to $ S_\lambda(t), S_\mu(t) $ or $ S^\mp_{\text{null}}(t) $ from \pref{th:isom-HS}.
				\item When $\rho_0 =\rho_1=0, $ that is, $ \tau_2=\sigma_2=\rho_1=\tau_0=0, $ then we have $ S = t(\theta^0\wedge\theta^1\wedge\theta^2) $ by using the remaining condition $ X_\alpha(\rho_\beta) =  X_\alpha(\sigma_\beta) = X_\alpha(\tau_\beta) = 0.  $
				Hence, we have the following curvature tensor:
				\begin{equation}
					\Rtilde^0_{101} = \Rtilde^0_{202} = \Rtilde^1_{212} = -\frac{(\mu-t)(\mu+t)}{\mu}.
				\end{equation} 
				 When $ S=S_{\text{vol}}(0)=0, \ (\bH^3_1, g_{\lambda\mu\nu} ) $ is called a symmetric space. 
			\end{enumerate}
		\end{enumerate}
		 Then we classified not only all homogeneous structures but also their reductive decompositions and isometry groups on $ (\bH^3_1, g_{\lambda\mu\nu}) $ that is homothetic to $\AdS_3 $
		by computing the canonical connection to determine $ c $ in \pref{lm:so(2,2)-Lie algebra} summarized in \pref{tb:AdS3-HS-Isom-RD}.
	\end{proof}
	
	Therefore, the classification in \pref{tb:metric-coset} of homogeneous pseudo-Riemannian structures, including the induced isometry groups and reductive decompositions, for each metric of Kaluza-Klein type on $\AdS_3 $ has been obtained.
%
%
%
		
	
	\section{Homogeneous almost contact metric structures on $ (\bH^3_1, g_{\lambda\mu\nu}) $ }\label{sc:HACM}
	An almost contact structure on a $ 2n+1$-dimensional manifold $ M $ is a triple $ (\phi,\xi,\eta) $ consisting of a (1,1)-type tensor field $ \phi, $ a vector field $ \xi $ called the Reeb vector field, and a one-form $ \eta $ called a contact form that satisfies the following conditions:
	\begin{equation}\label{eq:almost-contact}
		\phi^2 = -\id + \eta \otimes \xi,\quad \eta(\xi) =1.
	\end{equation}
	Then $ \phi\xi=0, \eta\circ\phi=0, $ and $ \phi $ has a rank $ 2n $. A pseudo-Riemannian metric $ g $ is compatible with an almost contact structure $ (\phi, \xi, \eta) $ if it satisfies the following condition:
	\begin{equation}\label{eq:almost-contact-metric}
		g(\phi X, \phi Y ) = g(X,Y) -\epsilon\eta(X)\eta(Y), \quad(X,Y \in \Gamma(TM))
	\end{equation}
	where $ \epsilon = g(\xi,\xi) $ is $\pm 1$ according to the causal character of the Reeb vector field $ \xi. $
	Moreover, the one-form $ \eta $ is a contact structure on $ M $ if $ \eta $ satisfies $ d\eta(X,Y) = 2g(X, \phi Y) $ for $ X,Y \in \Gamma(TM). $
	An almost contact metric structure $ (g, \phi, \xi, \eta) $ is called $ \alpha$-Sasakian structure, if it satisfies the following condition:
	\begin{equation}\label{eq:alpha-Sasaki}
		(\nabla_X \phi)Y = \alpha(g(X,Y)\xi -\epsilon\eta(Y)X). \quad (X,Y \in \Gamma(TM), \alpha \in C^\infty(M))
	\end{equation}
	If $ \alpha \equiv 1, $ we call $ (M,g, \phi, \xi, \eta) $ a Sasakian manifold.
	\begin{definition}\label{df:homogeneous-almost-contact-metric}
		A homogeneous structure $ S $ on an almost contact metric pseudo-Riemannian manifold $ (M, g, \phi,\xi,\eta) $ is called a homogeneous almost contact metric structure if it satisfies the following equations: 
		\begin{equation}\label{eq:homogeneous-almost-contact-metric}
			\widetilde\nabla \phi = 0,  \quad \widetilde\nabla \xi = 0, \quad \widetilde\nabla \eta = 0,
		\end{equation}
		where $ \widetilde\nabla = \nabla - S. $
	\end{definition}
	
	Almost contact metric structures on Kaluza-Klein type metrics on $\AdS_3$ have been studied in \cite{calvaruso2014metrics}.
	The possible choices of the Reeb vector fields $ \xi \in \Gamma(T\bH^3_1) $ satisfying $ \widetilde\nabla \xi = 0 $ for non-trivial $ S= S_\lambda(t), S_\mu(t) $ and $ S_\nu(t)$ are
	\begin{equation}
		\xi \propto
	\begin{cases}
		     & X_0(-\lambda\neq\mu=\nu),\\
		     & X_1(-\lambda=\nu\neq\mu),\\
		     & X_2(-\lambda=\mu\neq\nu).
	\end{cases} 
	\end{equation}   
	Let $ \Xbar_0 = \frac{1}{\sqrt{|\lambda|}}X_0, \Xbar_1 = \frac{1}{\sqrt{\mu}}X_1, $ and $ \Xbar_2 = \frac{1}{\sqrt{\nu}}X_2 $ be an orthonormal frame of $ g_{\lambda\mu\nu} $ and $ \thetabar^i(i=0,1,2) $ be the dual frame.
	We choose the normalized Reeb vector $ \xi $ with the same sign as $X_0, X_1 $ or $ X_2 $ and consider the following almost contact structures on $(\bH^3_1, g_{\lambda\mu\nu})$:
	\begin{align}
		(\phi_0,\xi_0,\eta_0) & = \lb \Xbar_1\otimes \thetabar^2 - \Xbar_2\otimes\thetabar^1,  \Xbar_0,  \thetabar^0\rb(-\lambda \neq \mu=\nu),\label{eq:AC-1}\\
		(\phi_1,\xi_1,\eta_1) & = \lb \Xbar_0\otimes \thetabar^2 - \Xbar_2\otimes\thetabar^0,  \Xbar_1, \thetabar^1\rb(-\lambda=\nu\neq\mu),\label{eq:AC-2}\\
		(\phi_2,\xi_2,\eta_2) & = \lb \Xbar_1\otimes \thetabar^0 - \Xbar_0\otimes\thetabar^1,  \Xbar_2, \thetabar^2\rb(-\lambda=\mu\neq\nu).\label{eq:AC-3}
	\end{align} 
Then we obtain the following from \eqref{eq:almost-contact-metric}.

\begin{lemma}\label{lm:compatible-condition}
	The almost contact structures described in \eqref{eq:AC-1}, \eqref{eq:AC-2} and \eqref{eq:AC-3} are compatible with the Kaluza-Klein type metric $g_{\lambda\mu\nu}$ if and only if $ \sgn(\mu)= \sgn(\nu) $ for $ (\phi,\xi,\eta) = (\phi_0,\xi_0,\eta_0),  \sgn(-\lambda) \neq \sgn(\nu) $ for $ (\phi,\xi,\eta) = (\phi_1,\xi_1,\eta_1) $ and $ \sgn(-\lambda) \neq \sgn(\mu) $ for $ (\phi,\xi,\eta) = (\phi_2,\xi_2,\eta_2). $
\end{lemma}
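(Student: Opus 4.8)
The plan is to verify the compatibility condition \eqref{eq:almost-contact-metric} by direct evaluation on the orthonormal frame $\{\Xbar_0,\Xbar_1,\Xbar_2\}$, with respect to which $g_{\lambda\mu\nu}$ is diagonal with $g_{\lambda\mu\nu}(\Xbar_0,\Xbar_0)=\sgn(\lambda)$, $g_{\lambda\mu\nu}(\Xbar_1,\Xbar_1)=\sgn(\mu)$, and $g_{\lambda\mu\nu}(\Xbar_2,\Xbar_2)=\sgn(\nu)$, all cross terms vanishing. First I would record the action of each $\phi_i$ on this frame: from \eqref{eq:AC-1} the endomorphism $\phi_0$ annihilates $\xi_0=\Xbar_0$ and restricts to the complementary plane $\vspan_\bR\{\Xbar_1,\Xbar_2\}$ by $\phi_0\Xbar_1=-\Xbar_2$, $\phi_0\Xbar_2=\Xbar_1$; likewise, from \eqref{eq:AC-2} and \eqref{eq:AC-3}, $\phi_1$ annihilates $\Xbar_1$ and acts on $\vspan_\bR\{\Xbar_0,\Xbar_2\}$, while $\phi_2$ annihilates $\Xbar_2$ and acts on $\vspan_\bR\{\Xbar_0,\Xbar_1\}$. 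In each case the constant $\epsilon=g_{\lambda\mu\nu}(\xi,\xi)$ equals $\sgn(\lambda)$, $\sgn(\mu)$, $\sgn(\nu)$, respectively.

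Next I would evaluate \eqref{eq:almost-contact-metric} on every pair of basis vectors and observe that most pairs impose no condition. Any pair involving $\xi$ holds automatically, since $\phi\xi=0$ makes the left-hand side vanish while $g_{\lambda\mu\nu}(\xi,\,\cdot\,)-\epsilon\,\eta(\xi)\eta(\cdot)$ cancels on the right because $\epsilon\,\eta(\xi)^2=g_{\lambda\mu\nu}(\xi,\xi)$. The mixed pair inside the $\phi$-invariant plane also holds automatically, because $\phi$ carries an orthogonal pair to an orthogonal pair. Hence the only nontrivial constraints come from the two diagonal pairs $(e,e)$ with $e$ a frame vector spanning the $\phi$-invariant plane: there the left-hand side $g_{\lambda\mu\nu}(\phi e,\phi e)$ equals the causal character of the image direction, while the right-hand side equals that of $e$ itself, so compatibility reduces to the requirement that the two directions spanning the $\phi$-invariant plane share the same causal character.

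Reading off this requirement in each case then yields the statement: for $(\phi_0,\xi_0,\eta_0)$ the invariant plane is $\vspan_\bR\{\Xbar_1,\Xbar_2\}$, giving $\sgn(\mu)=\sgn(\nu)$; for $(\phi_1,\xi_1,\eta_1)$ it is $\vspan_\bR\{\Xbar_0,\Xbar_2\}$, giving $\sgn(\lambda)=\sgn(\nu)$, equivalently $\sgn(-\lambda)\neq\sgn(\nu)$; and for $(\phi_2,\xi_2,\eta_2)$ it is $\vspan_\bR\{\Xbar_0,\Xbar_1\}$, giving $\sgn(\lambda)=\sgn(\mu)$, equivalently $\sgn(-\lambda)\neq\sgn(\mu)$. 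I expect no genuine obstacle, as the computation is routine; the only point requiring care is that the Reeb term on the right-hand side of \eqref{eq:almost-contact-metric} is weighted by $\epsilon=g_{\lambda\mu\nu}(\xi,\xi)$, so one must track the causal character of $\xi$ rather than normalize as in the Riemannian case.
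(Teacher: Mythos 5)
Your proposal is correct and matches what the paper does: the paper gives no explicit proof, simply asserting that the lemma follows from \eqref{eq:almost-contact-metric}, and the intended argument is exactly your direct evaluation on the orthonormal frame, where the only nontrivial constraints come from the diagonal pairs spanning the $\phi$-invariant plane and reduce to those two directions having the same causal character.
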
 
	\begin{theorem}\cite{calvaruso2014metrics}\label{th:trivial-HACM}
		Every left-invariant almost contact metric structure is a homogeneous almost contact metric structure induced by the canonical connection $ \widetilde\nabla = \nabla -S_0 $ with the isometry group $\SU(1,1) $ and the trivial reductive decomposition.
		Moreover, $(\bH^3_1,g_{\lambda\mu\nu},\phi_l,\xi_l,\eta_l)_{l=1,2} $ are contact metric manifolds if and only if $ |\lambda| = \mu\nu $ for $ l=0 $ and $\mu = \lambda\nu $ for $ l =1. $
		Then the following \pref{tb:compatible-ACS} is obtained.
		
		\begin{table}[ht]
			\caption{The conditions of each almost contact structure being compatible and contact metric}
			\label{tb:compatible-ACS}
			\centering
			\begin{tabular}{|c|c|c|}
				\hline
				almost contact structure & compatible with the metric & contact metric \\
				\hline\hline
				$ (\phi_0,\xi_0,\eta_0) $ & Any & $ |\lambda|=\mu\nu $ \\
				\hline
				$ (\phi_1,\xi_1,\eta_1) $ & $\lambda>0$(Riemannian) & $ \mu = \lambda\nu $\\
				\hline
				$ (\phi_2,\xi_2,\eta_2) $ & $\lambda>0$(Riemannian) & $ \nu = \lambda\mu $ \\
				\hline
			\end{tabular}
		\end{table}
	\end{theorem}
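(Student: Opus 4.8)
The plan is to treat the two assertions separately, first the homogeneity statement and then the contact-metric criterion via a direct structure-equation computation.

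For the first assertion, the key observation is the one already recorded after \eqref{eq:homogeneous-str-different}: when $S=S_0$ the canonical connection $\widetilde\nabla=\nabla-S_0$ has vanishing connection form with respect to the left-invariant frame $\{X_\alpha\}_{\alpha=0}^2$. Consequently $\widetilde\nabla$ annihilates every left-invariant tensor field on $(\bH^3_1,g_{\lambda\mu\nu})$ — in particular $\widetilde\nabla g_{\lambda\mu\nu}=0$, $\widetilde\nabla R=0$ and $\widetilde\nabla S_0=0$, since $g_{\lambda\mu\nu}$, its Levi-Civita curvature $R$, and $S_0$ all have constant components in this frame — and its own curvature $\widetilde R$ vanishes. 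A left-invariant almost contact metric structure $(\phi,\xi,\eta)$ likewise has constant components in $\{\Xbar_\alpha\}$ and $\{\thetabar^\alpha\}$, so $\widetilde\nabla\phi=\widetilde\nabla\xi=\widetilde\nabla\eta=0$ hold automatically; hence $(S_0,\phi,\xi,\eta)$ satisfies \eqref{eq:homogeneous-almost-contact-metric}. Because $\widetilde R=0$ the holonomy is trivial, so by \pref{th:homogeneous-structure-different} the associated transitive isometric action is that of $\SU(1,1)$ by left translations, whose isotropy is trivial, giving the trivial reductive decomposition $\frakm=\mathfrak{su}(1,1)$, $\frakh=\{0\}$.

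For the second assertion I would verify the contact condition $d\eta_l(X,Y)=2g_{\lambda\mu\nu}(X,\phi_lY)$ directly. From \eqref{eq:Lie-alg-su(1,1)} and the normalization $\Xbar_0=X_0/\sqrt{|\lambda|}$, $\Xbar_1=X_1/\sqrt{\mu}$, $\Xbar_2=X_2/\sqrt{\nu}$ one obtains the rescaled brackets, e.g. $[\Xbar_1,\Xbar_2]=-\tfrac{2\sqrt{|\lambda|}}{\sqrt{\mu\nu}}\Xbar_0$, and the Maurer--Cartan relation $d\thetabar^\alpha(\Xbar_\beta,\Xbar_\gamma)=-\thetabar^\alpha([\Xbar_\beta,\Xbar_\gamma])$ then assigns a single nonzero coefficient to each $d\thetabar^\alpha$. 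On the other side, the fundamental two-form $\Phi_l(X,Y)=g_{\lambda\mu\nu}(X,\phi_lY)$ also has a single nonzero component, since $\phi_l$ annihilates $\xi_l$ and maps the orthogonal two-plane into itself. Comparing the two sides therefore reduces the contact condition to one scalar equation per structure.

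Concretely, for $l=0$ I would compute $d\thetabar^0=\tfrac{2\sqrt{|\lambda|}}{\sqrt{\mu\nu}}\thetabar^1\wedge\thetabar^2$ and, using $g_{\lambda\mu\nu}(\Xbar_1,\Xbar_1)=\sgn(\mu)=1$, $\Phi_0=\thetabar^1\wedge\thetabar^2$, so that $d\eta_0=2\Phi_0$ is equivalent to $|\lambda|=\mu\nu$; the cases $l=1,2$ are entirely analogous and yield $\mu=\lambda\nu$ and $\nu=\lambda\mu$. The only genuine subtlety, and the step I would handle most carefully, is the sign bookkeeping: the compatibility restrictions of \pref{lm:compatible-condition} force $\lambda>0$ for $l=1,2$, so the causal characters $\epsilon_\alpha=g_{\lambda\mu\nu}(\Xbar_\alpha,\Xbar_\alpha)$ entering $\Phi_l$ are all $+1$ in those cases, whereas for $l=0$ the metric may be Lorentzian; tracking these signs is what produces the stated equalities rather than their negatives. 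Assembling the three equivalences gives \pref{tb:compatible-ACS}.
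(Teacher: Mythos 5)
Your proposal is correct. Note that the paper itself does not prove \pref{th:trivial-HACM}: it is imported verbatim from \cite{calvaruso2014metrics}, so there is no in-paper argument to compare against; your write-up supplies the missing verification. The first half is sound: since $\nabla-S_0$ has vanishing connection form in the frame $\{X_\alpha\}$, it is the flat left-invariant $(-)$-connection, so every left-invariant tensor (including $g_{\lambda\mu\nu}$, $R$, $S_0$, $\phi$, $\xi$, $\eta$) is $\widetilde\nabla$-parallel, and $\Rtilde=0$ forces the holonomy algebra, hence the isotropy, to be trivial in the Ambrose--Singer reconstruction, with $[X,Y]=S_XY-S_YX=\nabla_XY-\nabla_YX$ recovering $\mathfrak{su}(1,1)$. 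The only small imprecision is your citation of \pref{th:homogeneous-structure-different}, which is stated only under $-\lambda\neq\mu,\ \mu\neq\nu,\ \nu\neq-\lambda$; you should instead invoke the general reconstruction argument (which is what that theorem's proof uses and which applies for all $\lambda,\mu,\nu$). The second half also checks out: with $d\thetabar^\alpha(\Xbar_\beta,\Xbar_\gamma)=-\thetabar^\alpha([\Xbar_\beta,\Xbar_\gamma])$ one gets $d\eta_0=\tfrac{2\sqrt{|\lambda|}}{\sqrt{\mu\nu}}\thetabar^1\wedge\thetabar^2$ against $2\Phi_0=2\,\thetabar^1\wedge\thetabar^2$, giving $|\lambda|=\mu\nu$, and the cases $l=1,2$ give $\mu=\lambda\nu$ and $\nu=\lambda\mu$ once \pref{lm:compatible-condition} pins $\lambda>0$ so that all the $\epsilon_\alpha$ equal $+1$ and both sides carry the same sign. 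This reproduces \pref{tb:compatible-ACS} exactly.
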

	Taking into acount \ref{th:trivial-HACM}, we focus on investigating homogenoeus contact metric structures with non-trivial homogeneous structures $ S \neq S_0. $

	\subsection{The case of \( - \lambda \neq \mu = \nu \)}
	In this case, the almost contact structure $ (\phi,\xi,\eta) = (\phi_0,\xi_0,\eta_0)$ is compatible with $g_{\lambda\mu\nu} $ and satisfied with $ (\nabla-S_\lambda(t))\xi_0 = 0.$ 
	Since $(\bH^3_1,g_{\lambda\mu\nu},\phi_0,\xi_0,\eta_0) $ is an almost contact metric manifold, we compute $ (\nabla_{X_\alpha}\phi)X_\beta(\alpha,\beta=0,1,2) $ using $ (\nabla_X \phi)Y = \nabla_X(\phi Y) - \phi(\nabla_XY) $ as follows: 
	\begin{align}
		(\nabla_{X_0}\phi)X_0 & = 0, \ \qquad (\nabla_{X_0}\phi)X_1 = 0, \quad (\nabla_{X_0}\phi)X_2 = 0,\\
		(\nabla_{X_1}\phi)X_0 & = -\frac{\lambda}{\mu}X_1, (\nabla_{X_1}\phi)X_1 = X_0, \ (\nabla_{X_1}\phi)X_2 = 0,\\
		(\nabla_{X_2}\phi)X_0 & = -\frac{\lambda}{\mu}X_2, (\nabla_{X_2}\phi)X_1 = 0, \quad (\nabla_{X_2}\phi)X_2 = X_0.
	\end{align}
	The non-trivial components of $ g_{\lambda\mu\nu}(X,Y)\xi -\epsilon \eta(Y)X $ are given by
	\begin{align}
		g_{\lambda\mu\nu}(X_1,X_0)\xi -\epsilon \eta(X_0)X_1 & = -\epsilon\sqrt{|\lambda|}X_1, \\
		g_{\lambda\mu\nu}(X_1,X_1)\xi -\epsilon \eta(X_1)X_1 &= \frac{\mu}{\sqrt{|\lambda|}}X_0, \\
		g_{\lambda\mu\nu}(X_2,X_0)\xi -\epsilon \eta(X_0)X_2 & = -\epsilon\sqrt{|\lambda|}X_2,\\
		g_{\lambda\mu\nu}(X_2,X_2)\xi -\epsilon \eta(X_2)X_2 & = \frac{\mu}{\sqrt{|\lambda|}}X_0.
	\end{align}
	In comparison with \eqref{eq:alpha-Sasaki}, $ (\bH^3_1, g_{\lambda\mu\nu}, \phi_0,\xi_0,\eta_0) $ is an $ \alpha $-Sasakian manifold with $ \alpha = \frac{\sqrt{|\lambda|}}{\mu}. $
	In this case, we obtain the following theorem since $ \widetilde\nabla \phi_0 = 0 $ and $ \widetilde\nabla \eta_0 = 0 $ hold for $\widetilde\nabla =\nabla -S_\lambda(t)$.
	
	\begin{theorem}\label{th:timelike-homogeneous-almost-contact}
		Assume that $  -\lambda \neq  \mu = \nu. $ The quadruple $ (S_\lambda(t),\phi_0,\xi_0,\eta_0) $  is a homogeneous almost contact metric structure on the pseudo-Riemannian $ \frac{\sqrt{|\lambda|}}{\mu} $-Sasakian manifold $ (\bH^3_1, g_{\lambda\mu\nu}) $ with the same isometry group and the reductive decomposition in \pref{th:isometry-timelike-1}.
	\end{theorem}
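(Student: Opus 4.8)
The plan is to read the claim straight off \pref{df:homogeneous-almost-contact-metric}. Since $S_\lambda(t)$ is already a homogeneous pseudo-Riemannian structure by \pref{th:homogeneous-str-timelike}, the canonical connection $\widetilde\nabla = \nabla - S_\lambda(t)$ satisfies $\widetilde\nabla g = \widetilde\nabla R = \widetilde\nabla S = 0$, so the only thing left is to verify the three parallelism conditions $\widetilde\nabla\phi_0 = 0$, $\widetilde\nabla\xi_0 = 0$, $\widetilde\nabla\eta_0 = 0$. First I would record the ingredients already in place: the triple $(\phi_0,\xi_0,\eta_0)$ of \eqref{eq:AC-1} is compatible with $g_{\lambda\mu\nu}$ by \pref{lm:compatible-condition} (here $\mu=\nu>0$ have equal sign), and the displayed computation of $(\nabla_{X_\alpha}\phi_0)X_\beta$ matches \eqref{eq:alpha-Sasaki} with $\alpha=\sqrt{|\lambda|}/\mu$, so $(\bH^3_1,g_{\lambda\mu\nu},\phi_0,\xi_0,\eta_0)$ is the asserted $\frac{\sqrt{|\lambda|}}{\mu}$-Sasakian manifold.

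For the parallelism conditions I would exploit the explicit shape of $\widetilde\nabla$. By \eqref{eq:nablatilde-timelike-1} its only nonzero connection forms with respect to $\{X_\alpha\}_{\alpha=0}^2$ are $\widetilde\omega^1_2 = -\widetilde\omega^2_1 = -\frac{\lambda+2\mu-t}{\mu}\theta^0$, so $\widetilde\nabla X_0 = 0$ while $\widetilde\nabla$ acts on the frame only as an infinitesimal rotation of the $(X_1,X_2)$-plane generated along $X_0$. Because $\xi_0 = \Xbar_0 \propto X_0$, this gives $\widetilde\nabla\xi_0 = 0$ at once, and $\widetilde\nabla\eta_0 = 0$ follows either from $\eta_0 = \thetabar^0 \propto \theta^0$ or, more cleanly, from $\widetilde\nabla g = 0$ together with $\eta_0 = g_{\lambda\mu\nu}(\xi_0,\cdot)$.

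The one computation with any content is $\widetilde\nabla\phi_0 = 0$. I would first note that $\phi_0 X_0 = 0$, $\phi_0 X_1 = -X_2$, $\phi_0 X_2 = X_1$ (using $\mu = \nu$), which shows $\phi_0$ is precisely the generator of the rotation that $\widetilde\nabla$ effects on the $(X_1,X_2)$-plane. Then $(\widetilde\nabla_{X_\alpha}\phi_0)X_\beta = \widetilde\nabla_{X_\alpha}(\phi_0 X_\beta) - \phi_0(\widetilde\nabla_{X_\alpha}X_\beta)$ vanishes for all $\alpha,\beta$: the cases $\alpha=1,2$ are immediate since $\widetilde\nabla_{X_1}$ and $\widetilde\nabla_{X_2}$ kill the whole frame, and for $\alpha=0$ the two rotation contributions cancel because $\phi_0$ commutes with the rotation generator $\widetilde\nabla_{X_0}$. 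This establishes that $(S_\lambda(t),\phi_0,\xi_0,\eta_0)$ is a homogeneous almost contact metric structure.

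Finally, for the isometry group and reductive decomposition I would simply observe that the structure tensor is unchanged from \pref{th:isometry-timelike-1}, so the Ambrose--Singer reconstruction attached to $\widetilde\nabla$ reproduces the same group $\SU(1,1)\times U(1)$ (acting effectively as $U(1,1)$) together with the same $\frakg = \frakm \oplus \frakh$; the added parallelism of $(\phi_0,\xi_0,\eta_0)$ makes the almost contact structure invariant under that action, so none of the homogeneous data changes. I expect no genuine obstacle here: the whole argument is a short frame verification, and the only point worth isolating is the structural observation that $\phi_0$ coincides with the holonomy generator of $\widetilde\nabla$, which is exactly what turns $\widetilde\nabla\phi_0 = 0$ into a one-line cancellation rather than a brute-force computation.
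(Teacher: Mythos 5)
Your proposal is correct and follows essentially the same route as the paper: verify compatibility, check the three parallelism conditions for $\widetilde\nabla = \nabla - S_\lambda(t)$, read off the $\frac{\sqrt{|\lambda|}}{\mu}$-Sasakian constant from the displayed $(\nabla_{X_\alpha}\phi_0)X_\beta$ computation, and inherit the isometry group and reductive decomposition from \pref{th:isometry-timelike-1}. Your observation that $\phi_0$ restricted to the $(X_1,X_2)$-plane is exactly the generator of the $\widetilde\omega^1_2$-rotation, so that $\widetilde\nabla\phi_0=0$ reduces to a commutation, is a cleaner packaging of what the paper simply asserts by direct computation, but it is the same verification.
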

	\begin{remark}
		In this case, the almost contact metric structures are contact metric structures if and only if $\mu = \sqrt{|\lambda|}. $
		Therefore, it is equivalent to the condition of being Sasaki.
	\end{remark}
	
	\subsection{The case of \( -\lambda = \nu \neq \mu \)}
	In this case, there are no almost contact metric structures on $ (\bH^3_1, g_{\lambda\mu\nu}) $ from \pref{th:trivial-HACM}. We have the following:

	\begin{theorem}\label{th:spacelike-homogeneous-almost-contact}
		Assume that $ - \lambda = \nu \neq \mu.$
		There are no homogeneous almost contact metric structures on the Lorentzian manifold $ (\bH^3_1, g_{\lambda\mu\nu}). $    	
	\end{theorem}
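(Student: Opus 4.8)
The plan is to show that, for the only admissible non-trivial homogeneous structure, the parallelism forced on the Reeb field clashes with the causal type of the contact distribution. By \pref{th:homogeneous-str-spacelike}, when $-\lambda = \nu \neq \mu$ every homogeneous structure equals $S_0$ or $S_\mu(t)$ with $t \neq 2\nu - \mu$. The structure $S_0$ is already covered by \pref{th:trivial-HACM}, so I would assume toward a contradiction that $(S_\mu(t), \phi, \xi, \eta)$ is a homogeneous almost contact metric structure with canonical connection $\widetilde\nabla = \nabla - S_\mu(t)$.

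The first key step is to locate the Reeb field. Since $\widetilde\nabla \xi = 0$, the field $\xi$ is $\widetilde\nabla$-parallel, so its value at $o$ is invariant under the holonomy of $\widetilde\nabla$. From \eqref{eq:nablatilde-spacelike-1} the connection form has the single non-zero component $\omegatilde^0_2 = \omegatilde^2_0 = -\tfrac{2\nu-\mu-t}{\nu}\theta^1$, and by \eqref{eq:Rtilde-spacelike} the generator $\Rtilde(X_2,X_0)$ of the holonomy algebra acts invertibly on $\vspan_\bR\{X_0, X_2\}$ (here $t \neq 2\nu - \mu$) while annihilating $X_1$. Hence the holonomy-invariant subspace of $T_o\bH^3_1$ is exactly $\vspan_\bR\{X_1\}$; since $X_1$ is itself $\widetilde\nabla$-parallel, every parallel field is a constant multiple of it, and normalizing gives $\xi = \pm\Xbar_1$.

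The decisive step is a signature obstruction. Because $g_{\lambda\mu\nu}(X_1, X_1) = \mu > 0$, the Reeb field $\xi = \pm\Xbar_1$ is spacelike, so $\epsilon = 1$ and the contact distribution is $\{\xi\}^\perp = \vspan_\bR\{\Xbar_0, \Xbar_2\}$. Substituting $Y = \xi$ in the compatibility identity \eqref{eq:almost-contact-metric} and using $\phi\xi = 0$ gives $\eta = \epsilon\, g_{\lambda\mu\nu}(\xi, \cdot)$, so $\phi$ restricts to an endomorphism $J$ of $\{\xi\}^\perp$ with $J^2 = -\id$ and $g_{\lambda\mu\nu}(JX, JY) = g_{\lambda\mu\nu}(X, Y)$. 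Such a compatible complex structure on a plane can exist only if $g_{\lambda\mu\nu}|_{\{\xi\}^\perp}$ is definite, since $X$ and $JX$ always share the same causal character and span the plane. But in the Lorentzian regime $-\lambda = \nu > 0$ the vector $\Xbar_0$ is timelike while $\Xbar_2$ is spacelike, so $g_{\lambda\mu\nu}|_{\{\xi\}^\perp}$ is indefinite, which is the desired contradiction. Equivalently, one may invoke \pref{lm:compatible-condition}: the only compatible almost contact structure with Reeb along $X_1$ is $(\phi_1, \xi_1, \eta_1)$ of \eqref{eq:AC-2}, and it is compatible precisely when $\sgn(-\lambda) \neq \sgn(\nu)$, i.e.\ in the Riemannian case $\lambda > 0$, which is excluded here.

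I expect the main obstacle to be the rigorous pinning of the Reeb direction, namely the claim that $\widetilde\nabla \xi = 0$ admits no parallel section beyond the multiples of $X_1$; this rests on the holonomy computation of \pref{th:isometry-spacelike-1} and on ruling out parallel fields with non-constant frame coefficients. Once $\xi \propto X_1$ is secured, the clash of causal characters is immediate.
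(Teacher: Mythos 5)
Your proof is correct and follows essentially the same route as the paper: the condition $\widetilde\nabla\xi=0$ with $S=S_\mu(t)$ pins $\xi$ to $\pm\Xbar_1$, and a spacelike Reeb field is then excluded because the induced complex structure on $\{\xi\}^\perp$ is incompatible with the Lorentzian signature of that plane (the content of \pref{lm:compatible-condition} and \pref{tb:compatible-ACS}). You merely spell out two steps the paper leaves implicit --- the holonomy argument forcing $\xi\propto X_1$ and the elementary fact that a $(1,1)$-signature plane admits no compatible complex structure --- which is a welcome expansion rather than a different approach.
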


\subsection{The case of \( -\lambda=\mu=\nu \)}
If $ -\lambda=\mu=\nu, $ the almost contact metric structure $ (\phi_0, \xi_0, \eta_0) $ is a Lorentzian $\frac{1}{\sqrt{-\lambda}}$-Sasakian structure on $ (\bH^3_1, g_{\lambda\mu\nu}). $ 
The condition of $\widetilde\nabla \phi = 0 $ implies that $S = -\lambda\theta^2\otimes\theta^0\wedge\theta^1 + \sigma\otimes\theta^1\wedge\theta^2-\lambda\theta^1\otimes\theta^2\wedge\theta^0. $ By an argument analogous to that in \pref{th:homogeneous-str-timelike}, we have the following result. 

\begin{theorem}
	If $ -\lambda = \mu =\nu,  $ the quadruple $ (S_\lambda(t),\phi_0, \xi_0, \eta_0) $ is a homogeneous almost contact metric structure on the Lorentzian $ \frac{1}{\sqrt{-\lambda}} $-Sasakian manifold $ (\bH^3_1, g_{\lambda\mu\nu}, \phi_0, \xi_0, \eta_0). $
\end{theorem}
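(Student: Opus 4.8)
The plan is to verify the three defining conditions of \pref{df:homogeneous-almost-contact-metric} for the quadruple $(S_\lambda(t),\phi_0,\xi_0,\eta_0)$, after first observing that all the ingredients are already available. Since $\mu=\nu>0$, the compatibility criterion $\sgn(\mu)=\sgn(\nu)$ of \pref{lm:compatible-condition} holds, so $(\phi_0,\xi_0,\eta_0)$ of \eqref{eq:AC-1} is an almost contact structure compatible with $g_{\lambda\mu\nu}$; and in the present case $S_\lambda(t)=\mu\theta^2\otimes\theta^0\wedge\theta^1+t\theta^0\otimes\theta^1\wedge\theta^2+\mu\theta^1\otimes\theta^2\wedge\theta^0$ is a homogeneous pseudo-Riemannian structure by \pref{th:homogeneous-str-symmetric}, so the Ambrose--Singer conditions $\widetilde\nabla g=\widetilde\nabla R=\widetilde\nabla S=0$ need not be re-derived. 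Thus the work splits into (a) identifying the underlying Sasakian datum and (b) checking $\widetilde\nabla\phi_0=\widetilde\nabla\xi_0=\widetilde\nabla\eta_0=0$ for $\widetilde\nabla=\nabla-S_\lambda(t)$, exactly along the lines of the analogous \pref{th:timelike-homogeneous-almost-contact}.

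For step (a) I would specialize the Levi-Civita connection forms \eqref{eq:connection-form-general} to $-\lambda=\mu=\nu$ and compute $(\nabla_{X_\alpha}\phi_0)X_\beta$ from $(\nabla_X\phi_0)Y=\nabla_X(\phi_0Y)-\phi_0(\nabla_XY)$, using $\phi_0X_0=0$, $\phi_0X_1=-X_2$, $\phi_0X_2=X_1$. These are precisely the expressions of the case $-\lambda\neq\mu=\nu$ evaluated at $\mu=\nu=-\lambda$. Comparing with \eqref{eq:alpha-Sasaki}, and noting that the Reeb field is timelike so that $\epsilon=g_{\lambda\mu\nu}(\xi_0,\xi_0)=-1$, one reads off $\alpha=\sqrt{|\lambda|}/\mu=1/\sqrt{-\lambda}$ (for instance the relation $(\nabla_{X_1}\phi_0)X_1=X_0$ forces $\alpha\mu/\sqrt{|\lambda|}=1$). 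This identifies $(\bH^3_1,g_{\lambda\mu\nu},\phi_0,\xi_0,\eta_0)$ as a Lorentzian $\tfrac{1}{\sqrt{-\lambda}}$-Sasakian manifold.

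For step (b) I would pass from the $(0,3)$-tensor $S_\lambda(t)$ to its $(1,2)$-form via $g_{\lambda\mu\nu}(S_XY,Z)=S(X,Y,Z)$, being careful with the sign from $g_{\lambda\mu\nu}(X_0,X_0)=\lambda<0$; this gives $S_{X_0}X_1=\tfrac{t}{\mu}X_2$, $S_{X_0}X_2=-\tfrac{t}{\mu}X_1$, $S_{X_1}X_0=-X_2$, $S_{X_1}X_2=-X_0$, $S_{X_2}X_0=X_1$, $S_{X_2}X_1=X_0$, all other components vanishing. Subtracting from $\nabla$, the only surviving components of $\widetilde\nabla$ are $\widetilde\nabla_{X_0}X_1=(1-\tfrac{t}{\mu})X_2$ and $\widetilde\nabla_{X_0}X_2=(\tfrac{t}{\mu}-1)X_1$. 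In particular $\widetilde\nabla_{X_\alpha}X_0=0$ for all $\alpha$, whence $\widetilde\nabla\xi_0=0$; and since $\widetilde\nabla$ is metric with $\eta_0=\epsilon\,g_{\lambda\mu\nu}(\xi_0,\cdot)$ and $\epsilon$ constant, $\widetilde\nabla\eta_0=0$ follows automatically. It then remains to evaluate $(\widetilde\nabla_{X_\alpha}\phi_0)X_\beta=\widetilde\nabla_{X_\alpha}(\phi_0X_\beta)-\phi_0(\widetilde\nabla_{X_\alpha}X_\beta)$; because $\widetilde\nabla_{X_1}$ and $\widetilde\nabla_{X_2}$ annihilate every $X_\gamma$, only $\alpha=0$ can contribute.

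The main obstacle is exactly this remaining cancellation, and I expect the cleanest route is structural rather than term-by-term. On the full frame, the operator $Y\mapsto\widetilde\nabla_{X_0}Y$ is block-diagonal: it kills $X_0$ and acts on the $\{X_1,X_2\}$-plane as $(1-\tfrac{t}{\mu})$ times the rotation generator, while $\phi_0$ kills $X_0$ and acts on the same plane as a quarter-turn rotation. As these two endomorphisms are scalar multiples of the same skew generator on the plane, they commute, so $\widetilde\nabla_{X_0}\phi_0=[\,\widetilde\nabla_{X_0},\phi_0\,]=0$ for every value of $t$. This is the same mechanism that drives \pref{th:homogeneous-str-timelike} and \pref{th:timelike-homogeneous-almost-contact}; once step (a) is correctly specialized, step (b) collapses to this short verification. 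The only genuine bookkeeping risk is tracking the Lorentzian signs through $\epsilon=-1$ and the normalization $\alpha=1/\sqrt{-\lambda}$.
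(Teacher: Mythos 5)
Your proposal is correct and follows essentially the same route as the paper: both specialize the computations of the timelike case \(-\lambda\neq\mu=\nu\) to \(-\lambda=\mu=\nu\), read off the \(\tfrac{1}{\sqrt{-\lambda}}\)-Sasakian constant from \eqref{eq:alpha-Sasaki}, and reduce the parallelism of \((\phi_0,\xi_0,\eta_0)\) under \(\widetilde\nabla=\nabla-S_\lambda(t)\) to the fact that the only surviving part of \(\widetilde\nabla\) is \(\widetilde\nabla_{X_0}\), which acts on the \((X_1,X_2)\)-plane as a multiple of the same rotation generator as \(\phi_0\) and hence commutes with it. The only cosmetic difference is direction --- the paper derives the form of \(S\) from \(\widetilde\nabla\phi_0=0\) and appeals to the argument of \pref{th:homogeneous-str-timelike}, whereas you verify the three conditions directly for the given \(S_\lambda(t)\) --- and your sign bookkeeping (\(\epsilon=-1\), \(\alpha=1/\sqrt{-\lambda}\)) checks out.
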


We obtain the following theorem by substituting $-\lambda=\mu=\nu $ in the proof of \pref{th:isometry-timelike-1}.

\begin{theorem}\label{th:homogeneous-contact-standard-1}
	Assume that $ -\lambda=\mu=\nu. $ If $ t\neq-\lambda, $ the isometry group $ \SU(1,1) \times U(1) $ acts transitively and almost effectively on the Lorentzian $\frac{1}{\sqrt{-\lambda}}$-Sasakian manifold $(\bH^3_1,g_{\lambda\mu\nu}) $ as in \pref{th:isometry-timelike-1} induced by the homogeneous almost contact metric structure $ (S_\lambda(t),\phi_0,\xi_0,\eta_0) $ with the following reductive decomposition $\frakg = \frakm\oplus \frakh:$
	\begin{align}
		\frakh & = \vspan_\bR\lc \begin{pmatrix}
			0 & 0 \\ 0 & \sqrt{-1}
		\end{pmatrix}\rc,\\
		\frakm & = \vspan_\bR\lc \begin{pmatrix}
			\sqrt{-1} & 0 \\ 0 & -\sqrt{-1}\frac{t}{-\lambda}
		\end{pmatrix}, \begin{pmatrix}
			0 & 1 \\ 1 & 0
		\end{pmatrix}, 
		\begin{pmatrix}
			0 & \sqrt{-1} \\ -\sqrt{-1} & 0
		\end{pmatrix}
		\rc.
	\end{align}
	If $ t = -\lambda, $ the isometry group is $ \SU(1,1) $
	with the trivial reductive decomposition.
\end{theorem}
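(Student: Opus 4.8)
The plan is to obtain this statement as the specialization of \pref{th:isometry-timelike-1} to the symmetric value $-\lambda=\mu=\nu$, combined with the homogeneous almost contact part already settled in the preceding theorem. First I would recall that, since $\widetilde\nabla\phi_0=\widetilde\nabla\xi_0=\widetilde\nabla\eta_0=0$ for $\widetilde\nabla=\nabla-S_\lambda(t)$, the quadruple $(S_\lambda(t),\phi_0,\xi_0,\eta_0)$ is already known to be a homogeneous almost contact metric structure on the Lorentzian $\frac{1}{\sqrt{-\lambda}}$-Sasakian manifold $(\bH^3_1,g_{\lambda\mu\nu})$; this is the content of the theorem immediately above, so no new contact-geometric computation is required. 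It therefore remains only to identify the isometry group and the reductive decomposition, and for this I would reuse the argument in the proof of \pref{th:isometry-timelike-1} after setting $\mu=\nu=-\lambda$.

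The key point I would verify is that the proof of \pref{th:isometry-timelike-1} never uses $-\lambda\neq\mu$ in an essential way. The connection form and curvature of $\widetilde\nabla=\nabla-S_\lambda(t)$ recorded in \eqref{eq:nablatilde-timelike-1} and \eqref{eq:Rtilde-timelike} depend on $\lambda,\mu,t$ only through the single factor $\lambda+2\mu-t$, which at $\mu=-\lambda$ becomes $-\lambda-t$. Hence $\Rtilde(X_1,X_2)$ vanishes precisely when $t=-\lambda$, and the dichotomy of \pref{th:isometry-timelike-1} transfers directly: for $t=-\lambda$ the holonomy of $\widetilde\nabla$ is trivial, so the isotropy subgroup is trivial and $\SU(1,1)$ acts transitively and effectively with the trivial reductive decomposition; for $t\neq-\lambda$ the holonomy algebra $\widetilde\frakh$ is one-dimensional, and the Ambrose--Singer reconstruction yields $\widetilde\frakg=\mathfrak{su}(1,1)\oplus\bR$ with the one-parameter isotropy factor integrating to $U(1)$, so that $\SU(1,1)\times U(1)$ acts transitively and almost effectively via the map $\phi$ of \eqref{eq:U(1,1)-action}.

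For the explicit reductive decomposition I would substitute $\mu=-\lambda$ into the $\frakm$ of \pref{th:isometry-timelike-1}: the $(2,2)$-entry of its first generator, $\lb 1+\frac{\lambda-t}{\mu}\rb\sqrt{-1}$, becomes $\lb 1+\frac{\lambda-t}{-\lambda}\rb\sqrt{-1}=\frac{t}{\lambda}\sqrt{-1}=-\frac{t}{-\lambda}\sqrt{-1}$, which is exactly the stated generator, while $\frakh$ is unchanged. To confirm that $\frakg=\frakm\oplus\frakh$ genuinely induces $\widetilde\nabla=\nabla-S_\lambda(t)$, I would then check that $\widetilde\nabla_{X^*}Y^*|_o=-[X,Y]^*|_o$ holds on the frame $\{X_\alpha\}$ at $o=(1,0)$; this is the same Killing-field expansion carried out at the end of \pref{th:isometry-timelike-1} and goes through unchanged after the substitution.

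The main thing to be careful about is that, unlike in \pref{th:isometry-timelike-1}, the full isometry group in the symmetric case is the strictly larger $\SO_0(2,2)\simeq(\SU(1,1)\times\SU(1,1))/\bZ_2$, so the limiting construction could a priori land on a different subgroup. I would therefore point out that the particular homogeneous structure $S_\lambda(t)$ singles out the subgroup $\SU(1,1)\times U(1)$, consistently with reductive decomposition (i) of \pref{lm:so(2,2)-Lie algebra} and \pref{th:homogeneous-str-symmetric}; checking that the reconstruction reproduces this subgroup rather than collapsing or enlarging is the only genuinely non-formal step.
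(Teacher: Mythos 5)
Your proposal is correct and matches the paper's own treatment, which likewise obtains this theorem by substituting $-\lambda=\mu=\nu$ into the proof of \pref{th:isometry-timelike-1} (so that $t\neq\lambda+2\mu$ becomes $t\neq-\lambda$ and the $(2,2)$-entry of the first generator of $\frakm$ becomes $-\sqrt{-1}\,t/(-\lambda)$, exactly as you computed). Your closing remark that the ambient isometry group is the larger $\SO_0(2,2)$ in the symmetric case, and that $S_\lambda(t)$ singles out the subgroup $\SU(1,1)\times U(1)$ consistently with \pref{lm:so(2,2)-Lie algebra}, is a worthwhile extra check that the paper leaves implicit.
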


In this case, $ (S_\lambda(t),\phi_0, \xi_0, \eta_0) $  is the unique non-trivial homogeneous almost contact metric structure up to isomorphism.
	
	
	\section{Homogeneous almost paracontact metric structures on $ (\bH^3_1, g_{\lambda\mu\nu}) $ }\label{sc:HApCM}
	An almost paracontact structure on a $ 2n+1$-dimensional manifold $ M $ is a triple $ (\phi,\xi,\eta) $ consisting of a (1,1)-type tensor field $ \phi, $ a vector field $ \xi $, and a one-form $ \eta $ that satisfies the following conditions:
	\begin{equation}\label{eq:almost-paracontact}
		\phi^2 = \id - \eta \otimes \xi,\quad \eta(\xi) =1.
	\end{equation}
	Then $ \phi\xi=0, \eta\circ\phi=0 $ and $ \phi $ has a rank $ 2n $ as well as an almost contact structure. A pseudo-Riemannian metric $ g $ is compatible with an almost paracontact structure $ (\phi, \xi, \eta) $ if it satisfies the following condition:
	\begin{equation}\label{eq:almost-paracontact-metric}
		g(\phi X, \phi Y ) = -g(X,Y) + \eta(X)\eta(Y).\quad(X,Y \in \Gamma(TM))
	\end{equation}
	Any pseudo-Riemannian metric $ g $ compatible with a given paracontact structure is necessarily of signature $ (n,n+1) $; hence the Reeb vector field $\xi $ must be spacelike.
	The one-form $ \eta $ is called a paracontact structure on $ M $ if $ \eta $ satisfies $ d\eta(X,Y) = 2g(X, \phi Y) $ for $ X,Y \in \Gamma(TM). $
	An almost paracontact metric structure $ (g, \phi, \xi, \eta) $ is called a $ \beta$-paraSasakian structure, if it satisfies the following condition:
	\begin{equation}\label{eq:beta-paraSasaki}
		(\nabla_X \phi)Y = \beta(g(X,Y)\xi -\eta(Y)X). \quad (X,Y \in \Gamma(TM), \beta \in C^\infty(M))
	\end{equation}
	If $ \beta \equiv -1, $ we call $ (M,g, \phi, \xi, \eta) $ a paraSasakian manifold.
	\begin{definition}\label{df:homogeneous-almost-paracontact-metric}
		A homogeneous structure $ S $ on an almost paracontact metric pseudo-Riemannian manifold $ (M, g, \phi,\xi,\eta) $ is called a homogeneous almost paracontact metric structure if it satisfies the following equations: 
		\begin{equation}\label{eq:homogeneous-almost-paracontact-metric}
			\widetilde\nabla \phi = 0,  \quad \widetilde\nabla \xi = 0, \quad \widetilde\nabla \eta = 0, 
		\end{equation}
		where $ \widetilde\nabla = \nabla - S. $
	\end{definition}
	Almost paracontact metric structures on Kaluza-Klein type metrics on $\AdS_3$ have also been studied in \cite{calvaruso2014metrics}.
	Hereafter, we assume that $ \lambda<0 $ since $ (\bH^3_1, g_{\lambda\mu\nu}) $ with almost paracontact metric structures must be Lorentzian.
	
	The following almost paracontact structures in \eqref{eq:ApC-2} and \eqref{eq:ApC-3} are compatible with the Kaluza-Klein type metric $g_{\lambda\mu\nu}$ on $ \bH^3_1 $.
	\begin{align}
		(\phitilde_1,\xi_1,\eta_1) & = \lb -\Xbar_2\otimes \thetabar^0 - \Xbar_0\otimes\thetabar^2,  \Xbar_1, \thetabar^1\rb,\label{eq:ApC-2}\\
		(\phitilde_2,\xi_2,\eta_2) & = \lb \Xbar_1\otimes \thetabar^0  + \Xbar_0\otimes\thetabar^1,  \Xbar_2, \thetabar^2\rb.\label{eq:ApC-3}
	\end{align} 
	By a similar argument as for almost contact structures, the following hold.
	\begin{theorem}\cite{calvaruso2014metrics}\label{th:trivial-HApCM}
		Every left-invariant  almost paracontact metric structure is a homogeneous almost paracontact metric structure induced by the canonical connection $ \widetilde\nabla = \nabla -S_0 $ with the isometry group $\SU(1,1) $ and the trivial reductive decomposition.
		Moreover, $(\bH^3_1,g_{\lambda\mu\nu},\phitilde_l,\xi_l,\eta_l)_{l=1,2} $ are paracontact metric manifolds if and only if $\mu = -\lambda\nu $ for $ l=1 $ and $\nu =-\lambda\mu $ for $ l =2. $
		Then the following \pref{tb:compatible-ApCS} is obtained.
		\begin{table}[ht]
			\caption{The conditions of each almost paracontact structure being paracontact metric }
			\label{tb:compatible-ApCS}
			\centering
			\begin{tabular}{|c|c|}
				\hline
				almost paracontact structure & paracontact metric \\
				\hline
				$ (\phitilde_1,\xi_1,\eta_1) $ & $ \mu =-\lambda\nu $\\
				\hline
				$ (\phitilde_2,\xi_2,\eta_2) $ & $ \nu = -\lambda\mu $\\
				\hline
			\end{tabular}
		\end{table}
	\end{theorem}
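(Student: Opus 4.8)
The plan is to dispatch the two assertions independently: the homogeneity claim follows from left-invariance with essentially no computation, while the paracontact metric condition reduces to a single scalar identity per structure.

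For the first assertion, I would begin by identifying $\widetilde\nabla = \nabla - S_0$ with the canonical connection of the trivial reductive decomposition $\frakm = \mathfrak{su}(1,1)$, $\frakh = \{0\}$ of \pref{th:homogeneous-structure-different}, for which $\SU(1,1)$ acts on $\bH^3_1$ by left translation with trivial isotropy. As already noted after \eqref{eq:homogeneous-str-different}, the connection form of $\widetilde\nabla$ vanishes in the left-invariant frame $\{X_\alpha\}_{\alpha=0}^2$, so that $\widetilde\nabla X_\alpha = 0$ for every $\alpha$ and hence every left-invariant tensor field on $\bH^3_1$ is $\widetilde\nabla$-parallel. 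The crux is then the observation that a left-invariant almost paracontact metric structure is built from left-invariant data $(\phitilde,\xi,\eta)$, so the conditions \eqref{eq:homogeneous-almost-paracontact-metric} hold automatically; simultaneously $\widetilde\nabla g = \widetilde\nabla R = \widetilde\nabla S_0 = 0$ because $g_{\lambda\mu\nu}$, its curvature tensor, and $S_0$ are themselves left-invariant. This yields the homogeneous almost paracontact metric structure with isometry group $\SU(1,1)$ and trivial reductive decomposition, exactly mirroring the contact analogue \pref{th:trivial-HACM}.

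For the paracontact metric assertion, I would invoke the defining identity $d\eta(X,Y) = 2g(X,\phitilde Y)$. Since $\eta_l$, $\phitilde_l$ and $g_{\lambda\mu\nu}$ are all left-invariant, it suffices to test this on the orthonormal frame $\{\Xbar_\alpha\}_{\alpha=0}^2$, where $d\eta(\Xbar_a,\Xbar_b) = -\eta([\Xbar_a,\Xbar_b])$ and the brackets are those of \eqref{eq:Lie-alg-su(1,1)} rescaled by the factors $\sqrt{|\lambda|},\sqrt{\mu},\sqrt{\nu}$. For $l = 1$, with $\eta_1 = \thetabar^1$, $\phitilde_1\Xbar_0 = -\Xbar_2$ and $\phitilde_1\Xbar_2 = -\Xbar_0$, every component except the $(\Xbar_0,\Xbar_2)$-one vanishes on both sides, and the surviving equation collapses to $2\sqrt{\mu}/\sqrt{|\lambda|\nu} = 2$, i.e. $\mu = -\lambda\nu$. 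The case $l = 2$ is the mirror computation on the pair $(\Xbar_0,\Xbar_1)$ with $\eta_2 = \thetabar^2$, producing $\nu = -\lambda\mu$. Throughout I would use $|\lambda| = -\lambda$, which is legitimate because the paracontact hypothesis forces the metric to be Lorentzian, hence $\lambda < 0$.

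The conceptual weight sits entirely in the first step: once $\nabla - S_0$ is recognized as the flat connection making left-invariant objects parallel, homogeneity of any left-invariant geometric structure is immediate. I expect no structural difficulty in the second step, only bookkeeping: the simultaneous presence of the signature sign $g(\Xbar_0,\Xbar_0) = -1$ and the normalization factors $\sqrt{|\lambda|},\sqrt{\mu},\sqrt{\nu}$ is the one genuinely error-prone place, since a sign slip would interchange which of $\mu = -\lambda\nu$ and $\nu = -\lambda\mu$ attaches to which structure. That careful tracking, rather than any conceptual obstacle, is the main thing to get right.
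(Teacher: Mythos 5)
Your proposal is correct and follows essentially the route the paper intends: the paper itself defers this statement to the citation of Calvaruso--Perrone (saying only ``by a similar argument as for almost contact structures''), and your argument supplies exactly the expected details. The key observation---that $\widetilde\nabla=\nabla-S_0$ has vanishing connection form in the left-invariant frame $\{X_\alpha\}$, so every left-invariant tensor is $\widetilde\nabla$-parallel---is the one the paper records after \eqref{eq:homogeneous-str-different}, and your frame computation of $d\eta_l(X,Y)=2g(X,\phitilde_l Y)$ reproduces the conditions $\mu=-\lambda\nu$ and $\nu=-\lambda\mu$ with the correct assignment to $l=1$ and $l=2$.
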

	
	Taking into account \pref{th:trivial-HApCM}, we focus on investigating homogeneous almost paracontact metric structures with non-trivial homogeneous structures $ S \neq S_0 $ on $ (\bH^3_1, g_{\lambda\mu\nu}). $

	\subsection{The case of \(-\lambda\neq\mu=\nu \)}
	In this case,  we have the following:
	
	\begin{theorem}\label{th:timelike-homogeneous-almost-paracontact}
		Assume that $ - \lambda \neq \mu = \nu.$
		There are no non-trivial homogeneous almost paracontact metric structures on the Lorentzian manifold $ (\bH^3_1, g_{\lambda\mu\nu}). $  
	\end{theorem}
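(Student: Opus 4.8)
The plan is to reduce the existence question to a causal-character obstruction, using the classification of homogeneous structures already obtained in this case. Since we assume $-\lambda\neq\mu=\nu$, \pref{th:homogeneous-str-timelike} tells us that every homogeneous structure on $(\bH^3_1,g_{\lambda\mu\nu})$ is of the form $S=S_\lambda(t)$, and it is non-trivial precisely when $t\neq\lambda+2\mu$, the value $t=\lambda+2\mu$ recovering $S_0$. Thus it suffices to rule out a homogeneous almost paracontact metric structure whose underlying homogeneous structure is $S_\lambda(t)$ with $t\neq\lambda+2\mu$.

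First I would record the two defining constraints on the Reeb field. On the one hand, any pseudo-Riemannian metric compatible with an almost paracontact structure has signature $(1,2)$, so the Reeb vector field $\xi$ must be spacelike, with $g_{\lambda\mu\nu}(\xi,\xi)=1$; this is the key geometric input. On the other hand, \pref{df:homogeneous-almost-paracontact-metric} requires $\widetilde\nabla\xi=0$ for $\widetilde\nabla=\nabla-S_\lambda(t)$, so $\xi$ must be a $\widetilde\nabla$-parallel unit vector field.

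Next I would identify all $\widetilde\nabla$-parallel vector fields. Using the canonical connection computed in the proof of \pref{th:isometry-timelike-1}, whose only non-trivial connection form is $\omegatilde^1_2=-\omegatilde^2_1=-\frac{\lambda+2\mu-t}{\mu}\theta^0$, one sees that $\widetilde\nabla X_0=0$ while $X_1$ and $X_2$ are rotated into one another along the $X_0$-direction. Equivalently, the holonomy algebra $\widetilde\frakh=\vspan_\bR\{\Rtilde(X_1,X_2)\}$ acts as a non-zero rotation of the spacelike plane $\vspan_\bR\{X_1,X_2\}$, nonzero exactly because $t\neq\lambda+2\mu$, and annihilates $X_0$. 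Hence the space of holonomy-invariant vectors, and therefore of $\widetilde\nabla$-parallel vector fields, is exactly $\vspan_\bR\{X_0\}$; this recovers the assertion already used for $S_\lambda(t)$ that any parallel Reeb candidate satisfies $\xi\propto X_0$.

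The conclusion then follows from a causal contradiction: since $\lambda<0$, the vector $X_0$ is timelike, so any $\widetilde\nabla$-parallel Reeb candidate $\xi\propto X_0$ is timelike, which is incompatible with the requirement that $\xi$ be spacelike. Therefore no non-trivial homogeneous almost paracontact metric structure exists. There is no hard computation here; the only point requiring care is the determination that $\vspan_\bR\{X_0\}$ exhausts the parallel directions, which amounts to checking that the endomorphism $\Rtilde(X_1,X_2)$ has no fixed vectors off the $X_0$-axis. This is immediate from its explicit action $X_1\mapsto X_2,\ X_2\mapsto -X_1$ up to a nonzero factor, so the genuine content of the statement is the mismatch of causal characters rather than any calculation involving $\phitilde$ or $\eta$.
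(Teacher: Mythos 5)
Your proposal is correct and follows essentially the same route as the paper: the paper's proof simply observes that $\widetilde\nabla\xi=0$ with $S=S_\lambda(t)$ forces $\xi\propto X_0$, which is timelike and therefore cannot be the Reeb vector field of an almost paracontact metric structure. You merely make explicit the holonomy argument (that $\Rtilde(X_1,X_2)$ rotates the spacelike plane $\vspan_\bR\{X_1,X_2\}$ nontrivially when $t\neq\lambda+2\mu$, so $\vspan_\bR\{X_0\}$ exhausts the parallel directions) that the paper leaves implicit.
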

	\begin{proof}
		In this case, the condition $ \widetilde\nabla \xi =0 $ implies $ \xi $ is proportional to $ X_0 $ since $ S = S_\lambda(t). $ However, a timelike vector cannot be the Reeb vector field of any almost paracontact metric structure. 
	\end{proof}
	\subsection{The case of \(-\lambda=\nu\neq\mu\)}
	In this case, the almost paracontact structure $ (\phi,\xi,\eta) = (\phitilde_1,\xi_1,\eta_1)$ is compatible with $g_{\lambda\mu\nu} $ and satisfied with $ (\nabla-S_\mu(t))\xi_1 = 0.$ 
	Since $(\bH^3_1,g_{\lambda\mu\nu},\phitilde_1,\xi_1,\eta_1) $ is an almost paracontact metric manifold, we compute $ (\nabla_{X_\alpha}\phi)X_\beta(\alpha,\beta=0,1,2) $ using $ (\nabla_X \phi)Y = \nabla_X(\phi Y) - \phi(\nabla_XY) $ as follows: 
	\begin{align}
		(\nabla_{X_0}\phi)X_0 & = X_1, \quad (\nabla_{X_0}\phi)X_1 = \frac{\mu}{\nu}X_0, \ (\nabla_{X_0}\phi)X_2 = 0,\\
		(\nabla_{X_1}\phi)X_0 & = 0, \qquad (\nabla_{X_1}\phi)X_1 = 0, \qquad (\nabla_{X_1}\phi)X_2 = 0,\\
		(\nabla_{X_2}\phi)X_0 & = 0, \qquad (\nabla_{X_2}\phi)X_1 = \frac{\mu}{\nu}X_2, \ (\nabla_{X_2}\phi)X_2 = -X_1.
	\end{align}
	The non-trivial components of $ g_{\lambda\mu\nu}(X,Y)\xi - \eta(Y)X $ are given by
	\begin{align}
		g_{\lambda\mu\nu}(X_0,X_0)\xi - \eta(X_0)X_0 & = -
		\frac{\nu}{\sqrt{\mu}}X_1, \\
		g_{\lambda\mu\nu}(X_0,X_1)\xi - \eta(X_1)X_0 &= -\sqrt{\mu}X_0, \\
		g_{\lambda\mu\nu}(X_2,X_1)\xi - \eta(X_1)X_2 & = -\sqrt{\mu}X_2,\\
		g_{\lambda\mu\nu}(X_2,X_2)\xi - \eta(X_2)X_2 & = \frac{\nu}{\sqrt{\mu}}X_1.
	\end{align}
	In comparison with \eqref{eq:beta-paraSasaki}, $ (\bH^3_1, g_{\lambda\mu\nu}, \phitilde_1,\xi_1,\eta_1) $ is a $ \beta $-paraSasakian manifold with $ \beta = -\frac{\sqrt{\mu}}{\nu}. $
	Then we obtain the following theorem since $ \widetilde\nabla \phitilde_1 = 0 $ and $ \widetilde\nabla \eta_1 = 0 $ hold for $\widetilde\nabla = \nabla - S_\mu(t)$.
	
	\begin{theorem}\label{th:spacelike-homogeneous-almost-paracontact}
		Assume that $  -\lambda =  \nu \neq \mu. $ The quadruple $ (S_\mu(t),\phitilde_1,\xi_1,\eta_1) $  is a homogeneous almost paracontact metric structure on the Lorentzian $ -\frac{\sqrt{\mu}}{\nu} $-paraSasakian manifold $ (\bH^3_1, g_{\lambda\mu\nu}) $ with the same isometry group and the reductive decomposition in \pref{th:isometry-spacelike-1}.
	\end{theorem}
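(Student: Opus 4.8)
The plan is to verify the three defining conditions \eqref{eq:homogeneous-almost-paracontact-metric} for $\widetilde\nabla=\nabla-S_\mu(t)$ and the $\beta$-paraSasakian identity, using that $S_\mu(t)$ is already known to be a homogeneous pseudo-Riemannian structure by \pref{th:homogeneous-str-spacelike}. In particular $\widetilde\nabla g_{\lambda\mu\nu}=0$, $\widetilde\nabla R=0$ and $\widetilde\nabla S_\mu(t)=0$ hold for free, so the only genuinely new content is that the almost paracontact triple $(\phitilde_1,\xi_1,\eta_1)$ of \eqref{eq:ApC-2} is compatible with $g_{\lambda\mu\nu}$, is Lorentzian $-\sqrt{\mu}/\nu$-paraSasakian, and is annihilated by $\widetilde\nabla$. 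Once this is done, the isometry group $\SU(1,1)\times\bR$ and its reductive decomposition require no further work: they are inherited verbatim from \pref{th:isometry-spacelike-1}, since the underlying homogeneous structure is the same $S_\mu(t)$.

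\textbf{The paraSasakian identity.} Here $\xi_1=\Xbar_1$ is spacelike (consistent with the standing hypothesis $\lambda<0$), so $(\phitilde_1,\xi_1,\eta_1)$ is a bona fide almost paracontact metric structure. To confirm \eqref{eq:beta-paraSasaki} it suffices by bilinearity to compare both sides on the basis $\{X_\alpha\}$, and this is exactly the content of the two displays preceding the theorem: reading off $(\nabla_{X_\alpha}\phitilde_1)X_\beta$ and multiplying the listed values of $g_{\lambda\mu\nu}(X_\alpha,X_\beta)\xi_1-\eta_1(X_\beta)X_\alpha$ by $\beta=-\sqrt{\mu}/\nu$ yields agreement in every case (for instance $(\nabla_{X_0}\phitilde_1)X_0=X_1=-\tfrac{\sqrt{\mu}}{\nu}\bigl(-\tfrac{\nu}{\sqrt{\mu}}X_1\bigr)$). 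Hence the manifold is Lorentzian $-\sqrt{\mu}/\nu$-paraSasakian.

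\textbf{The parallel conditions.} I would work in the left-invariant frame $\{X_\alpha\}$, in which $\phitilde_1,\xi_1,\eta_1$ have constant components, so covariant differentiation reduces to algebra with the canonical connection matrix: $\widetilde\nabla_{X_k}\phitilde_1=[\widetilde\omega(X_k),\phitilde_1]$, with $\widetilde\nabla_{X_k}\xi_1=\widetilde\omega(X_k)\xi_1$ and the dual formula for $\eta_1$. By \eqref{eq:nablatilde-spacelike-1} the only nonzero entries of $\widetilde\omega$ are $\widetilde\omega^0_2=\widetilde\omega^2_0=-\tfrac{2\nu-\mu-t}{\nu}\theta^1$. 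Since $\widetilde\omega$ carries neither an upper nor a lower index $1$, we obtain $\widetilde\nabla\xi_1=0$ and $\widetilde\nabla\eta_1=0$ immediately. For $\widetilde\nabla\phitilde_1$ only $\widetilde\omega(X_1)$ can contribute, and the key observation is that, in view of $\phitilde_1=-(X_0\otimes\theta^2+X_2\otimes\theta^0)$, we have the exact proportionality $\widetilde\omega(X_1)=\tfrac{2\nu-\mu-t}{\nu}\,\phitilde_1$; the canonical connection boosts the contact distribution precisely along $\phitilde_1$. Therefore $\widetilde\nabla_{X_1}\phitilde_1=\bigl[\tfrac{2\nu-\mu-t}{\nu}\phitilde_1,\phitilde_1\bigr]=0$, giving $\widetilde\nabla\phitilde_1=0$.

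\textbf{Main point and conclusion.} The only delicate step is $\widetilde\nabla\phitilde_1=0$, and what makes it work is the symmetry $\widetilde\omega^0_2=\widetilde\omega^2_0$ (equivalently $\lambda=-\nu$): the mixing of the timelike $X_0$ and spacelike $X_2$ is a boost rather than a rotation, which is exactly what forces $\widetilde\omega(X_1)$ to be a scalar multiple of the swap endomorphism $\phitilde_1$. Had this block been antisymmetric, as in the Riemannian/contact situation, the bracket with $\phitilde_1$ would not vanish and no such parallel paracontact structure would exist. Having established compatibility, the $-\sqrt{\mu}/\nu$-paraSasakian identity, and the three conditions \eqref{eq:homogeneous-almost-paracontact-metric}, I conclude that $(S_\mu(t),\phitilde_1,\xi_1,\eta_1)$ is a homogeneous almost paracontact metric structure, and I transport the isometry group $\SU(1,1)\times\bR$ and its reductive decomposition $\frakg=\frakh\oplus\frakm$ directly from \pref{th:isometry-spacelike-1}.
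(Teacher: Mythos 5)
Your proposal is correct and follows essentially the same route as the paper: the $-\sqrt{\mu}/\nu$-paraSasakian identity is read off from the two displayed computations of $(\nabla_{X_\alpha}\phitilde_1)X_\beta$ and $g_{\lambda\mu\nu}(X_\alpha,X_\beta)\xi_1-\eta_1(X_\beta)X_\alpha$, the parallel conditions $\widetilde\nabla\phitilde_1=\widetilde\nabla\xi_1=\widetilde\nabla\eta_1=0$ follow from the canonical connection form \eqref{eq:nablatilde-spacelike-1}, and the isometry group and reductive decomposition are inherited from \pref{th:isometry-spacelike-1}. Your explicit observation that $\widetilde\omega(X_1)=\tfrac{2\nu-\mu-t}{\nu}\phitilde_1$ (so that $\widetilde\nabla\phitilde_1$ vanishes as a bracket of proportional endomorphisms) is a slightly more detailed justification than the paper's bare assertion that these conditions hold, but it is the same argument.
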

	\begin{remark}
		In this case, the almost paracontact metric structures are paracontact metric structures if and only if $\nu = \sqrt{\mu}. $
		Therefore, it is equivalent to the condition of being paraSasaki.
	\end{remark}
	
	\subsection{The case of \( -\lambda=\mu=\nu \)}
	If $ -\lambda=\mu=\nu, $ the almost paracontact metric structure $(\phitilde_1,\xi_1,\eta_1) $ is a Lorentzian $ -\frac{1}{\sqrt{\mu}} $-paraSasakian structure on $ (\bH^3_1, g_{\lambda\mu\nu}).$
	The condition of $\widetilde\nabla \phi = 0 $ implies that $S = \mu\theta^2\otimes\theta^0\wedge\theta^1 + \mu\theta^0\otimes\theta^1\wedge\theta^2+\tau\otimes\theta^2\wedge\theta^0. $ By an argument analogous to that in \pref{th:homogeneous-str-spacelike}, we obtain the following result.
	
	\begin{theorem}
		If $ -\lambda = \mu =\nu,  $ the quadruple $(S_\mu(t),\phitilde_1, \xi_1, \eta_1) $ is a homogeneous almost paracontact metric structure on the Lorentzian $ -\frac{1}{\sqrt{\mu}} $-paraSasakian manifold $ (\bH^3_1, g_{\lambda\mu\nu}, \phitilde_1, \xi_1, \eta_1). $
	\end{theorem}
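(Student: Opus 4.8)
The plan is to mirror the argument of \pref{th:spacelike-homogeneous-almost-paracontact}, specialized to the symmetric case $\nu=\mu$, and to verify the three parallelism conditions $\widetilde\nabla\phitilde_1=0$, $\widetilde\nabla\xi_1=0$, $\widetilde\nabla\eta_1=0$ for $\widetilde\nabla=\nabla-S_\mu(t)$. First I would record that, since $-\lambda=\mu=\nu$ forces $\lambda<0$ (the Lorentzian case), the Reeb field $\xi_1=\Xbar_1$ is spacelike, so the triple $(\phitilde_1,\xi_1,\eta_1)$ of \eqref{eq:ApC-2} is a genuine almost paracontact metric structure compatible with $g_{\lambda\mu\nu}$ by \pref{th:trivial-HApCM}. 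In the symmetric frame all three normalizations coincide, $\Xbar_\alpha=\tfrac{1}{\sqrt{\mu}}X_\alpha$, so $\phitilde_1$ preserves the plane spanned by $X_0$ and $X_2$ and annihilates $X_1$.

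Next I would establish the $\beta$-paraSasakian property. Using the Levi-Civita connection forms \eqref{eq:connection-form-general} specialized to $-\lambda=\mu=\nu$, I would compute $(\nabla_{X_\alpha}\phitilde_1)X_\beta$ for all $\alpha,\beta$ and compare with the right-hand side $\beta\bigl(g_{\lambda\mu\nu}(X,Y)\xi_1-\eta_1(Y)X\bigr)$ of \eqref{eq:beta-paraSasaki}. This is the same computation carried out in the subsection preceding \pref{th:spacelike-homogeneous-almost-paracontact} with $\nu$ replaced by $\mu$; it yields the paraSasakian constant $\beta=-\sqrt{\mu}/\nu=-1/\sqrt{\mu}$, confirming that $(\bH^3_1,g_{\lambda\mu\nu},\phitilde_1,\xi_1,\eta_1)$ is Lorentzian $-\tfrac{1}{\sqrt{\mu}}$-paraSasakian as claimed.

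For the homogeneity, I would first impose $\widetilde\nabla\phitilde_1=0$. Writing $S$ in the general form \eqref{eq:homogeneous-str} and expanding $(\widetilde\nabla_{X_\alpha}\phitilde_1)X_\beta=(\nabla_{X_\alpha}\phitilde_1)X_\beta-[S_{X_\alpha},\phitilde_1]X_\beta$, the vanishing condition pins down the $\rho$- and $\sigma$-components and forces $S=\mu\theta^2\otimes\theta^0\wedge\theta^1+\mu\theta^0\otimes\theta^1\wedge\theta^2+\tau\otimes\theta^2\wedge\theta^0$ with $\tau$ still free, as already indicated in the text above the statement. Imposing in addition $\widetilde\nabla S=0$ and reusing verbatim the argument of \pref{th:homogeneous-str-spacelike} (with $\nu=\mu$) then forces $\tau_0=\tau_2=0$ and $\tau_1=t$ constant, so $S=S_\mu(t)$. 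Finally, since $\xi_1=\Xbar_1$ lies in the $X_1$-direction along which $S_\mu(t)$ acts trivially, a direct check gives $\widetilde\nabla\xi_1=0$; and because $\widetilde\nabla=\nabla-S_\mu(t)$ is a metric connection and $\eta_1$ is the $g_{\lambda\mu\nu}$-dual of $\xi_1$, this immediately yields $\widetilde\nabla\eta_1=0$.

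The main obstacle is the verification $\widetilde\nabla\phitilde_1=0$: one must check that the free component $\tau=t\theta^1$ of $S_\mu(t)$ is compatible with $\phitilde_1$ so that parallelism holds for every value of $t$. Since $\phitilde_1$ preserves the plane spanned by $X_0$ and $X_2$ and kills the Reeb direction $X_1$, while the $\tau$-term of $S_\mu(t)$ acts only along $X_1$, I expect the $t$-dependent contributions to drop out of $[S_{X_\alpha},\phitilde_1]$; confirming this cancellation is the one genuinely computational point, and it is exactly what singles out $S_\mu(t)$ (rather than the other homogeneous structures of \pref{th:homogeneous-str-symmetric}) as the homogeneous structure compatible with $(\phitilde_1,\xi_1,\eta_1)$.
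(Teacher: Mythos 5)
Your proposal is correct and follows essentially the same route as the paper: the paraSasakian constant $-1/\sqrt{\mu}$ is obtained by specializing the computation of the preceding subsection to $\nu=\mu$, the condition $\widetilde\nabla\phitilde_1=0$ pins $S$ down to the form $\mu\theta^2\otimes\theta^0\wedge\theta^1+\mu\theta^0\otimes\theta^1\wedge\theta^2+\tau\otimes\theta^2\wedge\theta^0$, and the argument of \pref{th:homogeneous-str-spacelike} then yields $S=S_\mu(t)$ together with $\widetilde\nabla\xi_1=\widetilde\nabla\eta_1=0$. You correctly note the one point the paper leaves implicit, namely that in the symmetric case $\widetilde\nabla R=0$ is vacuous and it is $\widetilde\nabla\phitilde_1=0$ that must replace it in fixing the $\rho$- and $\sigma$-components.
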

	
	We obtain the following theorem by substituting $-\lambda=\mu=\nu $ in the proof of \pref{th:isometry-spacelike-1}.
	
	\begin{theorem}\label{th:homogeneous-paracontact-standard}
		Assume that $ -\lambda=\mu=\nu. $ If $ t\neq\mu, $ the isometry group $ \SU(1,1) \times \bR $ acts transitively and effectively on the Lorentzian $ -\frac{1}{\sqrt{\mu}} $-paraSasakian manifold $(\bH^3_1,g_{\lambda\mu\nu}) $ as in \pref{th:isometry-spacelike-1} induced by the homogeneous almost paracontact metric structure  $ (S_\mu(t),\phitilde_1,\xi_1,\eta_1) $ with the following reductive decomposition $\frakg = \frakm\oplus \frakh:$
		\begin{align}
			\frakh & = \vspan_\bR\lc 
			\begin{pmatrix}
				0 & 1 \\
				1 & 0
			\end{pmatrix} + D \rc,\\ 
			\frakm & = \vspan_\bR\lc \begin{pmatrix}
				\sqrt{-1} & 0 \\ 0 & -\sqrt{-1}
			\end{pmatrix}, \frac{\mu+t}{2\mu}\begin{pmatrix}
				0 & 1 \\ 1 & 0
			\end{pmatrix} - \frac{\mu -t}{2\mu}D, \begin{pmatrix}
				0 & \sqrt{-1} \\ -\sqrt{-1} & 0
			\end{pmatrix} \rc,
		\end{align}
		If $ t = \mu, $ the isometry group is $ \SU(1,1) $
		with the trivial reductive decomposition.
	\end{theorem}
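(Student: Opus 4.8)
The plan is to obtain this statement as the degenerate specialization $\nu=\mu$ of \pref{th:isometry-spacelike-1}, supplemented by a check that the almost paracontact data are parallel for the canonical connection. By the (unnumbered) theorem preceding this one, the condition $\widetilde\nabla\phitilde_1=0$ already forces $S$ to have the form in \eqref{eq:s3} with $\mu=\nu$, so the only homogeneous almost paracontact metric structures to consider are the $(S_\mu(t),\phitilde_1,\xi_1,\eta_1)$, and the content reduces to reading off the isometry group and the reductive decomposition while confirming the defining equations \eqref{eq:homogeneous-almost-paracontact-metric}.

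First I would substitute $\nu=\mu$ into the canonical connection and its curvature computed in \pref{th:isometry-spacelike-1}. The curvature \eqref{eq:Rtilde-spacelike} becomes $\Rtilde(X_2,X_0)=\frac{2(\mu-t)}{\mu}(\theta^2\otimes X_0+\theta^0\otimes X_2)$, so the holonomy algebra $\widetilde\frakh=\vspan_\bR\{\Rtilde(X_2,X_0)\}$ is one-dimensional precisely when $t\neq\mu$ and vanishes when $t=\mu$. This gives the stated dichotomy: for $t=\mu$ the isotropy is trivial, $S_\mu(\mu)=S_0$, and $\SU(1,1)$ acts transitively and effectively with the trivial reductive decomposition; for $t\neq\mu$ the construction of $\widetilde\frakg=\mathfrak{su}(1,1)\oplus\bR$ in \pref{th:isometry-spacelike-1} applies verbatim, and setting $\nu=\mu$ in the expressions for $\frakh$ and $\frakm$ there yields exactly the claimed decomposition, the coefficient $\frac{2\nu-\mu-t}{2\nu}$ collapsing to $\frac{\mu-t}{2\mu}$. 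By \pref{pr:invariant-metric} the invariance of $g_{\lambda\mu\nu}$ forces $G=\SU(1,1)\times\bR$, and the action \eqref{eq:R-action} is transitive and effective.

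Next I would record the paraSasakian data. Repeating the Levi-Civita computation of the previous subsection with $\nu=\mu$ and comparing with \eqref{eq:beta-paraSasaki} gives $(\nabla_X\phitilde_1)Y=\beta\lb g_{\lambda\mu\nu}(X,Y)\xi_1-\eta_1(Y)X\rb$ with $\beta=-\frac{\sqrt{\mu}}{\nu}=-\frac{1}{\sqrt{\mu}}$, so $(\bH^3_1,g_{\lambda\mu\nu},\phitilde_1,\xi_1,\eta_1)$ is the Lorentzian $-\frac{1}{\sqrt{\mu}}$-paraSasakian manifold. It then remains to verify $\widetilde\nabla\phitilde_1=\widetilde\nabla\xi_1=\widetilde\nabla\eta_1=0$ for $\widetilde\nabla=\nabla-S_\mu(t)$; since $\xi_1\propto X_1$ and $S_\mu(t)$ was built so that $(\nabla-S_\mu(t))\xi_1=0$, and the $(X_0,X_2)$-block of $S_\mu(t)$ matches the Levi-Civita rotation intertwined by $\phitilde_1$, these are short direct checks in the orthonormal frame $\{\Xbar_\alpha\}$.

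The main obstacle I anticipate is purely bookkeeping: ensuring that the $\nu=\mu$ substitution is applied consistently across \eqref{eq:Rtilde-spacelike}, the Lie brackets, and the formulas for $\frakh,\frakm$ in \pref{th:isometry-spacelike-1}, and confirming that $\widetilde\nabla\phitilde_1=0$ holds identically rather than merely along the Reeb direction. Everything else is a faithful transcription of the already-established spacelike case, so no new conceptual difficulty is expected.
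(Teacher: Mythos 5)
Your proposal is correct and follows essentially the same route as the paper, which proves this theorem precisely by substituting $-\lambda=\mu=\nu$ into the proof of \pref{th:isometry-spacelike-1} (so that $\frac{2\nu-\mu-t}{2\nu}$ becomes $\frac{\mu-t}{2\mu}$ and the curvature \eqref{eq:Rtilde-spacelike} gives the holonomy dichotomy at $t=\mu$), with the paraSasakian identification $\beta=-\frac{1}{\sqrt{\mu}}$ and the parallelism of $(\phitilde_1,\xi_1,\eta_1)$ handled in the immediately preceding discussion exactly as you describe. The only difference is one of presentation: you spell out the verification of $\widetilde\nabla\phitilde_1=\widetilde\nabla\xi_1=\widetilde\nabla\eta_1=0$ slightly more explicitly than the paper, which simply cites the analogy with \pref{th:homogeneous-str-spacelike}.
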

	
	In this case, $ (S_\mu(t),\phitilde_1,\xi_1,\eta_1) $ is the unique non-trivial homogeneous almost paracontact metric structure.
	
	\section{Homogeneous mixed metric 3-structures on \((\bH^3_1,g_{\lambda\mu\nu})\)}\label{sc:mixed metric 3-structure}
	Mixed metric 3-structures, which were introduced in \cite{ianucs2006real} as the odd-dimensional counterpart of a paraquaternionic structure, associated with the Kaluza-Klein type metric on $\AdS_3 $ have also been studied in \cite{calvaruso2014metrics}.
	
	A mixed 3-structure on a manifold is a triple of structures $ (\phi_l,\xi_l,\eta_l)_{l=0,1,2}, $ consisting of almost contact and paracontact structures satisfying
	\begin{equation}
		\begin{cases}
		& \eta_i(\xi_j) = 0,\\
		& \eta_i\phi_j = \tau_k\eta_k = -\eta_j\phi_i,\\
		& \phi_i(\xi_j) = \tau_j\xi_k, \quad \phi_j(\xi_i) =-\tau_i\xi_k,\\
		& \phi_i\phi_j-\tau_i\eta_j\otimes\xi_i = \tau_k\phi_k = -\phi_j\phi_i + \tau_j\eta_i\otimes\xi_j, 
	\end{cases}
	\end{equation}
	for every cyclic permutation of the indices $i,j,k=0,1,2, $ where $ \tau_l = 1 $ if $ ( \phi_l,\xi_l,\eta_l) $ is almost contact and $ \tau_l=-1 $
	if $ ( \phi_l,\xi_l,\eta_l) $ is almost paracontact. 
	If in addition $ M $ admits a pseudo-Riemannian metric $ g $ such that 
	\begin{equation}\label{eq:mixed metric 3-structure}
		g(\phi_lX,\phi_lY) = \tau_l\lb g(X,Y) - \epsilon_l\eta_l(X)\eta_l(Y) \rb, \ (X,Y\in \Gamma(TM), l=0,1,2)
	\end{equation}
	where $\epsilon_l = g(\xi_l,\xi_l), $ then $ g $ is compatible with a mixed 3-structure $(\phi_l,\xi_l,\eta_l)_{l=0,1,2}. $
	In the special case \pref{th:mixed-Lorentz} where the structure is Sasakian for $ l= 0 $ and paraSasakian for $l=1,2$, the mixed metric 3-structure is called a mixed 3-Sasakian structure.

	A mixed metric 3-structure $(\phi_l,\xi_l,\eta_l)_{l=0,1,2} $ is called homogeneous if there is a homogeneous structure $ S $ of the canonical connection $\widetilde\nabla = \nabla-S $ satisfying the parallel condition of $(\phi_l,\xi_l,\eta_l)_{l=0,1,2}. $
	Obviously, there are no groups of isometries with non-trivial isotropic subgroups that admit homogeneous mixed metric 3-structures, we retrieve the following.
	 \begin{theorem}\cite{calvaruso2014metrics}\label{th:mixed-Lorentz}
	 For an arbitrary Lorentzian metric of Kaluza-Klein type on $\AdS_3, $ there is the following homogeneous mixed metric 3-structures for the canonical connection $\widetilde\nabla = \nabla - S_0 $:
	 \begin{equation}\label{eq:mixed-Lorentz}
	 	(\phi_l,\xi_l,\eta_l)_{i=0,1,2} = 
	   \begin{cases}
	 		(\phi_0,\xi_0,\eta_0) & = \lb \Xbar_1\otimes\thetabar^2-\Xbar_2\otimes\thetabar^1, \Xbar_0,\thetabar^0 \rb \\
	 		(\phi_1,\xi_1,\eta_1) & =  \lb -\Xbar_2\otimes\thetabar^0-\Xbar_0\otimes\thetabar^2, \Xbar_1,\thetabar^1 \rb \\
	 		(\phi_2,\xi_2,\eta_2) & = \lb \Xbar_1\otimes\thetabar^0 + \Xbar_0\otimes\thetabar^1, \Xbar_2,\thetabar^2 \rb 
	 	\end{cases},
	 \end{equation}
	 induced by the isometry group $ \SU(1,1) $ with the trivial reductive decomposition.
	 Moreover, if $-\lambda=\mu=\nu=1, $ the mixed metric 3-structure is a mixed 3-Sasakian structure with the isometry group $ \SU(1,1) $.  
	 \end{theorem}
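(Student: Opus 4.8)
The plan is to assemble the statement from three ingredients that are already in place: the pointwise algebra of the triple, the parallelism of all three structures with respect to the single connection $\widetilde\nabla=\nabla-S_0$, and the identification of the isometry data.

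First I would record that the three triples in \eqref{eq:mixed-Lorentz} are exactly the compatible (para)contact structures already analysed: $(\phi_0,\xi_0,\eta_0)$ is the almost contact structure \eqref{eq:AC-1}, compatible with $g_{\lambda\mu\nu}$ for every Lorentzian Kaluza-Klein metric by \pref{lm:compatible-condition} (here $\mu,\nu>0$), while $(\phitilde_1,\xi_1,\eta_1)$ and $(\phitilde_2,\xi_2,\eta_2)$ are the almost paracontact structures \eqref{eq:ApC-2}, \eqref{eq:ApC-3}, compatible with $g_{\lambda\mu\nu}$ since $\lambda<0$. To confirm they form a mixed $3$-structure I would verify the defining relations directly in the orthonormal frame $\{\Xbar_\alpha\}$: setting $\tau_0=1$, $\tau_1=\tau_2=-1$, the relations $\eta_i(\xi_j)=\delta_{ij}$, $\phi_i(\xi_j)=\tau_j\xi_k$, $\eta_i\phi_j=\tau_k\eta_k$ and $\phi_i\phi_j-\tau_i\eta_j\otimes\xi_i=\tau_k\phi_k$ reduce, for each cyclic $(i,j,k)$, to a short computation with the rank-one operators $\Xbar_\alpha\otimes\thetabar^\beta$. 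The metric compatibility \eqref{eq:mixed metric 3-structure} is then exactly \eqref{eq:almost-contact-metric} for $l=0$ (with $\epsilon_0=g(\Xbar_0,\Xbar_0)=-1$) and \eqref{eq:almost-paracontact-metric} for $l=1,2$ (with $\epsilon_l=g(\Xbar_l,\Xbar_l)=1$, so $\tau_l=-1$ absorbs the sign).

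Second, the crucial point is homogeneity. As noted after \eqref{eq:homogeneous-str-different}, the canonical connection $\widetilde\nabla=\nabla-S_0$ has trivial connection form in the left-invariant frame $\{X_\alpha\}$, i.e.\ $\widetilde\nabla_{X_\alpha}X_\beta=0$ for all $\alpha,\beta$, and hence $\widetilde\nabla_{X_\alpha}\theta^\beta=0$ as well. Since $\Xbar_\alpha$ and $\thetabar^\alpha$ are constant rescalings of $X_\alpha$ and $\theta^\alpha$, each of $\phi_l,\xi_l,\eta_l$ has constant coefficients in this $\widetilde\nabla$-parallel frame; therefore $\widetilde\nabla\phi_l=\widetilde\nabla\xi_l=\widetilde\nabla\eta_l=0$ for $l=0,1,2$ all hold simultaneously for the one connection $\widetilde\nabla=\nabla-S_0$. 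This is precisely what makes $S_0$ a homogeneous mixed metric $3$-structure. The isometry data are inherited by the same argument as in \pref{th:homogeneous-structure-different}: the curvature of $\widetilde\nabla$ vanishes, its holonomy algebra is trivial, so the reductive decomposition is $\frakm=\mathfrak{su}(1,1)$, $\frakh=\{0\}$, and $\SU(1,1)$ acts transitively and effectively on $\bH^3_1$.

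Finally, for $-\lambda=\mu=\nu=1$ I would specialise the $\alpha$- and $\beta$-coefficients already computed. By \pref{th:timelike-homogeneous-almost-contact} the contact triple is $\alpha$-Sasakian with $\alpha=\sqrt{|\lambda|}/\mu=1$, hence Sasakian; by \pref{th:spacelike-homogeneous-almost-paracontact} and the analogous computation in \pref{sc:space2} the two paracontact triples are $\beta$-paraSasakian with $\beta=-\sqrt{\mu}/\nu=-1$ and $\beta=-\sqrt{\nu}/\mu=-1$ respectively, hence paraSasakian. Thus the mixed metric $3$-structure is mixed $3$-Sasakian, still with isometry group $\SU(1,1)$. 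The only genuine subtlety, and the step I would treat most carefully, is that a single homogeneous structure $S_0$ must parallelise all three triples at once; this causes no conflict precisely because the vanishing of the connection form of $\widetilde\nabla$ is a frame-wide statement, not one tailored to any individual $(\phi_l,\xi_l,\eta_l)$, so the parallelism of each structure is automatic once the frame itself is parallel.
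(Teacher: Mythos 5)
Your proposal is correct and follows essentially the same route the paper takes (and leaves largely implicit, since the theorem is quoted from \cite{calvaruso2014metrics}): the triples are the already-verified compatible left-invariant (para)contact structures, the mixed $3$-structure identities are a pointwise check in the orthonormal frame, and parallelism of all three triples under the single connection $\widetilde\nabla=\nabla-S_0$ follows because that connection has vanishing connection form in the left-invariant frame, whence trivial holonomy, trivial reductive decomposition, and the transitive effective $\SU(1,1)$ action of \pref{th:homogeneous-structure-different}. Your specialisation $\alpha=\sqrt{|\lambda|}/\mu=1$ and $\beta=-\sqrt{\mu}/\nu=-\sqrt{\nu}/\mu=-1$ at $-\lambda=\mu=\nu=1$ correctly yields the mixed $3$-Sasakian claim.
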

	 
	 \begin{remark}
	 	If $ -\lambda=\mu=\nu, $ then the mixed 3-Sasakian structure satisfies $ \phi_0 = \frac{1}{2}U_0, \phi_1 = \frac{1}{2}U_1 $ and $ \phi_2 = \frac{1}{2}U_2. $
	 \end{remark}
	 
	 	 \begin{theorem}\cite{calvaruso2014metrics}\label{th;mixed-Riemann}
	 	For an arbitrary Riemannian metric of Kaluza-Klein type on $\AdS_3, $ there is the following homogeneous mixed metric 3-structures for the canonical connection $\widetilde\nabla = \nabla - S_0 $:
	 	\begin{equation}\label{eq:mixed-Riemann}
	 		(\phi_l,\xi_l,\eta_l)_{i=0,1,2} = 
	 		\begin{cases}
	 			(\phi_0,\xi_0,\eta_0) & = \lb -\Xbar_1\otimes \thetabar^2 + \Xbar_2\otimes\thetabar^1,  \Xbar_0,  \thetabar^0\rb\\
	 			(\phi_1,\xi_1,\eta_1) & = \lb \Xbar_0\otimes \thetabar^2 - \Xbar_2\otimes\thetabar^0,  \Xbar_1, \thetabar^1\rb\\
	 			(\phi_2,\xi_2,\eta_2) & = \lb \Xbar_1\otimes \thetabar^0 - \Xbar_0\otimes\thetabar^1,  \Xbar_2, \thetabar^2\rb
	 		\end{cases},
	 	\end{equation}

	 	induced by the isometry group $ \SU(1,1) $ with the trivial reductive decomposition.
	 	Moreover, these mixed metric 3-structures are never 3-Sasakian. 
	 \end{theorem}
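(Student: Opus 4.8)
The plan is to separate the statement into three essentially independent assertions — the $\widetilde\nabla$-parallelism of the triple, its pointwise mixed metric $3$-structure identities, and the failure to be $3$-Sasakian — and to read off the isometry data from $S_0$ at the end. Throughout I work in the orthonormal frame $\{\Xbar_0,\Xbar_1,\Xbar_2\}$ with dual $\{\thetabar^0,\thetabar^1,\thetabar^2\}$, in which the three structures in \eqref{eq:mixed-Riemann} have constant coefficients.

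First, for the homogeneity I would invoke the observation made after \eqref{eq:homogeneous-str-different} that $\widetilde\nabla = \nabla - S_0$ has vanishing connection form in the left-invariant frame $\{X_\alpha\}_{\alpha=0}^2$; a one-line check against \eqref{eq:connection-form-general} confirms $\widetilde\nabla_{X_i}X_j = 0$ for all $i,j$ and arbitrary $\lambda,\mu,\nu$. Consequently $\widetilde\nabla\Xbar_\alpha = \widetilde\nabla\thetabar^\alpha = 0$, and since every $\phi_l,\xi_l,\eta_l$ is a constant combination of these, the parallelism $\widetilde\nabla\phi_l = \widetilde\nabla\xi_l = \widetilde\nabla\eta_l = 0$ holds for $l=0,1,2$. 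This is the crucial simplification: $S_0$ is designed to trivialise $\widetilde\nabla$ on left-invariant tensors. The isometry group and trivial reductive decomposition $\frakm = \mathfrak{su}(1,1)$, $\frakh = \{0\}$ then follow as in \pref{th:homogeneous-structure-different}: the curvature $\widetilde R$ vanishes, so the holonomy and hence the isotropy algebra is zero, while $\SU(1,1)$ acts by left translation and manifestly preserves each left-invariant $\phi_l,\xi_l,\eta_l$.

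Next I would verify the algebraic axioms pointwise. As $g_{\lambda\mu\nu}$ is positive definite here, each $\xi_l$ is spacelike, $\epsilon_l = 1$, and a direct computation gives $\phi_l^2 = -\id + \eta_l\otimes\xi_l$, so all three structures are almost contact ($\tau_l = 1$); the compatibility \eqref{eq:mixed metric 3-structure} then reduces to $g(\phi_lX,\phi_lY) = g(X,Y) - \eta_l(X)\eta_l(Y)$, which is immediate because $\phi_l$ restricts to a metric-preserving almost complex structure on $\xi_l^\perp$. The remaining relations $\eta_i(\xi_j) = 0$, $\phi_i\xi_j = \tau_j\xi_k$, $\eta_i\phi_j = \tau_k\eta_k$, and $\phi_i\phi_j - \tau_i\eta_j\otimes\xi_i = \tau_k\phi_k$ are checked on $\{X_0,X_1,X_2\}$ using the cyclic data $\phi_0\xi_1 = \xi_2$, $\phi_1\xi_2 = \xi_0$, $\phi_2\xi_0 = \xi_1$ (equivalently $\phi_0 X_1 = \tfrac{\sqrt\mu}{\sqrt\nu}X_2$ and its cyclic companions), the normalising square roots cancelling in each product.

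The substantive point, and the step I expect to be the main obstacle, is that the structure is never $3$-Sasakian. My plan is to show that already one of the three almost contact structures, say $(\phi_1,\xi_1,\eta_1)$ with $\xi_1 = \Xbar_1$, fails to be $\alpha$-Sasakian for any $\alpha$. Using the Levi-Civita forms \eqref{eq:connection-form-general} I would evaluate $(\nabla_X\phi_1)Y$ on the pairs $X=Y=X_0$ and $X=Y=X_2$ and compare with \eqref{eq:alpha-Sasaki} (with $\epsilon_1 = 1$); since $\eta_1(X_0) = \eta_1(X_2) = 0$ the two comparisons force
\[
	\alpha_1 = \frac{\lambda+\mu+\nu}{\sqrt{\lambda\mu\nu}} \quad\text{and}\quad \alpha_1 = -\frac{\lambda-\mu+\nu}{\sqrt{\lambda\mu\nu}},
\]
whose equality would require $\lambda+\nu = 0$ — impossible for the Riemannian range $\lambda,\mu,\nu>0$. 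Hence $(\phi_1,\xi_1,\eta_1)$ is not $\alpha$-Sasakian, a fortiori not Sasakian, so the triple cannot be $3$-Sasakian. The only genuine computation is thus $(\nabla\phi_1)$ on two test pairs; it also explains the contrast with the Lorentzian situation, where precisely $\lambda+\nu=0$ (i.e. $-\lambda=\nu$) is admissible and turns this structure paracontact and $\beta$-paraSasakian, as in \pref{th:spacelike-homogeneous-almost-paracontact}.
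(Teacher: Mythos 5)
Your proposal is correct, and it is worth noting that the paper itself supplies essentially no proof here: \pref{th;mixed-Riemann} is imported from \cite{calvaruso2014metrics}, with the only in-text justification being the remark preceding \pref{th:mixed-Lorentz} together with the observation (after \eqref{eq:homogeneous-str-different}) that $\widetilde\nabla=\nabla-S_0$ has vanishing connection form and curvature in the left-invariant frame. Your reconstruction fills this in consistently with the paper's own computations: the parallelism of every left-invariant tensor under $\nabla-S_0$ and the resulting trivial holonomy/isotropy are exactly the mechanism used in \pref{th:trivial-HACM} and \pref{th:trivial-HApCM}, and your pointwise verification of the structure identities (all three $\phi_l$ squaring to $-\id+\eta_l\otimes\xi_l$, so $\tau_l=1$ throughout in the Riemannian range --- forced anyway, since no almost paracontact structure is compatible with a positive-definite metric) is routine and correct. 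The one place where you do genuine work the paper omits is the non-$3$-Sasakian claim, and your computation checks out: with the connection forms \eqref{eq:connection-form-general} one gets $(\nabla_{X_0}\phi_1)X_0=\tfrac{\sqrt{\lambda}(\lambda+\mu+\nu)}{\mu\sqrt{\nu}}X_1$ and $(\nabla_{X_2}\phi_1)X_2=-\tfrac{\sqrt{\nu}(\lambda-\mu+\nu)}{\mu\sqrt{\lambda}}X_1$, yielding the two incompatible values of $\alpha$ you state, whose equality would force $\lambda+\nu=0$; this is stronger than needed (ruling out $\alpha$-Sasakian for every $\alpha$ rather than just $\alpha\equiv 1$), but that is harmless and mirrors the $\alpha$-Sasakian computations of \pref{sc:HACM}. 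The only caveat is terminological: with all $\tau_l=1$ the triple is a contact (hypercontact) $3$-structure rather than a ``mixed'' one in the strict sense of \cite{ianucs2006real}, but that looseness is in the statement being proved, not in your argument.
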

	 

	\section*{Acknowledgement}
	I would like to thank Satsuki Matsuno for helpful discussions and comments.

	\appendix
	\renewcommand{\thetheorem}{\Alph{section}.\arabic{theorem}}
	\renewcommand{\theequation}{\Alph{section}.\arabic{equation}}
	
	\section{Deformation of the metrics and the Hopf fibrations}\label{sc:Hopf-fibration}
	We call any principal $ \bS^1 $- or $ \bR $-bundle over $ \AdS_3 $ whose fibers are the orbits of left (resp. right)-invariant vector fields, which are Killing vector fields of the right (resp. left) action, a Hopf fibration.
	In this section, we shall see that Hopf fibrations, which are associated with each homogeneous structure, on $\AdS_3 $ provide examaples of two types of Killing submersions by left-invariant vector fields. 
	There are three kinds of Killing vector fields on a Lorentzian manifold $ (\bH^3_1, g_{\lambda\mu\nu}) $ generated by invariant vector fields. The timelike and spacelike left-invariant vectors can be regarded as deforming the metric $ g_{\lambda\mu\nu} $ away from the metric that is homothetic to the standard $ \AdS_3 $ (i.e.,  when $ -\lambda=\mu=\nu). $ These are classified by the Hopf-fibration in the following \pref{tb:deform-Hopf-fibration}.
	
		\begin{table}[ht]
		\caption{The deformation of the metric homothetic to the standard $\AdS_3 $ along the fiber of the Hopf fibration and associated homogeneous pseudo-Riemannian structures (HS)}
		\label{tb:deform-Hopf-fibration}
		\centering
		\begin{tabular}{|c|c|c|c|}
			\hline
			Hopf-fibration & fiber direction & metric & HS \\
			\hline
			$ \bS^1\hookrightarrow \bH^3_1 \to \bH^2(4) $ & $ X_0 $ & $ \diag(\lambda,\mu,\mu) $ & $ S_\lambda(t) $  \\
			\hline
			$ \bR \hookrightarrow \bH^3_1 \to \bH^2_1(4) $ & $ X_1 $ & $ \diag(-\nu,\mu,\nu) $ & $ S_\mu(t)(\simeq S_\nu(t)) $  \\
			\hline
			$  \bR_+\hookrightarrow\bH^3_1 \to \bS^1\times \bR_+ $ & $ X_\mp $ & $ \diag(-\mu,\mu,\mu) $ & $ S^{\mp}_{\text{null}}(t) $ \\
			\hline
		\end{tabular}
	\end{table}

	\begin{definition}
		Let  $ (M,g) $ and $ (B,h) $ be a connected and orientable pseudo-Riemannian manifolds.
		A submersion $ \pi \colon M \to B $ is called a pseudo-Riemannian submersion if it satisfies the following conditions:
		\begin{enumerate}
			\item For each $ b \in B, $ the fiber $ \pi^{-1}(b) $ is a pseudo-Riemannian submanifold of $ (M,g). $
			\item There exists a horizontal distribution $ \scH $ on $ M $ such that for every $ p \in M $ the tangent space decomposes as $ T_pM = \scH_p\oplus \scV_p, $ where $ \scV \coloneqq \ker{\pi_*} $ is called the vartical distribution on $ M. $
			\item The restriction $ \pi_*|_{\scH_p} \colon (\scH_p, \pi^*h_{\pi(p)}|_{\scH_p}) \to (T_{\pi(p)}B, h_{\pi(p)}) $ is a linear isometry for an arbitrary $ p \in M $.  
		\end{enumerate}
	\end{definition}

	\begin{definition}
		A pseudo-Riemannian submersion $ \pi \colon M \to B $ is called a Killing submersion if it admits complete vartical unit Killing vector fields.
	\end{definition}

	Using timelike and spacelike Hopf coordinates is useful to discribe Killing submersions whose fibers are generated by the timelike and spacelike Killing under the action.
	\begin{enumerate}
		\item We recall that the timelike Hopf-fibration, which is described in \pref{sc:Metric of Kaluza-Klein type}, $ \bS^1\hookrightarrow \bH^3_1 \to \bH^2(4) $ along $ X_0 $ direction of the fibers is given as follows:
		\begin{equation}
			\pi_0\colon \bH^3_1 \ni (z^1,z^2) \mapsto \frac{1}{2}(|z^1|^2+|z^2|^2, -2\sqrt{-1}z^1z^2) \in \bH^2(4).
		\end{equation}
		The map $ \pi_0 $ can be seen as a pseudo-Riemannian submersion in the following context. It is easy to follow that $ X_0 $ preserves $ |z^1|^2+|z^2|^2, $ and $ z^1z^2. $ Moreover, $ \pi_0^*h_0 = \iota^*g_0 $ holds from \eqref{eq:standard-AdS3-timelike-Hopf-coordinates} and \eqref{eq:standard-H2-coordinates}. The Killing vector fields $ \{\iota_*e_\alpha = X_\alpha\}_{\alpha=0}^2 $ expressed in timelike Hopf coordinates \eqref{eq:timelike-Hopf coordinate1} and \eqref{eq:timelike-Hopf coordinate2} are given in \eqref{eq:e0}, \eqref{eq:e1} and \eqref{eq:e2}.
		Thier pushforwards under $\pi_0 $ are
		\begin{align}
			{\pi_0}_*e_0 & = 0,\\
			{\pi_0}_*e_1 & = 2\lb \frac{\cos\tau}{\sinh\chi} \frac{\partial}{\partial \varphi} - \sin\tau \frac{\partial}{\partial \chi} \rb,\\
			{\pi_0}_*e_2 & = -2 \lb \frac{\sin\tau}{\sinh\chi} \frac{\partial}{\partial \varphi} +  \cos\tau \frac{\partial}{\partial \chi}\rb. 
		\end{align}
		In other words, the decomposition of the tangent bundle $ T\bH^3_1 $ associated with the pseudo-Riemannian submersion $ \pi_0 $ determines the horizontal lifts of the orthonormal frame
		\begin{align}
			\lb 2\frac{\partial}{\partial \chi}\rb^h & =  2\frac{\partial}{\partial \chi},\\
			\lb \frac{2}{\sinh \chi}\frac{\partial}{\partial \varphi}\rb^h & =  \frac{2}{\sinh \chi}\lb \frac{\partial}{\partial \varphi} - \cosh\chi\frac{\partial}{\partial \tau} \rb.
		\end{align}
		Since the vartical subspace $ \scV_p = \vspan_\bR\{X_0|_p\} $ for each $ p\in \bH^3_1 $ of this pseudo-Riemannian submersion is generated by the unit timelike Killing, $ \pi_0 $ is a Killing submersion.
		\item The spacelike Hopf-fibration $ \bR \hookrightarrow \bH^3_1 \to \bH^2_1(4) $ along $ X_1 $ direction of the fibers is given as follows:
		\begin{equation}\label{eq:spacelike-Riemannian-submersion}
			\pi_1 \colon \bH^3_1 \ni (z^1, z^2) \mapsto \frac{1}{2}( 2\Im(\zbar^1z^2), (z^1)^2-(z^2)^2 ) \in \bH^2_1(4).
		\end{equation}
		Here, we define $ \bH^2_1(\kappa) \coloneqq \lc (t,y^1,y^2) \in \bR^3 \mid -t^2 + |y|^2 = \frac{1}{\kappa}(\kappa>0) \rc, $ which is known as the two-dimensional anti-de Sitter spacetime ($\AdS_2 $).
		The map $ \pi_1 $ can be seen as a pseudo-Riemannian submersion in the following context. It is easy to check that $ X_1 $ preserves $ \Im(\zbar^1z^2), $ and $ (z^1)^2-(z^2)^2. $ Consequently, the base space is homothetic to $ \AdS_2 $ with the metric $ \diag(-\nu,\nu) $ as seen via the pushfowards $ {\pi_1}_*X_0 $ and $ {\pi_1}_*X_2. $
		We now introduce the following spacelike Hopf coordinates:
		\begin{align}
			t^1 & = \cosh\frac{\chi}{2}\cosh\frac{\varphi}{2}\cos\frac{\tau}{2} - \sinh\frac{\chi}{2}\sinh\frac{\varphi}{2}\sin\frac{\tau}{2},\\
			t^2 & = \cosh\frac{\chi}{2}\cosh\frac{\varphi}{2}\sin\frac{\tau}{2} + \sinh\frac{\chi}{2}\sinh\frac{\varphi}{2}\cos\frac{\tau}{2},\\
			x^1 & = \cosh\frac{\chi}{2}\sinh\frac{\varphi}{2}\cos\frac{\tau}{2} - \sinh\frac{\chi}{2}\cosh\frac{\varphi}{2}\sin\frac{\tau}{2},\\
			x^2 & = \cosh\frac{\chi}{2}\sinh\frac{\varphi}{2}\sin\frac{\tau}{2} + \sinh\frac{\chi}{2}\cosh\frac{\varphi}{2}\cos\frac{\tau}{2}.
		\end{align}
		Then complex coordinates are defined by $ (z^1,z^2) = (t^1+\sqrt{-1}t^2, x^1+\sqrt{-1}x^2). $
		In these coordinates, the pseudo-Riemannian submersion $ \pi_1 $ takes the form in \eqref{eq:spacelike-Riemannian-submersion}
		\begin{equation}
			\pi_1\colon \bH^3_1 \ni (z^1,z^2) \mapsto \frac{1}{2}(\sinh\chi, \cosh\chi\cos\tau,\cosh\chi\sin\tau) \in \bH^2_1(4).
		\end{equation}
		The induced metric $ h_1 $ on $ \bH^2_1(4) $ is given by
		\begin{equation}
			h_1 = \frac{1}{4}\lb -\cosh^2\chi d\tau^2 + d\chi^2 \rb.
		\end{equation} 
		Thus, the pullback metric \begin{equation}
			 \pi_1^*h_1 = \frac{1}{4}\lc -\cosh^2\chi d\tau^2 + (d\varphi - \sinh\chi d\tau)^2 + d \chi^2 \rc
		\end{equation} 
		 coincides with the standard metric on $ \bH^3_1 $ expressed in spacelike Hopf coordinates.
		
		Next, we derive the local expressions of the Killing vector fields $ \{\iota_*e_\alpha = X_\alpha\}_{\alpha=0}^2 $ corresponding to the embedding $ \iota \colon (\tau,\varphi,\chi) \mapsto  \frac{1}{2}(\sinh\chi, \cosh\chi\cos\tau,\cosh\chi\sin\tau). $
		They are given by
		\begin{align}
			e_0 & = 2\frac{\cosh\varphi}{\cosh\chi}\lb \frac{\partial}{\partial \tau} + \sinh\chi \frac{\partial}{\partial \varphi} \rb - 2 \sinh\varphi \frac{\partial}{\partial 	\chi},\\
			e_1 & = 2\frac{\partial}{\partial \varphi},\\
			e_2 & = -2\frac{\sinh\varphi}{\cosh\chi}\lb \frac{\partial}{\partial \tau} + \sinh\chi \frac{\partial}{\partial \varphi} \rb + 2 \cosh\varphi \frac{\partial}{\partial \chi}.
		\end{align}
		Their pushfowards under $ \pi_1 $ are 
		\begin{align}
			 {\pi_1}_*e_0 & = 2\lb \frac{\cosh\varphi}{\cosh\chi} \frac{\partial}{\partial \tau} -  \sinh\varphi \frac{\partial}{\partial \chi}\rb,\\
			 {\pi_1}_*e_1 & = 0,\\
			 {\pi_1}_*e_2 & = 2\lb -\frac{\sinh\varphi}{\cosh\chi} \frac{\partial}{\partial \tau} +  \cosh\varphi \frac{\partial}{\partial \chi}\rb. 
		\end{align}
		In other words, the decomposition of the tangent bundle $ T\bH^3_1 $ associated with the pseudo-Riemannian submersion $ \pi_1 $ determines the horizontal lifts of the orthonormal frame
		\begin{align}
			\lb \frac{2}{\cosh\chi}\frac{\partial}{\partial \tau}\rb^h & =  \frac{2}{\cosh\chi} \lb \frac{\partial}{\partial \tau} + \sinh\chi\frac{\partial}{\partial \varphi} \rb,\\
			\lb 2\frac{\partial}{\partial \chi}\rb^h & = 2\frac{\partial}{\partial \chi}.
		\end{align}
		Since the vartical subspace $ \scV_p = \vspan_\bR\{X_1|_p\} $ for each $ p \in \bH^3_1 $ of this pseudo-Riemannian submersion is generated by the unit spacelike Killing, $ \pi_1 $ is a Killing submersion.
		
		\item The lightlike Hopf fibration $ \bR_+ \hookrightarrow \bH^3_1 \to \bS^1\times \bR_+ $ along $ X_+ = \frac{1}{\sqrt{2}}(X_0 + X_1) $ direction of the fiber is given as follows:
		\begin{equation}
			\pi_+ \colon \bH^3_1 \ni (z^1, z^2) \mapsto z^1-\sqrt{-1}z^2 \in \bS^1\times \bR_+.
		\end{equation}
		$ \pi_+ $ is not a pseudo-Riemannian submersion since $ \pi_+^{-1}(p) $ is not a pseudo-Riemannian submanifold for any $ p \in M $ but $ X_+ $ preserves $ z^1-\sqrt{-1}z^2 $ then $ X_+ \in \ker({\pi_+}_*). $
		Although it does not deform the metric $ g_{\lambda\mu\nu} = \diag(-\mu,\mu,\mu) $ on $ \bH^3_1 $ due to \pref{pr:invariant-metric}, there are associated homogeneous Lorentzian structures $ S^\mp_{\text{null}}(t) $ on $ (\bH^3_1,g_{\lambda\mu\nu}) $ when $ -\lambda=\mu = \nu. $
	\end{enumerate}
	

	\bibliographystyle{amsalpha}
	\bibliography{bibs}
	\nocite{coussaert1994selfdualsolutions21einstein}
	\nocite{del2024duality}
	\nocite{o1983semi}
	\nocite{blair2010riemannian}
	\nocite{welyczko2009legendre}
	\nocite{calvaruso2015killing}
	\nocite{milnor1976curvatures}
	\nocite{inoguchi2024homogeneous}
	
\end{document}